\renewcommand{\le}{\leqslant}
\renewcommand{\ge}{\geqslant}
\renewcommand{\Re}{\mathrm{Re}}
\newcommand{\dist}{\mathbf{dist}}
\newcommand{\diam}{\mathrm{diam}\,}
\newcommand{\covol}{\mathrm{covol}}
\newcommand{\Span}{\mathrm{span}}
\newcommand{\RR}{\mathbb{R}}
\newcommand{\ZZ}{\mathbb{Z}}
\newcommand{\QQ}{\mathbb{Q}}
\newcommand{\NN}{\mathbb{N}}
\newcommand{\CC}{\mathbb{C}}
\newcommand{\LLL}{\mathcal{L}}
\newcommand{\VVV}{\mathcal{V}}
\newcommand{\CCC}{\mathcal{C}}
\newcommand{\SSS}{\mathcal{S}}
\newcommand{\HHH}{\mathcal{H}}
\newcommand{\PPP}{\mathcal{P}}
\newcommand{\MMM}{\mathcal{M}}
\newcommand{\vx}{\mathbf{x}}
\newcommand{\vu}{\mathbf{u}}
\newcommand{\vv}{\mathbf{v}}
\newcommand{\vp}{\mathbf{p}}
\newcommand{\vq}{\mathbf{q}}
\newcommand{\vb}{\mathbf{b}}
\newcommand{\va}{\mathbf{a}}
\newcommand{\vf}{\mathbf{f}}
\newcommand{\vkappa}{\boldsymbol{\kappa}}
\newtheorem{lemma}{Lemma}
\newtheorem{theorem}{Theorem}
\newtheorem{proposition}{Proposition}
\newtheorem{corollary}{Corollary}
\newtheorem{theorembb}{Theorem BB}
\newenvironment{proof}{\textsc{Proof. }}{\ \newline\hspace*{\fill}$\boxtimes$}
\newcommand{\vxi}{\boldsymbol{\xi}}
\newcommand{\vdelta}{\boldsymbol{\delta}}
\newcommand{\vepsilon}{\boldsymbol{\epsilon}}
\newcommand{\SL}{\mathrm{SL}}
\newcommand{\vol}{\mathrm{Vol}}
\newcommand{\codim}{\mathrm{codim}\;}
\newcommand{\res}{\mathrm{Res}}
\begin{document}
\title{Simultaneous Diophantine approximation to points on the Veronese curve}

\author{Dmitry Badziahin}


\maketitle

\begin{abstract}
We compute the Hausdorff dimension of the set of simultaneously
$q^{-\lambda}$-well approximable points on the Veronese curve in $\RR^n$
for $\lambda$ between $\frac{1}{n}$ and $\frac{2}{2n-1}$. For $n=3$, the
same result is given for a wider range of $\lambda$ between $\frac13$ and
$\frac12$. We also provide a nontrivial upper bound for this Hausdorff
dimension in the case $\lambda\le \frac{2}{n}$. In the course of the proof
we establish that the number of cubic polynomials of height at most $H$ and
non-zero discriminant at most $D$ is bounded from above by $c(\epsilon)
H^{2/3 + \epsilon} D^{5/6}$.
\end{abstract}

{\footnotesize{{\em Keywords}: Diophantine exponents, simultaneously well
approximable points, Veronese curve, Hausdorff dimension, simultaneous
Diophantine approximation on manifolds, cubic polynomials of bounded
discriminant

Math Subject Classification 2020: 11J13, 11J54, 11J82, 11K55}}

\section{Introduction}

For a positive real number $\lambda$ the set $S_n(\lambda)$ of
$q^{-\lambda}$-well approximable points in $\RR^n$ is defined as follows:
$$
S_n(\lambda):= \{\vx\in\RR^n: ||q\vx - \vp||_\infty < q^{-\lambda}\;\mbox{ for i.m. } (q,\vp)\in \ZZ^{n+1}\}.
$$
One of the topical problems in the metric theory of Diophantine approximation
is to understand the structure of the intersection of $S_n(\lambda)$ with a
suitable manifold $\MMM$, see for example~\cite{kle_mar_1998,
beresnevich_2012, ber_yan_2023} where this problem is investigated. In the
landmark paper~\cite{kle_mar_1998} Kleinbock and Margulis established that
for all so-called nondegenerate manifolds $\MMM$ and all $\lambda>1/n$, the
set $S_n(\lambda)\cap \MMM$ has zero Lebesgue measure. But regarding its
Hausdorff dimension, much less is known. Beresnevich~\cite{beresnevich_2012}
showed that for $\lambda$ close enough to $1/n$ and nondegenerate $\MMM$,
$$
\dim (S_n(\lambda)\cap\MMM) \ge \frac{n+1}{\lambda+1} - \codim\MMM.
$$
For the case when $\MMM$ is a nondegenerate curve, the above
inequality is attained for $\frac{1}{n}\le\lambda\le
\frac{3}{2n-1}$. Later, Beresnevich and Yang~\cite{ber_yan_2023}
proved that this bound is sharp for $\lambda$ in close proximity of
$1/n$, much closer than $\frac{3}{2n-1}$. The last improvement of
their bound on $\lambda$ can be found in \cite{sst_2024}. With
respect to curves, it states as follows. Suppose that $\CCC$ is
parametrised by a $n$ times continuously differentiable function
$\vf: J\to \RR^n$ such that its derivatives up to degree $n$ at any
$x\in J$ span $\RR^n$. Then
\begin{equation}\label{sst}
\dim (S_n(\lambda)\cap\CCC) = \frac{2-(n-1)\lambda}{1+\lambda}\qquad
\forall\; \lambda \in \left[\frac1n,
\frac{1}{n}+\frac{n+1}{n(2n-1)(n^2+n+1)}\right).
\end{equation}

For larger values of $\lambda$ the structure of the set
$S_n(\lambda)\cap \MMM$ is mostly unknown. However we know for sure
that for large enough $\lambda$ it substantially depends on a
manifold $\MMM$. For example, consider the circle $\CCC:=
\{\vx\in\RR^2: x_1^2+x_2^2=3\}$. It is not too difficult to verify
that for $\lambda>1$ and $\vx\in \CCC$, the inequality $||q\vx
-\vp||_\infty < q^{-\lambda}$, $(q,\vp)\in\ZZ^3\setminus \mathbf{0}$
for large $q$ implies that $\vp/q\in\CCC$. Then we immediately
deduce that $S_2(\lambda)\cap \CCC = \emptyset$. On the other hand,
for the Veronese curve $\VVV_n:=\{(x,x^2,\ldots, x^n): x\in\RR\}$
and all $\lambda>1$ Schleischitz~\cite{schleischitz_2016} showed
that $\dim (S_n(\lambda)\cap \VVV_n) = \frac{2}{n(1+\lambda)}$, in
sharp contrast with the previous example.

In this paper, we investigate the Hausdorff dimension of
$S_n(\lambda)\cap \VVV_n$. In the literature, this set is often
considered from a different perspective. For $x\in\RR$, by $n$'th
simultaneous Diophantine exponent $\lambda_n(x)$ we define the
supremum of all $\lambda$ such that the inequality
$$
\max_{1\le i\le n} ||qx^i|| \le q^{-\lambda}
$$
has infinitely many solutions $q\in\ZZ$. Here and throughout the
paper, by $||x||$ we mean the distance from $x\in \RR$ to the
nearest integer. We refer to \cite{bugeaud_2014} for an overview of
the known results about the exponents $\lambda_n(x)$. One can easily
see that the set of $x\in\RR$ with $\lambda_n(x)\ge \lambda$
coincides with the projection of $S_n(\lambda)\cap \VVV_n$ to the
first coordinate axis. Therefore the Hausdorff dimensions of
$S_n(\lambda)\cap \VVV_n$ and $\{x\in \RR: \lambda_n(x)\ge
\lambda\}$ coincide. In this paper, we will often work with the
latter set. In particular, for an open interval $I\subset \RR$ we
define
$$
S_n(I,\lambda):= \{x\in I: \lambda_n(x)> \lambda\}.
$$

One can define analogues of $S_n(I,\lambda)$ for general curves $\CCC$. Let
$\CCC$ be two times continuously differentiable. By permuting the coordinates
in $\CCC$ if needed, locally it can be parametrised as $\vx =
\vf(x):=(x,f_2(x),\ldots, f_n(x))$, $x\in J\subset \RR$ where $f_i(x)\in
C^2(J)$. For the moment, we only put the additional condition that
$f_i(x)\neq 0$ for all $1\le i\le n$ and almost all $x\in J$ in terms of the
Lebesgue measure. Later, we will impose more conditions on $\vf$. For $x\in
J$, we define the Diophantine exponent $\lambda^\vf (x)$ as the supremum of
all $\lambda$ such that the inequality $||q\vf(x)-\vp||_\infty \le
q^{-\lambda}$ has infinitely many solutions $(q,\vp)\in \ZZ^{n+1}$. Next, for
$I\subset J$ we define
$$
S_n^\vf (I,\lambda):= \{x\in I: \lambda^\vf (x)> \lambda\}.
$$
Then, as for $S_n(I,\lambda)$, the set $S_n^\vf(I,\lambda)$ is the projection
of $S_n(\lambda)\cap \CCC_I$ to the first coordinate axis, where $\CCC_I$ is
the curve $\CCC$ restricted to the domain $I$. Hence we have $\dim
S_n^\vf(I,\lambda) = \dim(S_n(\lambda)\cap \CCC_I)$.

In the paper we use the Vinogradov symbol: for positive values $A$
and $B$, $A\gg B$ means that $A\ge cB$ where $c>0$ is a constant.
Throughout the paper, $c$ may depend on $\vf$ and $I$ but does not
depend on $x\in I$. In a similar way we define $A\ll B$ and $A\asymp
B$, the latter means that $A\ll B$ and $A\gg B$ at the same time.

By fixing an appropriate subinterval $I\subset J$, without loss of generality
we assume
\begin{equation}
\max_{x\in I} \{||\vf(x)||_\infty, ||\vf'(x)||_\infty,
||\vf''(x)||_\infty\} \ll 1.
\end{equation}
Also without loss of generality, we may assume that $I$ is separated
from zeroes of $f_i$, i.e. for all $x\in I$, $0\le i\le n$,
$|f_i(x)|\gg1$. In that case, for any integer point
$(q,\vp)\in\ZZ^{n+1}$ with $q$ large enough such that $||q\vf(x) -
\vp||_\infty \le q^{-\lambda}$, we have $|q|\gg ||(q,\vp)||_\infty$.
Notice that in the case of the Veronese curve this condition means
that $I$ is separated from 0. However, for the purposes of the
computation of $\dim S_n(I,\lambda)$ this condition can be lifted.
Indeed, by choosing a sequence of positive $\epsilon_i$,
$i\in\ZZ^+$, we get
$$
S_n(I,\lambda) = \bigcup_{i\in \ZZ^+} S_n(I\setminus [-\epsilon_i,
\epsilon_i], \lambda) \cup \big(S_n(I,\lambda)\cap
\{\mathbf{0}\}\big).
$$
Therefore if the bounds on $\dim S_n(I,\lambda)$, for $I$ separated
from zero, do not depend on $I$ the same bounds are in place for all
intervals $I$.

The sets $S_n(I,\lambda)$ were extensively investigated. But despite all the
efforts, their Hausdorff dimension is only completely known for $n=1$ and
$2$. Namely, the classical theorem of Jarnik and Besicovich states that
$$
\dim S_1(I,\lambda) = \frac{2}{1+\lambda}\quad \forall \lambda\ge 1.
$$
In the case $n=2$ the result is more complicated and is achieved by joint
efforts of Beresnevich, Dickinson, Schleischitz, Vaughan and
Velani~\cite{bdv_2007,vau_vel_2006,schleischitz_2016}:
$$
\dim S_2(I,\lambda) = \left\{\begin{array}{rl}
\frac{2-\lambda}{1+\lambda}&\mbox{ if }\frac12 \le \lambda\le
1;\\[1ex]
\frac{1}{1+\lambda}&\mbox{if } \lambda>1.
\end{array}\right.
$$
In fact, the first part of this formula for $\frac12 \le \lambda\le
1$ was verified for all sets $S_2^\vf (I,\lambda)$ with $\vf$ having
non-vanishing curvature. The second part of the result is specific
to $S_2(I,\lambda)$.

For $n\ge 3$, the Hausdorff dimension of $S_n(I,\lambda)$ is only
known for values of $\lambda$ in the close proximity of $1/n$, as
was already mentioned in~\eqref{sst}, and for relatively large
$\lambda$, due to the work of the author and
Bugeaud~\cite{bad_bug_2020}. Namely, for $\lambda\ge
\frac{n+4}{3n}$,
$$
\dim S_n(I,\lambda) = \frac{2}{n(1+\lambda)}.
$$
This result is a corollary of a more general upper and lower bounds
on $\dim S_n(I,\lambda)$ that cover a bigger range of $\lambda$:

\begin{theorembb}[Badziahin, Bugeaud, 2020]\label{th_bb}
For all $0\le k\le n-1$ and $\lambda\ge 1/n$ one has
$$
\dim S_n(I,\lambda)\ge \frac{(k+2)(1-k\lambda)}{(n-k)(1+\lambda)}.
$$
On the other hand, let $\lambda\ge \big\lfloor\frac{n+1}{2}\big\rfloor^{-1}$.
Setting, $m = \lfloor1/\lambda\rfloor$, one has
$$
\dim S_n(I,\lambda)\le \max_{0\le k\le
m}\frac{(k+2)(1-k\lambda)}{(n-2k)(1+\lambda)}.
$$
\end{theorembb}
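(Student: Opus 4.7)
The plan is to treat the two halves of the theorem separately. For the lower bound, fix $k \in \{0,1,\ldots,n-1\}$. The strategy is a Beresnevich-style ubiquity / regular systems argument applied not to the full family of rational points near $\VVV_n$, but only to a thin subfamily of ``structured'' rationals. Concretely, one would exhibit configurations $(q,\vp) \in \ZZ^{n+1}$ lying in an affine subvariety of $\RR^{n+1}$ of dimension $k+2$, cut out by $n-k-1$ algebraic relations that encode that $\vp/q$ lies exactly (or nearly) on $\VVV_n$ in $k$ of its coordinates. A simple volume count should show there are $\asymp H^{k+2}$ such configurations with all coordinates bounded by $H$, and the projections of the corresponding approximated points to the $x$-axis are spaced at scale $H^{-(k+2)/(n-k)}$. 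Feeding this count into the ubiquity framework with approximation function $q^{-\lambda}$ produces $\dim(S_n(I,\lambda)) \ge (k+2)(1-k\lambda)/((n-k)(1+\lambda))$.

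For the upper bound, assume $\lambda \ge \lfloor(n+1)/2\rfloor^{-1}$ and set $m = \lfloor 1/\lambda\rfloor$. The approach is a Borel--Cantelli / covering argument: the locus $B(q,\vp) := \{x \in I : \|q\vx - \vp\|_\infty < q^{-\lambda}\}$ is an interval of length $\ll q^{-(1+\lambda)}$, so summing $|B(q,\vp)|^s$ and detecting the critical $s$ yields a Hausdorff dimension bound. The heart of the argument is to count, in each dyadic range $Q \le q < 2Q$, the $(q,\vp)$ for which $B(q,\vp)$ meets $I$. One stratifies these integer vectors according to the dimension of the smallest rational affine subspace containing $(q,\vp)$ and close to the cone over $\VVV_n$; the parameter $k$ indexes the stratum. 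On the stratum of codimension determined by $k$, a successive-minima / geometry-of-numbers computation bounds the number of lattice points, and the factor $n-2k$ appears in the denominator because the geometry of numbers is applied inside a $2k$-dimensional ``degenerate'' slice while the orthogonal $(n-2k)$-dimensional complement carries the full approximation. Summing over strata and over dyadic $Q$, and taking the worst stratum, delivers the claimed bound $\max_{0\le k \le m}(k+2)(1-k\lambda)/((n-2k)(1+\lambda))$.

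The main obstacle, and the source of the discrepancy between the two bounds (an $n-k$ in one denominator and an $n-2k$ in the other), is precisely the integer-point counting in thin neighbourhoods of the cone over $\VVV_n$. A sharper count would improve the upper bound and close the gap; conversely, any genuine gap reflects a configuration of integer vectors with extra algebraic structure (resonances, resultants, or discriminants) that the naive counting cannot see. I would expect the proof to proceed by setting up these two counts abstractly and letting the ubiquity and covering machineries do the remaining bookkeeping, with the counting lemma doing the real work. Improving this lemma in the cubic case via the discriminant bound advertised in the abstract is exactly what allows the present paper to sharpen the Hausdorff dimension computation for $n=3$.
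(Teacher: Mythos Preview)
The paper does not prove Theorem~BB at all: it is quoted verbatim as a result from \cite{bad_bug_2020} and used only as background against which the new Theorems~\ref{th1}--\ref{th5} are measured. So there is no ``paper's own proof'' to compare your proposal against.

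That said, the paper does reveal part of the architecture of the upper-bound half, and your sketch is in the right spirit but imprecise on the key point. The stratification in \cite{bad_bug_2020} is not by ``the dimension of the smallest rational affine subspace containing $(q,\vp)$ and close to the cone over $\VVV_n$''. It is by the \emph{type} of $\vq$, defined in the present paper as the largest $h$ for which the Hankel matrix $M_{h,n}(\vq)$ has full rank $h+1$. This is a purely algebraic invariant of $\vq$ tied to the Veronese structure (vectors on $\VVV_n$ have type $0$), and the upper bound~\eqref{eq25} is proved stratum by stratum in \cite[Section~4]{bad_bug_2020}. Your geometry-of-numbers intuition for where the $n-2k$ comes from is roughly correct, but the mechanism is that a type-$h$ vector $\vq$ generates, via its shifts $\vq_0^{n-h},\ldots,\vq_h^{n-h}$, a sublattice whose covolume is controlled by the Pl\"ucker coordinates, and one passes to the dual lattice in the orthogonal complement of dimension $n-2h$; this is exactly what the present paper reuses in Proposition~\ref{prop3}.

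For the lower bound your ubiquity sketch is plausible as a strategy, but the paper gives no information about how \cite{bad_bug_2020} actually carries it out, so there is nothing here to compare it to.
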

The essence of this theorem is that $\dim S_n(I,\lambda)$ is
sandwiched between two piecewise rational functions, where the upper
bound is only provided for relatively large $\lambda$. Those two
bounds coincide for $\lambda\ge (n+4)/(3n)$.

The exact value of $\dim S_n(I,\lambda)$ for intermediate values of
$\lambda$ remains a mystery. It is believed to be a piecewise
rational function but there is no even a guess regarding its precise
formula.

In this paper we provide tighter upper bounds for $\dim
S_n(I,\lambda)$. The first result in some sense complements
Theorem~BB for $\lambda <\big\lfloor\frac{n+1}{2}\big\rfloor^{-1}$.
To the best of the author's knowledge, no other non-trivial upper
bounds for $\lambda$ in this range and outside of the interval
from~\eqref{sst} were known before.

\begin{theorem}\label{th1}
Let $m = \big\lfloor\frac{n-1}{2}\big\rfloor$. For all $\frac{1}{n} \le
\lambda \le \frac{2}{n}$ and $0\le k\le m$ one has
\begin{equation}\label{th1_eq}
\dim S_n(I,\lambda)\le
\frac{(n-k+1)(1-k\lambda)}{(n-2k)(1+\lambda)}.
\end{equation}
\end{theorem}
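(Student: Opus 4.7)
The plan is to adapt the Minkowski-plus-resultant scheme underlying the upper bound in Theorem~BB to the smaller range $\lambda\in[\frac1n,\frac2n]$. Fix $k\in\{0,\ldots,m\}$ and set $s_k:=\frac{(n-k+1)(1-k\lambda)}{(n-2k)(1+\lambda)}$; we aim to prove $\dim S_n(I,\lambda)\le s_k$.

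\textbf{Polynomial extraction.} For $x\in S_n(I,\lambda)$ and each dyadic scale $Q=2^N$, consider the sublattice $\Lambda_x^Q\subset\ZZ^{n+1}$ generated by the integer vectors $(q,\vp)$ with $q\in[Q,2Q]$ and $\|q\vf(x)-\vp\|_\infty\le q^{-\lambda}$, where $\vf(x)=(x,\ldots,x^n)$. These vectors lie in a tube of width $\ll Q^{-\lambda}$ about the direction $(1,\vf(x))$; under the assumption $\dim\Lambda_x^Q\ge k+1$ (partitioning $S_n(I,\lambda)$ by the stable rank otherwise), a volume computation gives $\covol(\Lambda_x^Q)\ll Q^{1-k\lambda}$. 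Applying Minkowski's second theorem to the primitive orthogonal lattice $(\Lambda_x^Q)^\perp\subset\ZZ^{n+1}$, which has rank $n-k$ and the same covolume, produces linearly independent integer vectors $\va_1,\ldots,\va_{n-k}$ with $\prod_i\|\va_i\|_\infty\ll Q^{1-k\lambda}$. Each $\va_i$ corresponds to a polynomial $P_i(T)\in\ZZ[T]$ of degree $\le n$, and the orthogonality relation together with the Veronese identity yields $|P_i(x)|\ll\|\va_i\|_\infty\cdot Q^{-1-\lambda}$.

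\textbf{Resultant-based covering.} The polynomials $P_1,\ldots,P_{n-k}$ impose a simultaneous near-vanishing condition at $x$ that is more restrictive than any single $P_i$. Using the classical bound $|\Res(P_i,P_j)|\ll H(P_i)^n H(P_j)^n\,(|P_i(x)|+|P_j(x)|)$ against $|\Res(P_i,P_j)|\ge 1$ for coprime integer pairs, the hypothesis $\lambda\le\frac2n$ forces many of the $P_i$ to share common irreducible factors. After partitioning the $P_i$ into common-factor classes and descending to a representative $R$ of each, we cover the relevant part of $S_n(I,\lambda)$ by neighbourhoods of near-roots of the $R$'s, each of length $\asymp Q^{-1-\lambda}$. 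The combinatorial core is to bound the total number of such intervals at scale $Q$ by $\ll Q^{(n-k+1)(1-k\lambda)/(n-2k)+\epsilon}$; summing the $s$-Hausdorff pre-measure $\sum_N Q^{(n-k+1)(1-k\lambda)/(n-2k)+\epsilon-s(1+\lambda)}$ over $Q=2^N$, convergence for $s>s_k$ yields $\dim S_n(I,\lambda)\le s_k$.

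\textbf{Main obstacle.} The principal technical difficulty is the interval count in the previous step: a naive count of lattice points in $(\Lambda_x^Q)^\perp$ gives a weaker exponent, and extracting the target $(n-k+1)/(n-2k)$ requires careful bookkeeping of factor-sharing patterns among the $n-k$ polynomials and a sharp estimate on the effective degree of representatives in each factor class. The hypothesis $\lambda\le\frac2n$ is essential for placing the resultant inequality in the factor-forcing regime; the restriction $k\le\lfloor(n-1)/2\rfloor$ ensures $n-2k\ge 1$, so $s_k$ is a meaningful positive exponent. Boundary configurations---polynomials of degree strictly less than $n$, multi-factor coincidences, or the extremal case $k=m$---will likely require separate case analysis.
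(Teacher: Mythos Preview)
Your scheme departs from the paper's in a way that leaves the crucial step unproved, and the missing ingredient is exactly what makes the argument work.

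The paper does not form the lattice $\Lambda_x^Q$ of \emph{all} approximating vectors at scale $Q$, nor does it use resultants for Theorem~\ref{th1}. Instead it partitions $S_n(I,\lambda)$ by the \emph{type} $h$ of a single approximating vector $\vq$, where $h$ is the largest integer for which the Hankel matrix $M_{h,n}(\vq)$ has full rank. When $h\ge k$, the $k{+}1$ shifted sub-vectors $\vq_i^{\,n-k}=(q_i,\dots,q_{i+n-k})\in\ZZ^{n-k+1}$, $0\le i\le k$, are linearly independent, and a Pl\"ucker estimate (\cite[Prop.~4.3]{bad_bug_2020}) bounds the covolume of their span by $q_0^{1-k\lambda}$. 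The orthogonal lattice now sits in $\RR^{n-k+1}$ with rank $n-2k$, so Minkowski produces one vector $\va\in\ZZ^{n-k+1}$ with $\|\va\|_\infty\ll q_0^{(1-k\lambda)/(n-2k)}$; because $\va$ annihilates every $\vq_i^{\,n-k}$, the associated polynomial has degree $n-k$, and one lands in $D_{n-k}(I,w)$ for $w=\frac{(n-2k)(1+\lambda)}{1-k\lambda}-1$. Bernik's theorem in dimension $n-k$ then yields the numerator $n-k+1$ and the denominator $n-2k$ directly. The strata of type $h<k$ are handled by the pre-existing Theorem~BB bound \eqref{eq25}, and the hypothesis $\lambda\le 2/n$ enters only to check that $(1-k\lambda)/(n-2k)$ is monotone in $k$, so that \eqref{eq25} for each $h<k$ is dominated by $s_k$.

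Your construction stays in $\ZZ^{n+1}$ and outputs $n-k$ polynomials of degree $n$; the specific numbers $n-k+1$ and $n-2k$ in $s_k$ have no natural source there. The ``combinatorial core'' you flag as the main obstacle is not a technicality: I do not see a route from ``resultant forces common factors among $n-k$ degree-$n$ polynomials'' to ``one degree-$(n-k)$ polynomial of height $\ll Q^{(1-k\lambda)/(n-2k)}$'', which is what the target exponent encodes. Your low-rank stratum (``partitioning by the stable rank otherwise'') is also not addressed, and the paper's treatment of the analogous low-type case already relies on the same Hankel machinery via Theorem~BB, so it cannot be skipped. The Hankel shift is the Veronese-specific device that converts a \emph{single} good approximation in $\RR^{n+1}$ into $k{+}1$ independent vectors in $\RR^{n-k+1}$; without it the argument does not close.
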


One can easily check that for $k=0$ and $\lambda>\frac1n$ the right
hand side of~\eqref{th1_eq} is strictly smaller than 1. Therefore
Theorem~\ref{th1} always gives a non-trivial upper bound for $\dim
S_n(I,\lambda)$. As in Theorem~BB, this upper bound is a piecewise
rational function of $\lambda$ where different pieces are derived
from different values of $k$. An enthusiastic reader can verify that
there exist values
$$
\frac{1}{n}=\lambda_0 < \lambda_1 < \cdots < \lambda_m<\lambda_{m+1}
= \frac{2}{n}
$$
such that for $\lambda_t<\lambda<\lambda_{t+1}$ the value $k=t$
provides the smallest upper bound in~\eqref{th1_eq} among all other
values of $k$. Then by considering $\lambda=\frac{2}{n}$ and $k=m$
we derive

\begin{corollary}
$$
\dim S_n(I,2/n) \le \left\{\begin{array}{rl}
\displaystyle\frac{m+2}{2m+3}&\mbox{if }\; n=2m+1,\\[2.5ex]
\displaystyle\frac{m+3}{2m+4}&\mbox{if }\; n=2m+2.
\end{array}\right.
$$
In both cases the right hand side is of the form $\frac12 +
O(n^{-1})$.
\end{corollary}

Note that if the formula~\eqref{sst} is satisfied for $\lambda =
\frac{3}{2n-1}$, as conjectured by Beresnevich and Yang
in~\cite{ber_yan_2023}, then we already have $\dim S_n(I,3/(2n-1)) =
\frac12$. This demonstrates that most likely, the result in
Theorem~\ref{th1} is not optimal.

The second result significantly extends the interval for $\lambda$
from~\cite{sst_2024} where $\dim S_n(I,\lambda)$ is given
by~\eqref{sst}.

\begin{theorem}\label{th2}
For all $\lambda$ between $\frac1n$ and $\frac{2}{2n-1}$ one has
$$
\dim S_n(I,\lambda) = \frac{2-(n-1)\lambda}{1+\lambda}.
$$
For $n=3$ the range for $\lambda$ can be extended to $\frac13\le
\lambda \le \frac{1}{2}$.
\end{theorem}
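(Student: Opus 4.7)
The lower bound $\dim S_n(I,\lambda) \ge \frac{2-(n-1)\lambda}{1+\lambda}$ is already established by Beresnevich's 2012 result on the entire range $\frac{1}{n} \le \lambda \le \frac{3}{2n-1}$. Since the range $\frac{1}{n} \le \lambda \le \frac{2}{2n-1}$ (for arbitrary $n$), as well as the range $\frac{1}{3} \le \lambda \le \frac{1}{2}$ for $n=3$, both lie inside this range, the lower bound is free, and all the work lies in the matching upper bound.

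My plan for the upper bound is a covering argument. A standard application of Minkowski's theorem in the lattice $\ZZ^{n+1}$ converts, for each $x \in S_n(I,\lambda)$, the infinitely many simultaneous approximations $\max_i |qx^i - p_i| \le q^{-\lambda}$ into infinitely many integer polynomials $P$ of degree at most $n$, of height $H(P) \asymp H$ (running over dyadic $H$), satisfying $|P(x)| \le H^{-\kappa(n,\lambda)}$ for an explicit exponent $\kappa$. Thus $S_n(I,\lambda)$ is contained in a limsup of sets $\{x \in I : |P(x)| \le H^{-\kappa}\}$, each a union of intervals of length $\ll H^{-\kappa}/|P'(x)|$. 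The naive count $\asymp H^{n+1}$ of integer polynomials only yields the weaker bound $\frac{n+1}{n(1+\lambda)}$, so I would split the polynomials into a \emph{generic} class (with $|P'(x)| \asymp H$ and discriminant bounded below) and a \emph{resonant} class (with small discriminant, so two roots are close and the resonant interval is wider than the generic estimate). For the generic class, a Sprindzuk-type resultant inequality prevents two distinct polynomials from being simultaneously small at the same $x$, giving the desired bound. For the resonant class one needs an effective count of integer polynomials of small discriminant; for general $n$ in the range $\lambda \le 2/(2n-1)$, classical counts in the style of Bernik--Beresnevich--Velani should suffice.

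The extension to $\lambda \le 1/2$ for $n=3$ is what requires the truly new input: classical discriminant counting is no longer tight enough, and the crux of the paper is the sharper estimate announced in the abstract,
\[
\#\bigl\{P \in \ZZ[T] : \deg P = 3,\; H(P) \le H,\; 0 < |\mathrm{disc}\, P| \le D\bigr\} \;\ll_{\epsilon}\; H^{2/3+\epsilon}\, D^{5/6}.
\]
Feeding this estimate into the resonant-class contribution for $n=3$ is what closes the gap all the way to $\lambda=1/2$. Establishing this cubic discriminant count will be the main obstacle; it must go well beyond the trivial $H^4$ count by exploiting that the vanishing of the discriminant $b^2c^2 - 4ac^3 - 4b^3d - 27a^2d^2 + 18abcd$ is a codimension-one condition in the $4$-dimensional coefficient space, performing a dyadic decomposition by the sizes of the coefficients and of the discriminant, and then counting integer coefficient vectors in the resulting thin regions with care.
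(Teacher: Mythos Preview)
Your high-level architecture matches the paper's: the lower bound is indeed free from Beresnevich~2012, the upper bound is a covering argument, and the extension to $\lambda\le 1/2$ for $n=3$ really does hinge on the cubic discriminant estimate you quote. But the execution plan diverges from the paper in three ways, and the first of them is a genuine gap.

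\textbf{The transference step.} Minkowski only gives $H(P)\ll q^{1/n}$, not $H(P)\asymp q^{1/n}$; the orthogonal vector $\va$ may have height far below the generic scale, and writing ``of height $H(P)\asymp H$'' is exactly where the difficulty hides. The paper does \emph{not} start from a pointwise transference. Instead (Section~3) it collects the approximating integer points $\vq$ with $\|\vq\|\in[2^k,2^{k+1})$ into boxes $\Delta(x_m,2^k)$ (Lemma~1) and classifies each box by its successive minima on $\ZZ^{n+1}$. Type-1 boxes (balanced minima) are disposed of by a direct count (Lemma~2). Only in type-2/3 boxes, where $\tau_{n+1}>1$ and the integer points span a hyperplane $\va\cdot\vx=0$, does a polynomial $P_\va$ appear --- and the crucial gain is that $P_\va$ is then automatically small on a whole \emph{interval} $J$ of controlled length (Lemma~6), not just at a point. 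It is the interval length, through Lemma~10, that forces the discriminant bound. Your generic/resonant dichotomy does not produce this structure; you would still need an analogue of the box/successive-minima analysis and of the further parameters $\sigma,\delta,\eta$ that track how far the minima are from balanced.

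\textbf{General $n$.} In the range $\lambda\le\frac{2}{2n-1}$ the paper uses no discriminant count at all. After isolating the type-3 polynomials (irreducible in the hard case), it stratifies them by the root-distance vector $\vkappa$ and invokes the integrality $|\mathrm{Res}(P_{\va_1},P_{\va_2})|\ge 1$ for distinct irreducible polynomials to force a separation $|x_0(\va_1)-x_0(\va_2)|\gg Q^{-\delta_0}$ (Section~8). This is closer to your ``generic class'' treatment than to your resonant plan, but it is applied inside the already-restricted family of type-3 polynomials, not to the full $\asymp H^{n+1}$ polynomials.

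\textbf{The cubic count.} For $n=3$ you have the right target, but the paper's proof of $N(H,D)\ll H^{2/3+\epsilon}D^{5/6}$ goes through $\SL_2(\ZZ)$-equivalence of cubic forms rather than dyadic decomposition in coefficient space: each equivalence class contains a representative with uniformly separated roots (Lemma~11), the number of class members of height $\le H$ is $\ll H^{2/3+\epsilon}|D|^{-1/6}$ by counting M\"obius transforms (Proposition~5), and Davenport's class-number asymptotic $\sum_{|d|\le D}h(d)\asymp D$ plus Abel summation finishes. Your direct approach in the $(a,b,c,d)$-space may be viable, but it is a different argument and would need its own justification.
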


Notice that in~\eqref{sst} the upper bound for $\lambda$ is of the
form $\frac1n + \frac{1}{2n^3} + O(n^{-4})$, while in our result it
is $\frac1n + \frac{1}{2n^2}+O(n^{-3})$.

For $n=3$ and $\frac12\le \lambda\le \frac35$ we can significantly
improve the upper bound in Theorem~\ref{th1}.

\begin{theorem}\label{th5}
For all $\lambda$ between $\frac12$ and $\frac35$ one has
$$
\dim S_3(I,\lambda) \le \frac{4-2\lambda}{3(1+\lambda)}.
$$
\end{theorem}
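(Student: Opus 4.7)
The plan is to cover $S_3(I,\lambda)$ by short intervals around real roots of integer cubic polynomials produced via Minkowski's theorem, and to estimate the Hausdorff $s$-content of the cover using the new counting bound $\#\{P\in\ZZ[y]:\deg P=3,\,H(P)\le H,\,|\mathrm{disc}(P)|\le D\}\ll_\epsilon H^{2/3+\epsilon}D^{5/6}$ advertised in the abstract.

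For each $x\in S_3(I,\lambda)$ and each dyadic scale $Q$ at which the Diophantine condition is realised, pick $(q,\vp)\in\ZZ^{4}$ with $q\in[Q,2Q]$ and $\|q\vf(x)-\vp\|_\infty\le q^{-\lambda}$, and apply Minkowski's first theorem to the rank-$3$ sublattice $\Lambda=\{\va\in\ZZ^4:a_0q+a_1p_1+a_2p_2+a_3p_3=0\}$, whose covolume is $\asymp Q$. This yields a nonzero integer polynomial $P=\sum a_iy^i$ with $H(P)\ll Q^{1/3}$, and a short calculation based on $p_i\approx qx^i$ gives $|P(x)|\ll H(P)\,Q^{-\lambda-1}\ll Q^{-\lambda-2/3}$. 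The real root $\alpha$ of $P$ nearest to $x$ satisfies $|x-\alpha|\ll|P(x)|/|P'(\alpha)|$, and the standard lower bound $|P'(\alpha)|\gg|\mathrm{disc}(P)|^{1/2}/H(P)$ localises $x$ inside an interval of length $r\ll Q^{-\lambda-1/3}/|\mathrm{disc}(P)|^{1/2}$.

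Bucketing by the dyadic range of $D=|\mathrm{disc}(P)|$ in $[1,Q^{4/3}]$, the counting bound gives at most $Q^{2/9+\epsilon}D^{5/6}$ polynomials per bucket, each contributing at most three intervals of length $\ll Q^{-\lambda-1/3}D^{-1/2}$. The Hausdorff $s$-sum then takes the form
\[
\sum_{Q}\sum_{D}Q^{2/9+\epsilon}D^{5/6}\bigl(Q^{-\lambda-1/3}D^{-1/2}\bigr)^{s},
\]
and one optimises over the admissible range of $D$ at each scale $Q$.

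The main obstacle is that the naive optimisation, placing $D$ at the trivial upper limit $Q^{4/3}$, only delivers the weaker exponent $s\ge\frac{4}{3(1+\lambda)}$. To reach the sharp value $s=\frac{4-2\lambda}{3(1+\lambda)}$ a more delicate treatment of the large-discriminant regime is needed. The natural route is a dichotomy: in the near-double-root regime, where $\alpha$ is poorly separated from a second real root of $P$, the proximity of two roots forces $|P(x)|$ to be much smaller than the generic $Q^{-\lambda-2/3}$, shrinking the interval beyond the generic estimate; in the isolated-root regime one invokes a second Minkowski polynomial coming from the higher successive minima, and uses the resultant identity $\mathrm{Res}(P_1,P_2)\in\ZZ\setminus\{0\}$ together with $|P_i(x)|\le Q^{-\lambda-2/3}$ to exclude the largest discriminants. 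Balancing these contributions at a threshold $D^\ast=Q^{4\eta(\lambda)/3}$ depending on $\lambda$ produces the stated exponent, and the restriction $\lambda\le 3/5$ arises as the largest range in which this balancing still improves on $\frac{4}{3(1+\lambda)}$.
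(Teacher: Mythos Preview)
Your approach has a genuine gap at exactly the point you flag as ``the main obstacle''. The single-point Minkowski construction you use produces a cubic $P$ with $H(P)\ll Q^{1/3}$ and $|P(x)|\ll Q^{-\lambda-2/3}$, but gives no control on $|P'(x)|$ or on $D(P)$ beyond the trivial $D(P)\ll H(P)^4\asymp Q^{4/3}$. As you correctly compute, optimising your $s$-sum over $D$ then forces $D$ to the top of its range and yields only $\dim\le\frac{4}{3(1+\lambda)}$. The dichotomy you sketch does not rescue this. In the ``near-double-root'' case the logic is inverted: if $\alpha$ is close to a second root, then $|P'(\alpha)|$ is \emph{small}, so the covering interval $|x-\alpha|\ll |P(x)|/|P'(\alpha)|$ is \emph{longer}, not shorter; and $|P(x)|$ itself is fixed by the Minkowski construction, not driven down by the root geometry. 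In the ``isolated-root'' case, a resultant bound between two polynomials $P_1,P_2$ both small at $x$ constrains how close their roots can be to each other, but it does not bound the discriminant of $P_1$ alone, which is what your $s$-sum needs. Finally, your explanation of the threshold $\lambda=3/5$ cannot be right: $\frac{4-2\lambda}{3(1+\lambda)}<\frac{4}{3(1+\lambda)}$ for every $\lambda>0$, so there is no crossover of those two expressions.

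What the paper does instead is structural. Rather than attaching one polynomial to each approximating $(q,\vp)$, it groups the approximants $\vq$ with $\|\vq\|\asymp Q$ into boxes $\Delta_m$ of width $\asymp Q^{-(1+\lambda)/2}$ along the curve. Boxes with the ``expected'' number of integer points are handled by a direct count (Lemma~\ref{lem2}). The remaining (type~2/3) boxes have all their integer points on a common hyperplane $\va\cdot\vx=0$, and the key geometric fact (Lemma~\ref{lem6}) is that the resulting polynomial $P_\va$ is small on an \emph{interval} $J$ of length $\gg Q^{-(1+\lambda)/2-\eta}$, not merely at one point. Smallness on an interval forces all low derivatives of $P_\va$ to be small there (Lemma~\ref{lem10}), which in turn gives the nontrivial bound $D(P_\va)\ll H(P_\va)^{4}Q^{-1-\lambda+2\eta}$. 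Only with this discriminant saving does the counting theorem $N(H,D)\ll H^{2/3+\epsilon}D^{5/6}$ produce the exponent $\frac{4-2\lambda}{3(1+\lambda)}$; the zero-discriminant polynomials $(ax-b)^2(cx-d)$ are then treated separately by an explicit root-counting argument. Your proposal skips the interval step entirely, and without it the target bound is out of reach.
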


{\bf Remark.} The upper bound in Theorem~\ref{th5} is not sharp.
With more delicate estimates in some of the cases of the proof in
Section~\ref{sec3}, it can be improved. However we decide not to
make already tedious proof even more complicated and leave the bound
in its present form.

The proofs of Theorem~\ref{th2} for $n=3$ and Theorem~\ref{th5} rely
on counting cubic polynomials of bounded heights and discriminants.
Here by the height of a polynomial $P$, denoted by $H(P)$, we
understand its naive height, i.e. the supremum norm of its
coefficient vector. We also denote the discriminant of $P$ by
$D(P)$. In this paper, we prove the following result which is of
independent interest:

\begin{theorem}\label{th3}
For any $\epsilon>0$ there exists a constant $c = c(\epsilon)$ such
that the number $N(H,D)$ of cubic integer polynomials $P$ with
$H(P)\le H$ and $0<|D(P)|\le D$ is bounded from above by
\begin{equation}\label{th3_eq}
N(H,D)\le c H^{2/3+\epsilon} D^{5/6}.
\end{equation}
\end{theorem}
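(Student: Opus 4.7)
The starting point is to view the cubic discriminant
\[
D(P) = -27 a^2 d^2 + (18abc - 4b^3) d + (b^2 c^2 - 4 a c^3)
\]
as a quadratic polynomial in $d$ and complete the square to obtain
\[
D(P) = -27 a^2 (d - d_0)^2 + \frac{4 \Delta^3}{27 a^2}, \qquad d_0 := \frac{9abc - 2b^3}{27 a^2},\qquad \Delta := b^2 - 3ac.
\]
For fixed $a, b, c \in \ZZ$ with $a \neq 0$, the condition $|D(P)| \le D$ then confines $d$ to a union of at most two intervals symmetric about $d_0$. Writing $K := 4\Delta^3/(27 a^2)$, the total Lebesgue measure of this set is $\ll \sqrt{D}/|a|$ when $K \le D$, and $\ll D|a|/|\Delta|^{3/2}$ when $K > D$ (equivalently $\Delta > (27 D a^2/4)^{1/3}$), in which latter case it consists of two thin arcs separated by a gap of order $\sqrt{K}/|a|$.

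With this in hand I would introduce the threshold $\Lambda_0(a) := (D a^2)^{1/3}$ and split the sum
\[
N(H, D) = \sum_{a,b,c} \#\{d \in \ZZ : |D(P)| \le D\}
\]
according to whether $|\Delta| \le \Lambda_0(a)$ (small-$\Delta$ regime) or $|\Delta| > \Lambda_0(a)$ (large-$\Delta$ regime). In the small-$\Delta$ regime the number of integer pairs $(b, c) \in [-H, H]^2$ with $|b^2 - 3ac| \le \Lambda_0$ is bounded, for fixed $a$, by noting that $b$ is forced into $|b| \ll \sqrt{|a|H + \Lambda_0}$ (since $c$ lies in $[-H,H]$), and that for each $b$ the integer $c$ lies in an interval of length $\ll \Lambda_0/|a|$. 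Multiplying by $\sqrt{D}/|a| + 1$ to count $d$ and summing over $a \in [1, H]$, the dominant contribution comes from $|\Delta| \asymp \Lambda_0(a)$ and produces the main term $\ll H^{2/3} D^{5/6}$.

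In the large-$\Delta$ regime the $d$-interval has total length $\ll 1$, so each $(a, b, c)$ contributes $0$ or $1$, the latter occurring only when $d_0 = b(9ac - 2b^2)/(27 a^2)$ lies within $D|a|/|\Delta|^{3/2}$ of an integer. I would recast this as the congruence $b(9ac - 2b^2) \equiv 0 \pmod{27 a^2}$ up to an error of order $D|a|^3/|\Delta|^{3/2}$, and dyadically decompose $|\Delta| \asymp 2^j \Lambda_0(a)$. Fixing $a$ and $b$ and counting admissible $c$ modulo each divisor of $27 a^2$ that arises, the dyadic series again sums to $\ll H^{2/3} D^{5/6}$, with the divisor-function estimate $\tau(n) \ll_\epsilon n^\epsilon$ producing the extra $H^\epsilon$ factor claimed.

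The main technical obstacle is the unified treatment of the ``$+1$'' term inherent in counting integers in a short interval: in the small-$\Delta$ regime with $|a| > \sqrt{D}$, and throughout the large-$\Delta$ regime, the $d$-interval is shorter than one and the contribution of any single $(a,b,c)$ is governed by a Diophantine condition on $d_0 \bmod 1$. Handling both situations together with enough uniformity to retain only an $H^\epsilon$-loss requires careful book-keeping of the divisor sums associated to the modulus $27 a^2$, plus an honest verification that for $a$ small (where the ``trivial" counts threaten to dominate) the constraint $|b| \ll \sqrt{aH + \Lambda_0}$ cuts the $b$-range down enough. Once the threshold $\Lambda_0$ is optimized and the dyadic scales summed, the two regimes balance and the claimed estimate $N(H, D) \ll_\epsilon H^{2/3+\epsilon} D^{5/6}$ follows.
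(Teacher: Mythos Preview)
Your route is genuinely different from the paper's. The paper groups cubic polynomials into $\mathrm{GL}_2(\ZZ)$-equivalence classes (which preserve the discriminant), proves in Proposition~\ref{prop5} that a single class of discriminant $d$ contains at most $c(\epsilon)H^{2/3+\epsilon}|d|^{-1/6}$ polynomials of height $\le H$, and then combines this with Davenport's theorem $\sum_{|d|\le D} h(d)\asymp D$ by Abel summation. Your approach is direct and analytic, with no reduction theory at all. In principle such a method could work, and your completion-of-the-square identity $D(P)=-27a^2(d-d_0)^2+4\Delta^3/(27a^2)$ is a natural starting point.

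The real gap is in your large-$\Delta$ regime. When $K=4\Delta^3/(27a^2)>D$, the admissible set for $d$ is \emph{not} a neighbourhood of $d_0$: it consists of two arcs centred at $d_0\pm r$ with $r=2\Delta^{3/2}/(27a^2)$ and total length $\asymp D/|\Delta|^{3/2}$ (not $D|a|/|\Delta|^{3/2}$, incidentally). So the condition ``there exists an integer $d$'' is that $d_0+r$ or $d_0-r$ lies close to an integer, not that $d_0$ does. Since $r$ involves $\Delta^{3/2}$, which is irrational whenever $\Delta$ is not a perfect square, this cannot be recast as the congruence $b(9ac-2b^2)\equiv 0\pmod{27a^2}$ with a small error; the divisor-sum argument you sketch simply does not apply to the quantity that actually needs to be controlled. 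One can rewrite the condition as the existence of an integer $\ell\equiv -(b^3-3b\Delta)\pmod{27a^2}$ with $|\ell^2-4\Delta^3|\le 27a^2D$, which is at least an integer statement, but now you must show that for most $(a,b,c)$ no such $\ell$ exists---a different and harder problem than a single residue count.

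This omission is fatal as stated: using only the trivial bound of $O(1)$ values of $d$ per triple $(a,b,c)$ in the large-$\Delta$ regime yields a contribution of order $H^3$, which dominates $H^{2/3+\epsilon}D^{5/6}$ whenever $D\ll H^{14/5}$. The small-$\Delta$ main term does indeed come out to $H^{2/3}D^{5/6}$ as you compute, but the large-$\Delta$ tail needs a genuinely new idea before the argument can close.
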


{\bf Remark.} We believe that by using more delicate arguments than
in this paper, one can remove the term $\epsilon$ in the upper
bound~\eqref{th3_eq}. However, this is not important for our main
aim and will only make the paper more complicated.

If we take $D = H^{4 - 2v}$ the bound~\eqref{th3_eq} transforms to
$N(H,D)\le c H^{4 - \frac{5}{3}v + \epsilon}$. On the other hand,
Beresnevich, Bernik and G\"otze~\cite{bbg_2016} verified that $N(H,
\gamma H^{4 - 2v}) \gg H^{4-\frac{5}{3}v}$ for a suitable absolute
constant $\gamma>0$. Therefore our upper bound is sharp (or rather
almost sharp as we have an additional small term $H^\epsilon$). It
is worth mentioning that several similar upper bounds were achieved
in the last two decades, see~\cite{kgk_2014, bbg_2017}. However, all
of them have restrictions on heights and discriminants, while our
result works for all possible pairs $(H,D)$.

Wherever possible, while proving the results of this paper we will deal with
the more general sets $S_n^\vf(I,\lambda)$ and only consider $S_n(I,\lambda)$
if we use some specific properties of the Veronese curve.

\section{Dual sets of well approximable points}

One of the ideas in this paper is to transfer the problem about
simultaneously well approximable points to the one about dually well
approximable points. Namely, a point $\vx\in\RR^n$ is called dually
$q^{-w}$-well approximable if the following inequality
$$
|\vq\cdot \vx - p|\le ||\vq||_\infty^{-w}
$$
has infinitely many solutions $(\vq,p)\in\ZZ^{n+1}$. By $D_n(w)$ we
denote the set of dually $q^{-w}$-well approximable points. For many
manifolds $\MMM$ the sets $D_n(w)\cap \MMM$ are better understood
compared to $S_n(\lambda)\cap \MMM$. For example, this is the case
for $\MMM = \VVV_n$ where the exact value of $\dim (D_n(w)\cap
\VVV_n)$ for all $n\in\NN$ and $\lambda \ge n$ was computed by
Bernik~\cite{bernik_1983}:
\begin{equation}\label{bernik}
\dim(D_n(w)\cap \VVV_n) = \frac{n+1}{w+1}.
\end{equation}
As in the simultaneous case, we locally parametrise the curve $\CCC$ by $\vf$
and consider the following set:
$$
D_n^\vf (I,w):=\{x\in I: |\vq\cdot \vf(x) - p|\le||\vq||_\infty^{-w} \mbox{ for
i.m. } (\vq,p)\in\ZZ^{n+1}\}.
$$
In the case of the Veronese curve we omit the superscript $\vf$ and
write $D_n(I,w)$.

We demonstrate the idea by proving the following result. It gives a
weaker upper bound compared to Theorem~\ref{th1} but it is satisfied
for all curves $\CCC$ under mild conditions on $\vf$. In fact, it is
a quick corollary of the classical Khintchine transference
principle~\cite[Chapter V]{cassels_1957}. But since the proof is
short, we decide to provide it here for demonstrational purposes.

\begin{theorem}\label{th4}
Suppose that for any rational hyperplane $\HHH\subset \RR^{n+1}$,
$\dim ((1,\vf(I))\cap \HHH)=0$. Then
$$
\dim S^\vf_n(I,\lambda) \le \dim D^\vf_n(I, n(1+\lambda)-1).
$$
\end{theorem}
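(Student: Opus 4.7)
The plan is to run a Khintchine-style transference argument adapted to the curve. For $x\in S_n^\vf(I,\lambda)$, I would pick $\lambda_1\in(\lambda,\lambda^\vf(x))$, take the infinitely many simultaneous approximations $(q,\vp)\in\ZZ^{n+1}$ with $\|q\vf(x)-\vp\|_\infty\le q^{-\lambda_1}$, and convert each into a dual approximation of quality $w_1:=n(1+\lambda_1)-1$; the hyperplane hypothesis will then be used to rule out the degenerate case in which the resulting dual approximations collapse to a single pair.

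Concretely, to each $(q,\vp)$ I would associate the sublattice
\[
\Lambda_q=\{\vq'\in\ZZ^n:\vq'\cdot\vp\equiv 0\pmod q\}\subseteq\ZZ^n,
\]
whose covolume is at most $q$. Minkowski's convex body theorem applied to the cube $\|\vq'\|_\infty\le q^{1/n}$ (of volume $2^n q$) yields a non-zero $\vq'\in\Lambda_q$ with $\|\vq'\|_\infty\le q^{1/n}$. Set $p':=(\vq'\cdot\vp)/q\in\ZZ$. Using $q\ge\|\vq'\|_\infty^n$, a direct estimate gives
\[
|\vq'\cdot\vf(x)-p'|=|\vq'\cdot(\vf(x)-\vp/q)|\le n\|\vq'\|_\infty q^{-1-\lambda_1}\le n\|\vq'\|_\infty^{-w_1}.
\]
Since $w_1>w:=n(1+\lambda)-1$, once $\|\vq'\|_\infty$ is sufficiently large this is dominated by $\|\vq'\|_\infty^{-w}$, so $(\vq',p')$ is a genuine dual $\|\vq'\|_\infty^{-w}$-approximation of $\vf(x)$.

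The main obstacle is to verify that, as $(q,\vp)$ ranges over the infinite family of simultaneous approximations, the resulting pairs $(\vq',p')$ are infinitely many and distinct, equivalently that $\|\vq'\|_\infty\to\infty$. If not, then, since $|p'|\ll\|\vq'\|_\infty$ on the bounded set $\vf(I)$, only finitely many $(\vq',p')$ could appear; by pigeonhole some $(\vq'_0,p'_0)$ would arise from an infinite subsequence $q\to\infty$, and the estimate above would force $|\vq'_0\cdot\vf(x)-p'_0|\to 0$, hence $\vq'_0\cdot\vf(x)=p'_0$ exactly, placing $(1,\vf(x))$ on the rational hyperplane $\{(y_0,\vy):\vq'_0\cdot\vy-p'_0 y_0=0\}$. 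Let $E\subseteq I$ denote the countable union over all rational hyperplanes $\HHH$ of the preimages $\{x\in I:(1,\vf(x))\in\HHH\}$; the hypothesis $\dim((1,\vf(I))\cap\HHH)=0$ combined with the locally bi-Lipschitz nature of $x\mapsto(1,\vf(x))$ forces $\dim E=0$. Hence $S_n^\vf(I,\lambda)\setminus E\subseteq D_n^\vf(I,w)$, and passing to Hausdorff dimensions gives $\dim S_n^\vf(I,\lambda)\le\dim D_n^\vf(I,n(1+\lambda)-1)$.
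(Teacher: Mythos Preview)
Your proof is correct and follows essentially the same transference strategy as the paper: both pick a short integer vector orthogonal (resp.\ congruence-orthogonal) to the approximation vector via Minkowski and convert it into a dual approximation, then use the rational-hyperplane hypothesis to force infinitely many distinct dual vectors. Your lattice $\Lambda_q\subset\ZZ^n$ is exactly the projection of the paper's $\langle(q,\vp)\rangle^\bot\subset\ZZ^{n+1}$ to the last $n$ coordinates, and your explicit choice of $\lambda_1>\lambda$ merely replaces the paper's implicit $\ll$-constants, so the two arguments are the same in substance.
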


{\bf Remark.} Notice that in Theorem~\ref{th4} the condition on the
curve $\vf(I)$ is weaker than the property of nondegeneracy as was
defined in the work of Kleinbock and Margulis~\cite{kle_mar_1998}.
Recall that a curve $\vf(I)$ is called nondegenerate at $x\in I$ if
it is enough times continuously differentiable and all partial
derivatives of $\vf$ span $\RR^n$. Then one can check that as soon
as the Hausdorff dimension of points $x\in I$ where the curve is
degenerate equals zero, the conditions of Theorem~\ref{th4} are
satisfied. On the other hand, affine lines with $f_i(x) = a_ix +
b_i$ where the numbers $a_1,a_2,\ldots, a_n$ are linearly
independent over $\QQ$, are degenerate at every point, while they
still satisfy the conditions of Theorem~\ref{th4}.

\proof Consider $x\in S^\vf_n(I,\lambda)$. By removing the set of zero
Hausdorff dimension, we make sure that $(1,\vf(x))$ does not lie on any
rational hyperplane in $\RR^{n+1}$. Then by definition, there are infinitely
many points $\vq = (q,\vp)\in\ZZ^{n+1}$ that satisfy $||q\vf(x) -
\vp||_\infty \le q^{-\lambda}$. Fix one such point $\vq$ with large enough
$q$. By the Minkowski theorem, there exists $\va\in\langle \vq\rangle^\bot$
such that $||\va||_\infty \le ||\vq||_\infty^{1/n}$.

Now recall that by the choice of $I$, $|q| \asymp ||\vq||_\infty$ and compute
\begin{equation}\label{eq1}
|\va\cdot (1,\vf(x))| = |q^{-1} (\va\cdot \vq + a_1(qf_1(x) -
p_1) + \ldots + a_n(q f_n(x) - p_n))| \ll
\frac{||a||_\infty}{||\vq||_\infty ^{1+\lambda}}.
\end{equation}
This immediately implies that $|\va\cdot (1,\vf(x))| \ll
||\va||_\infty^{1-n(1+\lambda)}$. Since the left hand side
of~\eqref{eq1} can not be equal to 0, this inequality can not be
satisfied for a fixed $\va$ and infinitely many different $\vq$.
Hence we must have the inequality $|\va\cdot (1,\vf(x))| \ll
||\va||_\infty^{1-n(1+\lambda)}$ for infinitely many
$\va\in\ZZ^{n+1}$ which immediately implies that $x\in
D^\vf_n(I,n(1+\lambda)-1)$. The statement of the theorem follows
immediately.
\endproof

For the set $S_n(I,\lambda)$, in view of Bernik's
result~\eqref{bernik}, Theorem~\ref{th4} gives an upper bound
$$
\dim S_n(I, \lambda)\le \frac{n+1}{n(1+\lambda)},
$$
which is the case $k=0$ in Theorem~\ref{th1}. To get the upper
bounds~\eqref{th1_eq} for bigger $k$, we need to use specific properties of
the Veronese curve.

Given a vector $\va\in\ZZ^{m+1}$ where $m\le n$, by $L_\va$ we
define an $m$-dimensional subspace of $\RR^{n+1}$ defined by the
following linear equations:
\begin{equation}\label{def_la}
a_0x_j + a_1x_{j+1} + \ldots + a_mx_{j+m} = 0,\quad 0\le j\le n-m.
\end{equation}

For $0\le h\le n/2$, we say that a vector $\vq\in\ZZ^{n+1}$ is of
type $h$ if $h$ is the largest number such that the rank of the
matrix
$$
M_{h,n}(\vq):=\left(\begin{array}{cccc} q_0&q_1&\cdots&q_{n-h}\\
q_1&q_2&\cdots&q_{n-h+1}\\
\vdots&\vdots&\ddots&\vdots\\
q_h&q_{h+1}&\cdots&q_n
\end{array}
\right)
$$
is $h+1$, i.e. it is full. For example, for points $\vq$ on the
Veronese curve, i.e. $\vq = (u^n, u^{n-1}v, \ldots, v^n)$, their
type equals zero.

{\bf Proof of Theorem~\ref{th1}}. Write $S_n(I,\lambda)$ as a union of
$\big\lfloor \frac{n}{2}\big\rfloor + 1$ subsets $S_n^h(I,\lambda)$, $0\le
h\le \big\lfloor \frac{n}{2}\big\rfloor$ such that
$$
S_n^h(I,\lambda) := \{x\in \RR: \max_{1\le i\le n} |q_0x^i - q_i|
\le q_0^{-\lambda}\mbox{ for i.m. } \vq\in \ZZ^{n+1}\mbox{ of type
}h\}.
$$
Clearly, as $h$ varies, the sets $S_n^h(I,\lambda)$ may have nonempty
intersection but the important point is that
$$
\dim S_n(I,\lambda) = \max_{0\le h\le \lfloor \frac{n}{2}\rfloor}
\dim S_n^h(I,\lambda).
$$

In~\cite[Section 4]{bad_bug_2020} the authors show that\footnote{In
fact, the value $h$ in~\cite{bad_bug_2020} equals $h+1$ in this
paper, hence the formula looks slightly different.}  for $h< n/2$
\begin{equation}\label{eq25}
\dim S_n^h(I,\lambda) \le \frac{(h+2)(1-h\lambda)}{(n -
2h)(1+\lambda)}.
\end{equation}
Here we prove a different upper bound for $\dim S_n^h(I,\lambda)$:

\begin{proposition}\label{prop3}
For all $0\le k\le \min\{h,\frac{n-1}{2}\}$, one has
$$
\dim
S_n^h(I,\lambda)\le\frac{(n-k+1)(1-k\lambda)}{(n-2k)(1+\lambda)}.
$$
\end{proposition}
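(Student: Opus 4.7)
My strategy is to refine the dual-approximation transfer of Theorem~\ref{th4}, exploiting the low rank of the Hankel matrix $M_{h+1,n}(\vq)$ forced by the type-$h$ condition. If $\vq\in\ZZ^{n+1}$ has type $h$, then $M_{h+1,n}(\vq)$ is rank-deficient, so its left null space contains a primitive nonzero $\va=(a_0,\dots,a_{h+1})\in\ZZ^{h+2}$ satisfying $a_0q_j+\dots+a_{h+1}q_{j+h+1}=0$ for all $0\le j\le n-h-1$. The associated polynomial $P_\va(T)=\sum_{i=0}^{h+1}a_iT^i$ has degree $\le h+1$, and for each $\ell=0,\dots,n-h-1$ the shift $\va^{(\ell)}=(0,\dots,0,a_0,\dots,a_{h+1},0,\dots,0)\in\ZZ^{n+1}$ with $a_0$ in coordinate $\ell$ lies in $\vq^\perp$. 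Repeating the calculation~\eqref{eq1} with $\va^{(\ell)}$ in place of $\va$ gives $|\va^{(\ell)}\cdot(1,\vf(x))|=|x^\ell P_\va(x)|\ll ||\va||_\infty Q^{-1-\lambda}$, where $Q:=||\vq||_\infty$; since $I$ is separated from $0$ I conclude the polynomial estimate $|P_\va(x)|\ll ||\va||_\infty\,Q^{-1-\lambda}$.

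A single polynomial of degree $\le h+1$ only recovers the weak case $k=h$, which is essentially~\eqref{eq25}. To reach the sharper bound for $k<h$, I apply Minkowski's second theorem to the lattice $\vq^\perp\cap\ZZ^{n+1}$ (of covolume $\asymp Q$) together with the rigidity of the Hankel-structured short basis to produce $k+1$ linearly independent integer vectors $\vb^{(0)},\dots,\vb^{(k)}\in\vq^\perp$ whose norms satisfy a product bound strictly better than the naive $Q^{(k+1)/n}$. Each $\vb^{(i)}$ yields a dual approximation $|P_{\vb^{(i)}}(x)|\ll ||\vb^{(i)}||_\infty Q^{-1-\lambda}$ where $P_{\vb^{(i)}}$ has degree $\le n$, and taking an appropriate $\ZZ$-linear combination that cancels the top $k$ coefficients produces a nonzero integer polynomial $P^*$ of degree $\le n-k$, with height controlled by $\prod_i ||\vb^{(i)}||_\infty$ and value $|P^*(x)|$ controlled by the same product times $Q^{-1-\lambda}$.

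For each such $(\vq,\vb^{(\cdot)})$ the admissible $x$ lie in an interval around the roots of $P^*$ of length $\ll|P^*(x)|/|P^{*\prime}(x)|$. Partitioning type-$h$ vectors into dyadic ranges $Q\asymp 2^N$ and $||\va||_\infty\asymp 2^M$, I bound the number of $\vq$ in each cell using the rank condition (which confines $\vq$ to the $(h+1)$-dimensional kernel of the linear map associated with $\va$), sum the $s$-dimensional Hausdorff content of the intervals, and apply the Hausdorff--Cantelli lemma to identify the critical exponent $s=\frac{(n-k+1)(1-k\lambda)}{(n-2k)(1+\lambda)}$. The main obstacle will be the middle step: translating the type-$h$ rank constraint into a concrete product bound on the first $k+1$ successive minima of $\vq^\perp\cap\ZZ^{n+1}$. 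The naive geometry-of-numbers bound is insufficient, and a Cauchy--Binet or dual-lattice analysis is required to show that the shifts $\va^{(\ell)}$ generate a sublattice of $\vq^\perp$ whose covolume is small enough to drive the first $k+1$ minima to the product $\ll Q^{(k+1)/(n-k)}$ demanded by the final dimension computation. Once this geometric estimate is in hand, the counting in the last step proceeds in the spirit of~\cite{bad_bug_2020}.
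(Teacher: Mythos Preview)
Your plan has a genuine gap at precisely the step you flag as the ``main obstacle'', and in fact the target you set for that step is wrong. The type-$h$ condition is a pure rank statement: it guarantees a nonzero annihilator $\va\in\ZZ^{h+2}$ of $M_{h+1,n}(\vq)$, but gives \emph{no} control on $\|\va\|_\infty$. Likewise, Minkowski applied to $\vq^\perp\cap\ZZ^{n+1}$ only knows the covolume $\asymp Q$, so the product of the first $k+1$ minima is bounded by an expression independent of $\lambda$. But the exponent you need is $\lambda$-dependent: to land in $D_{n-k}(I,w)$ with $w+1=\tfrac{(n-2k)(1+\lambda)}{1-k\lambda}$ you must produce a vector $\va\in\ZZ^{n-k+1}$ with $\|\va\|_\infty\ll Q^{(1-k\lambda)/(n-2k)}$, not $Q^{(k+1)/(n-k)}$ as you wrote. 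The $1-k\lambda$ can only come from the approximation hypothesis $|q_0x^i-q_i|\le q_0^{-\lambda}$, which your geometry-of-numbers step never uses. Your degree-reduction by $\ZZ$-linear combination is also problematic: the Cramer coefficients needed to kill the top $k$ coordinates blow the height up to the full product $\prod_i\|\vb^{(i)}\|_\infty$, so even a correct product bound on the minima would not yield a single short $\va$ of the required size.

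The paper avoids all of this by never working in $\vq^\perp\subset\RR^{n+1}$. Instead it drops to $\RR^{n-k+1}$ from the start: take the $k+1$ truncated shifts $\vq_i^{n-k}:=(q_i,\dots,q_{i+n-k})$, $0\le i\le k$, which are linearly independent because the type is $\ge k$. The crucial input is \cite[Proposition~4.3]{bad_bug_2020}, which uses the simultaneous approximation $|q_0x^i-q_i|\le q_0^{-\lambda}$ to bound every Pl\"ucker coordinate of $\vq_0^{n-k}\wedge\cdots\wedge\vq_k^{n-k}$ by $\ll q_0^{1-k\lambda}$. Hence the orthogonal lattice in $\RR^{n-k+1}$ has covolume $\ll q_0^{1-k\lambda}$ and rank $n-2k$, so Minkowski's first theorem gives directly a vector $\va\in\ZZ^{n-k+1}$ with $\|\va\|_\infty\ll q_0^{(1-k\lambda)/(n-2k)}$. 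This $\va$ is already a polynomial of degree $\le n-k$ and is orthogonal to every $\vq_i^{n-k}$, which is exactly the statement $\vq\in L_\va$; the estimate $|\va\cdot(1,x,\dots,x^{n-k})|\ll\|\va\|_\infty q_0^{-1-\lambda}$ then follows as in~\eqref{eq1}, and one concludes $S_n^h(I,\lambda)\subset D_{n-k}\bigl(I,\tfrac{(n-2k)(1+\lambda)}{1-k\lambda}-1\bigr)$, after which Bernik's formula~\eqref{bernik} finishes. No Hausdorff--Cantelli counting is needed.
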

\proof We proceed in a similar way as in Theorem~\ref{th4}. Fix $k$
between 0 and $\min\{h,\frac{n-1}{2}\}$. For a given point
$\vq\in\ZZ^{n+1}$ denote by $\vq_i^{n-k}$ the vector $(q_i,q_{i+1},
\ldots, q_{i+n-k})$. Consider a point $\vq\in\ZZ^{n+1}$ of type $h$
that approximates the number $x\in S_n^h(I,\lambda)$. Let
$\LLL_\vq^k$ be the lattice generated by vectors $\vq_0^{n-k},
\ldots, \vq_k^{n-k}$. By the definition of the type of $\vq$, all
these vectors are linearly independent and therefore the covolume of
$\LLL_\vq^k$ is $||\vq_0^{n-k}\wedge \cdots\wedge \vq_k^{n-k}||_2$.
By~\cite[Proposition 4.3]{bad_bug_2020}, the absolute value of every
Pl\"ucker coordinate of this product is bounded from above by
$q_0^{1-k\lambda}$ multiplied by a some absolute constant.

Since the covolumes of a lattice and its dual counterparts coincide,
we get
$$
\covol (\LLL_\vq^{h\bot}) \ll q_0^{1-k\lambda}.
$$
Its dimension is $n-2k$. By Minkowski's Theorem, this implies that
there exists an integer vector $\va\in\langle \vq_0^{n-k}, \ldots,
\vq_k^{n-k}\rangle^\bot$ of length
$$
||\va||_\infty \ll q_0^{\frac{1-k\lambda}{n-2k}},
$$
i.e. $\vq\in L_\va$.

Now, as in the proof of Theorem~\ref{th4}, we have
$|q_0|\asymp||\vq||_\infty$ and compute
$$
|\va\cdot (1,x,x^2,\ldots, x^{n-k})| \ll
\frac{||\va||_\infty}{||\vq||_\infty^{1+\lambda}} \ll
||\va||_\infty^{1 - \frac{(n-2k)(1+\lambda)}{1-k\lambda}}.
$$
Finally, we derive that
$$
S_n^h(I,\lambda)\subset D_{n-k}\left(I,
\frac{(n-2k)(1+\lambda)}{1-k\lambda} -1\right)
$$
and Bernik's equation~\eqref{bernik} completes the proof.

\endproof

Notice that for all $h<k\le \frac{n-1}{2}$ one has
$$
h+2\le \frac{n+3}{2}\le n-k+1.
$$
On top of that, for $\lambda\le \frac{2}{n}$, the expression
$\frac{1-k\lambda}{n-2k}$ as a function of $k$, is monotonically increasing.
Therefore,
$$
\frac{(h+2)(1-h\lambda)}{(n-2h)(1+\lambda)}\le \min_{h<k\le
\frac{n-1}{2}}\left\{\frac{(n-k+1)(1-k\lambda)}{(n-2k)(1+\lambda)}\right\},
$$
and hence in view of~\eqref{eq25}, $\dim S_n^h(I,\lambda)$ satisfies
the same upper bounds~\eqref{th1_eq} for all $0\le k\le
\frac{n-1}{2}$. This finishes the proof of Theorem~\ref{th1}.

\section{Cutting the curve into pieces}\label{sec3}

From now on we focus on Theorem~\ref{th2}. By $Q^\vf_n(I, \lambda)$ we denote
the set of all $\vq\in\ZZ^{n+1}$ such that there exists $x\in I$ with
\begin{equation}\label{approx}
\max_{1\le i\le n} |q_0 f_i(x) - q_i| < ||\vq||_\infty^{-\lambda}.
\end{equation}
Sometimes it is convenient to write $\vq$ as a pair $(q_0,
\vq^+)\in\ZZ\times\ZZ^n$. Then~\eqref{approx} can be rewritten as
$||q_0\vf(x) - \vq^+||_\infty \ll ||\vq||_\infty^{-\lambda}$. By
$R^\vf(\vq)$ we denote the set of $x\in\RR$ that
satisfy~\eqref{approx} for a given $\vq\in\ZZ^{n+1}$. Surely, one
has $\diam R^\vf(\vq)\ll ||\vq||_\infty^{-1-\lambda}$. Notice that
$S^\vf_n(I,\lambda)$ can be interpreted as the set of all $x\in \RR$
such that the inequality~\eqref{approx} is satisfied for infinitely
many $\vq\in Q^\vf_n(I,\lambda)$. As with $S_n(I,\lambda)$, we omit
the superscripts in $Q_n(I,\lambda)$ and $R(\vq)$ when $\CCC$ is the
Veronese curve.

We split $Q^\vf_n(I,\lambda)$ into subsets $Q^\vf_n(I,\lambda,k)$ where
$k\in\NN$ and
$$
Q^\vf_n(I,\lambda,k):=\{\vq\in Q^\vf_n(I,\lambda): 2^k \le ||\vq||_\infty
<2^{k+1}\}.
$$
We also define
$$
S^\vf_n(I,\lambda, k):= \bigcup_{\vq\in Q^\vf_n(I,\lambda, k)} R^\vf(\vq).
$$
Notice that it is possible for $S_n^\vf(I,\lambda,k)$ to contain
points outside $I$. On the other hand, every term $R^\vf(\vq)$ in
the union $S_n^\vf(I,\lambda,k)$ has a non-empty intersection with
$I$. Since their diameters tend to zero as $k\to\infty$, any
$x\not\in I$ can belong to at most finitely many sets
$S_n^\vf(I,\lambda,k)$. Because of that and since each
$Q^\vf_n(I,\lambda, k)$ contains finitely many elements, we can
write $S^\vf_n(I,\lambda)$ as a limsup set:
$$
S^\vf_n(I,\lambda) = \limsup_{k\to\infty} S^\vf_n(I,\lambda,k).
$$

We further split the sets $Q_n^\vf(I,\lambda,k)$ into smaller
subsets. But before doing that, we need the following

\begin{lemma}\label{lem1}
Let $Q\in\RR^+$, $\vq\in\RR^{n+1}$ and $x\in I$ be such that $Q\le
||\vq||_\infty <2Q$ and
$$
||q_0 \vf(x) - \vq^+||_\infty\le ||\vq||_\infty^{-\lambda}.
$$
Then for all $x_0\in I\cap B(x,Q^{-\frac{1+\lambda}{2}})$ one has
\begin{equation}\label{lem1_eq}
\left\{\begin{array}{l} |q_0|\ll Q\\
|q_0x_0 - q_1|\ll Q^{\frac{1-\lambda}{2}};\\
|q_0(f_i(x_0)-x_0f_i'(x_0)) + q_1f_i'(x_0) - q_i| \ll
Q^{-\lambda},\qquad 2\le i\le n.
\end{array}
\right.
\end{equation}
\end{lemma}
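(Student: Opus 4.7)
The plan is to derive all three estimates from the single hypothesis $||q_0\vf(x)-\vq^+||_\infty\le Q^{-\lambda}$, together with first-order Taylor expansion of the coordinates $f_i$ at the nearby point $x_0$, exploiting the assumed uniform bounds on $\vf$, $\vf'$ and $\vf''$ on $I$. The first line $|q_0|\ll Q$ is immediate from $|q_0|\le ||\vq||_\infty<2Q$.

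For the second line I would use that in our local parametrisation $f_1(x)=x$, so $|q_0x-q_1|\le ||\vq||_\infty^{-\lambda}\le Q^{-\lambda}$ is already in hand. Splitting
$$
q_0x_0-q_1 = q_0(x_0-x) + (q_0x-q_1)
$$
and bounding $|q_0|\,|x_0-x|\ll Q\cdot Q^{-(1+\lambda)/2}=Q^{(1-\lambda)/2}$ gives the claim, with this term dominating $Q^{-\lambda}$ for every $\lambda\ge 0$.

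The third line carries the real weight. The key algebraic observation is
$$
q_0\bigl(f_i(x_0)-x_0f_i'(x_0)\bigr)+q_1f_i'(x_0)-q_i = \bigl(q_0f_i(x_0)-q_i\bigr) - f_i'(x_0)\bigl(q_0x_0-q_1\bigr),
$$
and neither summand on the right is small on its own: moving from $x$ to $x_0$ lets $|q_0f_i(x_0)-q_i|$ blow up to roughly $Q^{(1-\lambda)/2}$, far larger than the target $Q^{-\lambda}$. I would then Taylor expand $f_i(x_0)-f_i(x)=f_i'(x_0)(x_0-x)+O((x_0-x)^2)$ and substitute $q_0(x_0-x)=(q_0x_0-q_1)-(q_0x-q_1)$. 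The two $(q_0x_0-q_1)$ contributions cancel, reducing the expression to
$$
\bigl(q_0f_i(x)-q_i\bigr) - f_i'(x_0)\bigl(q_0x-q_1\bigr) + O\bigl(q_0(x_0-x)^2\bigr).
$$
The first two terms are $\ll Q^{-\lambda}$ by the hypothesis and the boundedness of $f_i'$, while the third is $\ll Q\cdot Q^{-(1+\lambda)}=Q^{-\lambda}$ by the choice of radius.

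The only real obstacle is spotting the rearrangement that forces the two large but unwanted terms to cancel. Once that identity is in hand, the remaining analysis is just the standard quadratic Taylor remainder together with the deliberate calibration of the radius $Q^{-(1+\lambda)/2}$, which is precisely the threshold at which the Taylor error matches the right-hand side $Q^{-\lambda}$.
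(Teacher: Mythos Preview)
Your argument is correct and essentially identical to the paper's: both use the second-order Taylor expansion of $f_i$ between $x$ and $x_0$, together with the substitution $q_0(x_0-x)=(q_0x_0-q_1)-(q_0x-q_1)$, so that the large $Q^{(1-\lambda)/2}$ contributions cancel and only terms of size $\ll Q^{-\lambda}$ remain. The paper merely organises the same computation in the reverse order, expanding $f_i(x)$ around $x_0$ first and then applying the triangle inequality.
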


\proof The first two inequalities immediately follow from the
relations between $\vq$ and $Q$ and the inclusion $x_0\in
B(x,Q^{-\frac{1+\lambda}{2}})$. For the last set of inequalities we
compute
$$
Q^{-\lambda}\gg |q_0f_i(x) - q_i| = \left|q_0\left(f_i(x_0) + (x -
x_0)f_i'(x_0) + \frac12 (x-x_0)^2 f_i''(\xi)\right) - q_i\right|
$$
$$
\ge |q_0(f_i(x_0) - x_0 f_i'(x_0)) + q_1f_i'(x_0) - q_i| -
|(q_1-q_0x)f_i'(x_0)| - \left|\frac12 q_0(x-x_0)^2
f_i''(\xi)\right|.
$$
One can easily check that the last two terms are $\ll Q^{-\lambda}$
and hence verify \eqref{lem1_eq}.
\endproof

Given $x_0\in I$ and $Q\in \RR^+$, we denote the box of all
$\vq\in\RR^{n+1}$ that satisfy~\eqref{lem1_eq} by $\Delta(x_0,Q)$.
An easy computation reveals
$$
\vol(\Delta(x_0, Q)) \asymp Q^{1+\frac{1-\lambda}{2} - (n-1)\lambda} = Q^{\frac{3-(2n-1)\lambda}{2}}.
$$
Therefore for $\lambda< \frac{3}{2n-1}$ this volume tends to
infinity as $Q\to\infty$.

By Lemma~\ref{lem1}, a neighbourhood of $\vf(x)$, $x\in I$ can be
covered by the boxes $\Delta(x_1, 2^k), \ldots$, $\Delta(x_d,2^k)$
where $d\ll 2^{\frac{1+\lambda}{2}k}$ such that $Q^\vf_n(I,\lambda,
k)$ is contained in the union of sets $Q^\vf_n(I, \lambda, k, m)$,
$1\le m\le d$ where
\begin{equation}\label{def_m}
Q^\vf_n(I, \lambda, k, m) := \Delta(x_m, 2^k)\cap \ZZ^{n+1}.
\end{equation}

Let $\tau_1(m)$, $\tau_2(m), \ldots,$ $\tau_{n+1}(m)$ be the successive
minima of $\Delta_m:=\Delta(x_m, 2^k)$ on $\ZZ^{n+1}$. By Minkowski's second
theorem, we know that
\begin{equation}\label{eq26}
\tau_1(m)\tau_2(m)\cdots \tau_{n+1}(m)\asymp
\frac{\vol(\RR^{n+1}/\ZZ^{n+1})}{\vol \Delta_m} \asymp
2^{\frac{(2n-1)\lambda - 3}{2}k}.
\end{equation}
We also know that if $\tau_{n+1}(m) < 1$, i.e. $\Delta_m$ contains
$n+1$ linearly independent integer vectors, then
\begin{equation}\label{eq2}
\#Q^\vf_n(I, \lambda, k, m) = \#(\Delta_m\cap \ZZ^{n+1})\ll
2^{\frac{3-(2n-1)\lambda}{2}k}.
\end{equation}
By $\#^*Q^\vf_n(I, \lambda, k, m)$ we denote the number of primitive
points in $Q^\vf_n(I, \lambda, k, m)$, i.e. points $\vq\in\ZZ^{n+1}$
such that $q_0>0$ and $\gcd(q_0, q_1,\ldots, q_n)=1$. We split the
sets $Q^\vf_n(I, \lambda, k, m)$ into two types: type~1 ones satisfy
$$
\#^*Q^\vf_n(I, \lambda, k, m)\ll 2^{\frac{3-(2n-1)\lambda}{2}k},
$$
where the implied absolute constant is the same as in the
bound~\eqref{eq2}. The other sets are called type~2. That guarantees
that for $Q_n^\vf(I,\lambda, k,m)$ of type~2 the corresponding
minimum $\tau_{n+1}(m)>1$. Then we define
$$
Q^\vf_{n,1}(I, \lambda, k):= \bigcup_{Q_n^\vf
(I,\lambda,k,m)\mathrm{\; is\; of\; type\;}1} Q_n^\vf(I,\lambda,k,m)
$$
and
$$
Q_{n,2}^\vf (I,\lambda,k):= Q_n^\vf(I,\lambda,k)\setminus
Q_{n,1}^\vf (I,\lambda,k).
$$
%

Respectively, we split $S^\vf_n(I,\lambda)$ into the subsets
$S^\vf_{n,i}(I,\lambda)$, $i\in\{1,2\}$ where
$$
S^\vf_{n,i}(I,\lambda) = \limsup_{k\to\infty} S^\vf_{n,i}(I,\lambda, k):=
\limsup_{k\to\infty} \bigcup_{\vq\in Q^\vf_{n,i}(I,\lambda, k)} R^\vf(\vq).
$$

\begin{lemma}\label{lem2}
One has
$$
\dim S^\vf_{n,1}(I,\lambda)\le \frac{2-(n-1)\lambda}{1+\lambda}.
$$
\end{lemma}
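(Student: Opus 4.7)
The strategy is a Hausdorff--Cantelli estimate on the limsup decomposition $S^\vf_{n,1}(I,\lambda)=\limsup_k S^\vf_{n,1}(I,\lambda,k)$. For each $k$ I use the natural cover by the sets $R^\vf(\vq)$, $\vq\in Q^\vf_{n,1}(I,\lambda,k)$. Since the local parametrisation has $f_1(x)=x$ and $I$ is separated from zero, two points $x,y\in R^\vf(\vq)$ give $|q_0|\,|x-y|\ll\|\vq\|_\infty^{-\lambda}$, so $\diam R^\vf(\vq)\ll\|\vq\|_\infty^{-(1+\lambda)}\asymp 2^{-(1+\lambda)k}$.

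The counting is essentially handed to us by the setup preceding the lemma. The relevant neighbourhood of $\vf(I)$ is covered by $d\ll 2^{(1+\lambda)k/2}$ boxes $\Delta_m$, and the defining property of type one yields $\#^*Q^\vf_n(I,\lambda,k,m)\ll 2^{(3-(2n-1)\lambda)k/2}$ for each contributing $m$. Multiplying,
\begin{equation*}
\#\bigl\{\vq\in Q^\vf_{n,1}(I,\lambda,k):\vq\text{ primitive}\bigr\}\;\ll\;2^{(1+\lambda)k/2}\cdot 2^{(3-(2n-1)\lambda)k/2}\;=\;2^{(2-(n-1)\lambda)k}.
\end{equation*}

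Non-primitive vectors $\vq=r\vq^*$ are handled by a standard rescaling: the inequality defining $R^\vf(\vq)$ rewrites as $\|q_0^*\vf(x)-\vq^{*+}\|_\infty\le r^{-1-\lambda}\|\vq^*\|_\infty^{-\lambda}$, a strict improvement over the generic approximation at the scale of $\vq^*$. Hence $R^\vf(r\vq^*)\subseteq R^\vf(\vq^*)$, and summing the contribution of multipliers $r\ge 2$ against the factor $r^{-s(1+\lambda)}$ in the $s$-sum produces a geometric series that can be absorbed into a constant multiple of the primitive estimate.

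Putting this together with the diameter bound, for any $s>\frac{2-(n-1)\lambda}{1+\lambda}$ we obtain
\begin{equation*}
\sum_k\sum_{\vq\in Q^\vf_{n,1}(I,\lambda,k)}(\diam R^\vf(\vq))^s\;\ll\;\sum_k 2^{(2-(n-1)\lambda)k}\cdot 2^{-s(1+\lambda)k}\;<\;\infty,
\end{equation*}
so the Hausdorff--Cantelli lemma gives $\dim S^\vf_{n,1}(I,\lambda)\le s$, and letting $s\downarrow\frac{2-(n-1)\lambda}{1+\lambda}$ finishes the proof. The genuinely delicate step is the non-primitive reduction: the type-one condition controls $\#^*$ but not $\#$, so one must verify that scaled copies $r^{-1}\Delta_m$ of type-one boxes cannot blow up the total count, and it is precisely the strict gain $r^{-1-\lambda}$ in the approximation quality of $\vq^*$ that makes the contribution of multiples geometrically summable.
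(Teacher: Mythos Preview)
Your argument is the paper's argument: Hausdorff--Cantelli on the natural cover $\{R^\vf(\vq):\vq\in Q^\vf_{n,1}(I,\lambda,k)\}$, with the product of the box count $\ll 2^{(1+\lambda)k/2}$ and the per-box count $\ll 2^{(3-(2n-1)\lambda)k/2}$ producing the exponent $2-(n-1)\lambda$. The paper's proof is in fact briefer than yours on the point you flag as delicate---it simply writes ``the number of elements $\vq$ inside each $\Delta_j$ is $\ll 2^{\frac{3-(2n-1)\lambda}{2}k}$'' and never separates primitive from non-primitive, even though the type-1 condition is phrased in terms of $\#^*$.

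Your geometric-series absorption of multipliers is the right instinct, but as written it has a loose end: when you reduce $\vq=r\vq^*\in Q^\vf_{n,1}(I,\lambda,k)$ to its primitive part $\vq^*$, you then appeal to ``the primitive estimate'', yet that estimate was established only for primitive vectors lying in type-1 boxes at their own level, and nothing prevents $\vq^*$ from sitting in a type-2 box at level $k'=k-\lfloor\log_2 r\rfloor$. One way to close this is to observe that the boxes $\Delta_m$ are centrally symmetric, so $\vq^*=r^{-1}\vq\in r^{-1}\Delta_m\subset\Delta_m$; hence every primitive part that arises is already one of the $\ll 2^{(3-(2n-1)\lambda)k/2}$ primitive points of \emph{that same} box at level $k$, and the count of all integer points of $\Delta_m$ with norm in $[2^k,2^{k+1})$ can be controlled through these representatives. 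The gap is minor and the lemma stands; your write-up is, if anything, more scrupulous than the paper's own.
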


\proof We consider the natural cover of the set
$S^\vf_{n,1}(I,\lambda)$ by $R^\vf(\vq)$ where $\vq\in
Q^\vf_{n,1}(I,\lambda, k)$ for $k\ge K_0$ sufficiently large. In
view of $\diam R^\vf(\vq)\ll ||\vq||_\infty ^{-1-\lambda}$, we get
that the value of the corresponding Hausdorff $s$-sum tends to zero
as $K_0\to\infty$ as soon as the series
$$
\sum_{k=1}^\infty \sum_{\vq\in Q^\vf_{n,1}(I,\lambda,k)\atop
\vq\;\mathrm{is\; primitive}} 2^{-(1+\lambda)sk}
$$
converges.

Notice that the number of boxes $\Delta_j$ (and in turn sets
$Q_n^\vf(I,\lambda,k,j)$) that correspond to each
$Q^\vf_{n,1}(I,\lambda,k)$ is bounded from above by $\ll
2^{\frac{1+\lambda}{2}k}$. Also each $\vq\in Q_{n,1}^\vf (I,
\lambda,k)$ has to belong to one of those $Q_n^{\vf}(I,\lambda,k,j)$
of type~1 that contain $\ll 2^{\frac{3-(2n-1)\lambda}{2}k}$
primitive points. In total, we get the series
$$
\sum_{k=1}^\infty 2^{\left(\frac{1+\lambda}{2} +
\frac{3-(2n-1)\lambda}{2} - (1+\lambda)s\right)k}.
$$
This series converges as soon as
$$
\frac{1+\lambda}{2} + \frac{3-(2n-1)\lambda}{2} - (1+\lambda)s <
0\quad \Leftrightarrow\quad s > \frac{2-(n-1)\lambda}{1+\lambda}.
$$
\endproof

Now we focus on the sets $Q^\vf_{n,2}(I,\lambda, k)$. Consider the
set $Q^\vf_n(I,\lambda, k,m)$ of type~2. By~\eqref{eq2}, we then
must have $\tau_{n+1}(m)>1$, i.e. this set belongs to a proper
subspace of $\RR^{n+1}$. By the height of a rational subspace $\SSS$
we define the volume of the fundamental domain of the lattice
$\LLL:=\SSS\cap \ZZ^{n+1}$. In other words, if the lattice $\LLL$ is
generated by $\vv_1, \vv_2, \ldots, \vv_d$ then $H(\SSS):=
||\vv_1\wedge \vv_2\wedge\ldots \wedge \vv_d||_2$. We refer the
reader to~\cite[Chapter 1, \S 5]{schmidt_1991} for more details
about this notion.

\begin{lemma}\label{lem3}
Let $Q^\vf_n(I,\lambda, k,m)$ be of type two. Suppose that the dimension of
$\Span (Q^\vf_n(I,\lambda, k,m))$ is $d$. Then
$$
H(\Span (Q^\vf_n(I,\lambda, k,m)))\ll 2^{(n-d+1)\lambda k}.
$$
Moreover, if $\tau_{n+1}(m) = 2^{\delta k}$ then $\Span (Q^\vf_n(I,\lambda,
k,m))$ belongs to an $n$-dimensional rational subspace of height $\ll
2^{(\lambda - \delta)k}$.
\end{lemma}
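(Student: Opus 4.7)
The plan is to apply Minkowski's second theorem to the parallelepiped $\Delta_m = \Delta(x_m, 2^k)$. This yields linearly independent integer vectors $\vu_1, \ldots, \vu_{n+1}$ with $\vu_i \in \tau_i(m)\,\Delta_m$, and successive minima $\tau_i(m)$ satisfying $\prod_{i=1}^{n+1}\tau_i(m) \asymp \vol(\Delta_m)^{-1} = 2^{((2n-1)\lambda-3)k/2}$. Since $Q^\vf_n(I,\lambda,k,m) = \Delta_m\cap\ZZ^{n+1}$ spans a $d$-dimensional subspace $V$, we must have $\tau_d(m) \le 1 < \tau_{d+1}(m)$; in particular, $\vu_1,\ldots,\vu_d$ all lie in $\Delta_m$ and span $V$. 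Consequently $H(V) \le \|\vu_1 \wedge \cdots \wedge \vu_d\|_2$.

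The heart of the argument is estimating this wedge product in terms of the edge lengths of $\Delta_m$. The defining linear system in Lemma~\ref{lem1} has coefficient matrix lower triangular with diagonal $(1,-1,\ldots,-1)$, hence determinant $\pm 1$, so $\Delta_m$ is a parallelepiped with edge lengths $\alpha_0 = 2^k$, $\alpha_1 = 2^{(1-\lambda)k/2}$, $\alpha_2 = \cdots = \alpha_n = 2^{-\lambda k}$. Expanding each $\vu_i$ in this edge basis and applying Hadamard on the resulting $d \times d$ minors gives
$$\|\vu_1\wedge\cdots\wedge\vu_d\|_2 \;\ll\; \prod_{i=1}^d \tau_i(m) \cdot \max_{|S|=d}\prod_{j\in S}\alpha_j.$$
For $d \ge 2$ the maximum equals $\alpha_0\alpha_1\alpha_2^{d-2} = 2^{(3-(2d-3)\lambda)k/2}$, while Minkowski's relation together with $\tau_i(m) > 1$ for $i > d$ yields $\prod_{i \le d}\tau_i(m) \ll 2^{((2n-1)\lambda-3)k/2}$. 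Multiplying collapses to
$$H(V) \;\ll\; 2^{((2n-1)\lambda - (2d-3)\lambda)k/2} \;=\; 2^{(n-d+1)\lambda k}.$$
The boundary case $d = 1$ is immediate: $H(V) = \|\vu_1\|_2 \ll \alpha_0 = 2^k \le 2^{n\lambda k}$ using $\lambda \ge 1/n$.

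For the moreover statement, set $V' := \Span(\vu_1,\ldots,\vu_n)$, an $n$-dimensional rational subspace containing $V$ (as $d \le n$ in the type-two case). Since $\ZZ\vu_1 + \cdots + \ZZ\vu_n$ is a sublattice of $V' \cap \ZZ^{n+1}$, we get $H(V') \le \|\vu_1\wedge\cdots\wedge\vu_n\|_2$. The same wedge-product bound now involves the maximal $n$-fold product of the $\alpha_j$'s, which omits only the smallest entry and equals $\vol(\Delta_m)/\alpha_n = 2^{(3-(2n-3)\lambda)k/2}$, while the relevant product of minima becomes $\prod_{i=1}^n \tau_i(m) = \prod_{i=1}^{n+1}\tau_i(m)/\tau_{n+1}(m) \asymp 2^{((2n-1)\lambda-3)k/2 - \delta k}$. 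Multiplying yields $H(V') \ll 2^{(\lambda-\delta)k}$, as required. The main technical obstacle is the careful bookkeeping of which $\alpha_j$'s dominate in the wedge-product expansion and pairing this correctly with the Minkowski product identity; everything else reduces to routine geometry of numbers, with the implied constants depending only on $n$, $\vf$, and $I$.
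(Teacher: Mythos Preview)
Your proof is correct and follows essentially the same approach as the paper's. Both arguments use Minkowski's second theorem on $\Delta_m$, identify the first $d$ minimum vectors as spanning $V$, and bound $H(V)$ by $\|\vu_1\wedge\cdots\wedge\vu_d\|_2$; the only cosmetic difference is that the paper phrases the wedge-product estimate as $\prod_{i\le d}\tau_i(m)\cdot\vol(V\cap\Delta_m)$ and then bounds the section volume by the product of the $d$ largest edge lengths, whereas you expand the Pl\"ucker coordinates directly to reach the same quantity $\prod_{i\le d}\tau_i(m)\cdot\max_{|S|=d}\prod_{j\in S}\alpha_j$. Your separate treatment of $d=1$ is unnecessary (the paper observes that type two forces $d>1$ in the relevant range of $\lambda$), but it is harmless.
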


\proof Since $Q^\vf_n(I,\lambda, k,m)$ is of type two, we must have
at least one of the successive minima $\tau_i(m)$ bigger than 1.
Suppose $d$ of them are at most 1 ($d<n+1$), i.e. $\tau_1(m)\le
\cdots\le \tau_d(m)\le 1< \tau_{d+1}(m)\le \cdots \le
\tau_{n+1}(m)$. Obviously, $d>1$ because otherwise all integer
points $\vq\in Q^\vf_n(I,\lambda, k,m)$ are scalar multiples of one
point $\vq_0$ and therefore $Q^\vf_n(I,\lambda, k,m)$ is not of
type~2. Denote by $\vv_i$ the shortest vector that corresponds to
the successive minimum $\tau_i(m)$. Then $Q^\vf_n(I,\lambda, k,m)$
lies in a $d$-dimensional rational subspace which contains the
lattice with generators $\vv_1, \ldots, \vv_d$.

From Minkowski's second theorem, similarly to~\eqref{eq26}, we get
that $\tau_1(m)\cdots \tau_d(m)\ll 2^{\frac{(2n-1)\lambda -
3}{2}k}$. We also have
$$
\vol(\Span (Q^\vf_n(I,\lambda, k,m))\cap \Delta_m) \ll 2^k\cdot
2^{\frac{1-\lambda}{2}k}\cdot 2^{-(d-2)\lambda k} =
2^{\frac{3-(2d-3)\lambda}{2}k}.
$$
Therefore
$$
\left|\left|\bigwedge_{i=1}^d \vv_i\right|\right|_2\ll \prod_{i=1}^d
\tau_i(m)\cdot  \vol(\Span (Q^\vf_n(I,\lambda, k,m))\cap \Delta_m)
\ll2^{(n-d+1)\lambda k}.
$$

To get the second statement of the lemma, we consider the span of $\vv_1,
\ldots, \vv_n$ and notice that
$$
\tau_1(m)\cdots \tau_n(m)\asymp 2^{\left(\frac{(2n-1)\lambda - 3}{2}
- \delta\right)k}.
$$
Then proceeding as above, we get $||\vv_1\wedge\ldots\wedge
\vv_n||_2 \ll 2^{(\lambda - \delta)k}$. Finally, the observation
that the $\Span (Q^\vf_n(I,\lambda, k,m))$ belongs to the span of
$\vv_1, \ldots, \vv_n$ finishes the proof. \endproof

Given the box $\Delta_m$, denote by $\delta(m)$ the value that satisfies
$$\tau_{n+1}(m) =: 2^{\delta(m) k}.$$ Note that for the boxes
$Q_n^\vf(I,\lambda,k,m)$ of type two, $\delta(m)>0$.

\begin{lemma}\label{lem4}
The number of points in $\Delta_m\cap \ZZ^{n+1}$ satisfies
$$
\#(\Delta_m\cap \ZZ^{n+1}) \ll 2^{\left(\frac{3 - (2n-1)\lambda}{2}
+ (n-1)\delta(m)\right)k}
$$
\end{lemma}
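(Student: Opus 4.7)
The plan is to combine the classical bound on the number of lattice points in a symmetric convex body, in terms of its successive minima, with Minkowski's second theorem and the type-two hypothesis already exploited in Lemma~\ref{lem3}.

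Let $\tau_1(m)\le\tau_2(m)\le\cdots\le\tau_{n+1}(m)=2^{\delta(m)k}$ be the successive minima of $\Delta_m$ with respect to $\ZZ^{n+1}$, and let $d$ be the largest index with $\tau_d(m)\le 1$. Since $\Delta_m$ is of type two we have $\tau_{n+1}(m)>1$, so $d\le n$; and the argument in the proof of Lemma~\ref{lem3} (if $d=1$ then $Q_n^\vf(I,\lambda,k,m)$ consists only of multiples of a single primitive vector, contradicting type two) gives $d\ge 2$. This lower bound on $d$ is the key point from which the exponent $n-1$ — rather than $n$ — will come.

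Next, I would invoke the standard successive-minima estimate: if $\vv_1,\dots,\vv_{n+1}$ is a Minkowski-reduced basis realising the $\tau_i(m)$, then every integer point in $\Delta_m$ has zero coefficient along $\vv_{d+1},\dots,\vv_{n+1}$, and within the $d$-dimensional sublattice the number of such points is $\ll\prod_{i=1}^d \tau_i(m)^{-1}$ (this is the classical Minkowski/Davenport bound). To bound $\prod_{i=1}^d\tau_i(m)$ from below I would combine~\eqref{eq26}, which gives $\prod_{i=1}^{n+1}\tau_i(m)\asymp 2^{((2n-1)\lambda-3)k/2}$, with the trivial upper bound $\prod_{i=d+1}^{n+1}\tau_i(m)\le \tau_{n+1}(m)^{n+1-d}=2^{\delta(m)(n+1-d)k}$, obtaining
\[
\prod_{i=1}^d\tau_i(m)\gg 2^{\frac{(2n-1)\lambda-3}{2}k-\delta(m)(n+1-d)k}.
\]

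Substituting back yields
\[
\#(\Delta_m\cap\ZZ^{n+1})\ll 2^{\frac{3-(2n-1)\lambda}{2}k+\delta(m)(n+1-d)k},
\]
and the estimate $d\ge 2$, i.e.\ $n+1-d\le n-1$, converts this into the stated bound. I do not expect a serious obstacle here: the only subtle point is remembering to use $d\ge 2$ (which is where the type-two hypothesis is essential), since the naive Minkowski count would give exponent $n\delta(m)$, which is too weak for the subsequent dimension computations.
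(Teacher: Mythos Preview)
Your argument is correct and is essentially the same as the paper's own proof: both use the standard count $\#(\Delta_m\cap\ZZ^{n+1})\asymp\prod_{\tau_i(m)<1}\tau_i(m)^{-1}$, rewrite it via Minkowski's second theorem~\eqref{eq26} as $2^{\frac{3-(2n-1)\lambda}{2}k}\prod_{\tau_i(m)\ge 1}\tau_i(m)$, and then bound the last product by $\tau_{n+1}(m)^{n-1}$ using that at most $n-1$ successive minima exceed~$1$. You are right that the type-two hypothesis (giving $d\ge 2$, as in the proof of Lemma~\ref{lem3}) is implicitly being used here to obtain the exponent $n-1$ rather than $n$; the paper's statement of the lemma omits this hypothesis, but its proof invokes it just as you do.
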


\proof We estimate the number of integer points in $\Delta_m\cap
\ZZ^{n+1}$ as
$$
\# (\Delta_m\cap \ZZ^{n+1}) \asymp \prod_{\tau_i(m)<1}
\tau_i^{-1}(m) \stackrel{\eqref{eq26}}\asymp
2^{\frac{3-(2n-1)\lambda}{2}k} \prod_{\tau_i(m)\ge 1} \tau_i(m).
$$
To finish the proof, we notice that there are at most $n-1$ successive minima
$\tau_i(m)$ which are at least 1 and that for all $1\le i\le n+1$,
$\tau_i(m)\le \tau_{n+1}(m)$. \endproof

Denote by $\PPP(m)$ a hyperplane of the smallest possible height
that contains all integer points from $\Delta_m$ and denote its
equation by $\va(m)\cdot \vx = 0$. Let $\sigma(m)$ be such that
$$H(\PPP(m)) =: 2^{(\lambda - \sigma(m))k}.$$ Lemma~3 tells us that
$\sigma(m)\ge \delta(m)$, however in some cases it can be made
substantially bigger. For a fixed $\sigma>0$, denote by
$Q_{n,2}^{\vf, \sigma} (I,\lambda, k)$ the set of $\vq\in\ZZ^{n+1}$
from all the boxes $Q^\vf_n(I,\lambda,k,m)$ of type~2 such that
$\sigma(m)\ge \sigma$, i.e.
$$
Q_{n,2}^{\vf,\sigma}(I,\lambda, k):= \bigcup_{\sigma(m)\ge
\sigma\atop Q_n^\vf(I,\lambda,k,m) \mathrm{\;is\; of\; type\; 2}}
Q_{n}^\vf (I,\lambda,k,m)
$$

 Also define
$$
S_{n,2}^{\vf, \sigma}(I,\lambda) = \limsup_{k\to\infty}
S_{n,2}^{\vf, \sigma}(I,\lambda, k):= \limsup_{k\to\infty} \bigcup_{\vq\in
Q_{n,2}^{\vf, \sigma}(I,\lambda, k)} R^\vf(\vq).
$$

\begin{proposition}\label{prop1}
Suppose that for any rational hyperplane $\HHH\subset \RR^{n+1}$,
$\dim ((1,\vf(I))\cap \HHH)=0$. Then
$$
\dim S_{n,2}^{\vf, \sigma}(I,\lambda) \le \dim D^\vf_n\left(I,
\frac{1+\lambda}{\lambda - \sigma}-1\right).
$$
\end{proposition}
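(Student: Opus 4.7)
The plan is to adapt the dual-vector argument from Theorem~\ref{th4}, with the crucial modification that the dual vector now arises for free from the structure of a type-two box rather than being produced by Minkowski's theorem. By the definition of $\sigma(m)$, every integer point of the box $\Delta_m$ lies on a rational hyperplane $\PPP(m)$ of height $\ll 2^{(\lambda-\sigma(m))k} \le 2^{(\lambda-\sigma)k}$. Let $\va(m) \in \ZZ^{n+1}$ denote its primitive normal, so that $||\va(m)||_\infty \ll 2^{(\lambda-\sigma)k}$ and $\va(m)\cdot \vq = 0$ for every $\vq \in Q^\vf_n(I,\lambda,k,m)$.

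First, I would discard from $I$ the zero-dimensional set of those $x$ for which $(1,\vf(x))$ lies on some rational hyperplane; this is harmless by the hypothesis of the proposition. For each $\vq$ witnessing $x \in S_{n,2}^{\vf,\sigma}(I,\lambda,k)$, the orthogonality $\va(m)\cdot \vq = 0$, the approximation $||q_0\vf(x)-\vq^+||_\infty \ll 2^{-\lambda k}$, and the normalization $|q_0|\asymp ||\vq||_\infty$ from the choice of $I$ combine to give
$$|\va(m)\cdot(1,\vf(x))| = \left|q_0^{-1}\sum_{i=1}^n a_i(m)(q_0 f_i(x)-q_i)\right| \ll \frac{||\va(m)||_\infty}{||\vq||_\infty^{1+\lambda}} \ll 2^{-(1+\sigma)k}.$$
Since $||\va(m)||_\infty \le 2^{(\lambda-\sigma)k}$, this rearranges to $|\va(m)\cdot(1,\vf(x))| \ll ||\va(m)||_\infty^{-(1+\sigma)/(\lambda-\sigma)}$, precisely the dual inequality with exponent $w := \frac{1+\lambda}{\lambda-\sigma}-1$. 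Writing $\va(m) = (-p,\vq')$ and observing that $|p| \ll ||\vq'||_\infty$ (forced by the smallness of $|\vq'\cdot\vf(x)-p|$ together with the boundedness of $\vf$), I recover the form of the inequality in the definition of $D^\vf_n(I,w)$.

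The one delicate point, which I regard as the main obstacle, is to guarantee that infinitely many genuinely distinct vectors $\va(m)$ arise along the witness subsequence; a priori many different $\vq$ at the same scale $2^k$ can share the same $\va(m)$, and even across scales the $\va(m)$ could in principle repeat. If only finitely many values of $\va(m)$ occurred overall, some fixed primitive $\va^*$ would satisfy $|\va^*\cdot(1,\vf(x))| \ll 2^{-(1+\sigma)k}$ for arbitrarily large $k$, forcing $\va^*\cdot(1,\vf(x))=0$; this would place $(1,\vf(x))$ on a rational hyperplane, contradicting the removal performed in the first step. Hence infinitely many distinct $\va(m) \in \ZZ^{n+1}$ satisfy the dual inequality, so $x \in D^\vf_n(I,w)$ up to a set of Hausdorff dimension zero, which yields the proposition.
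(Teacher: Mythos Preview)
Your proof is correct and follows essentially the same approach as the paper's: both use the hyperplane $\PPP(m)$ of height $\ll 2^{(\lambda-\sigma)k}$ supplied by the type-two structure, plug its normal $\va(m)$ into the identity from Theorem~\ref{th4} to obtain $|\va(m)\cdot(1,\vf(x))|\ll ||\va(m)||_\infty^{1-\frac{1+\lambda}{\lambda-\sigma}}$, and then invoke the hypothesis on rational hyperplanes to discard the zero-dimensional set where only finitely many distinct $\va(m)$ could occur. Your write-up is in fact slightly more explicit than the paper's on two points the paper leaves implicit: the reason why finitely many $\va(m)$ would force $(1,\vf(x))$ onto a rational hyperplane, and the observation that the constant term of $\va(m)$ is dominated by the rest so that $||\va(m)||_\infty$ can be replaced by $||\vq'||_\infty$ in the dual inequality.
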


\proof We proceed as in the proof of Theorem~\ref{th1}. By
definition, for fixed $m$ such that $Q^\vf_n(I,\lambda,k,m) \subset
Q_{n,2}^{\vf,\sigma}(I,\lambda,k)$ all points $\vq\in
Q^\vf_n(I,\lambda,k,m)$ lie on a hyperplane $\va\cdot \vx=0$ with
$||\va||_\infty \ll 2^{(\lambda-\sigma)k}\asymp
||\vq||_\infty^{\lambda-\sigma}$. Then by analogy with the
inequality~\eqref{eq1}, for any $\vq\in Q^\vf_n(I,\lambda, k,m)$ and
any $x\in R^\vf(\vq)$ we get
\begin{equation}\label{eq3}
|\va\cdot (1,\vf(x))|\ll ||\va||_\infty ^{1 - \frac{1+\lambda}{\lambda
- \sigma}}.
\end{equation}

Now consider $x\in S_{n,2}^{\vf, \sigma}(I,\lambda)$. We have shown that such
$x$ must satisfy~\eqref{eq3} infinitely often. By removing the set of the
Hausdorff dimension zero, we make sure that~\eqref{eq3} is satisfied for
infinitely many distinct vectors $\va$. That implies that $S_{n,2}^{\vf,
\sigma} (I,\lambda) \subset D^\vf_n\left(I,\frac{1+\lambda}{\lambda -
\sigma}-1\right)$ and the proposition follows immediately. \endproof

Set $\sigma_0 = \frac{2(n\lambda-1)}{n+1}$. Assuming that $\dim
D^\vf_n(I, w) \le \frac{n+1}{w+1}$, which is the case for the
Veronese curve, Proposition~\ref{prop1} implies
$$
\dim S_{n,2}^{\vf, \sigma_0} (I,\lambda) \le
\frac{2-(n-1)\lambda}{1+\lambda}.
$$
We say that the box $Q^\vf_n(I,\lambda, k,m)$ is of type~3 if it is
of type~2 and satisfies $\sigma(m)< \sigma_0$. Analogously as for
$S_{n,2}^{\vf,\sigma}(I,\lambda)$, we define the set
$S_{n,3}^\vf(I,\lambda)$ as the set of $x\in I$ that lie in
$R^\vf(\vq)$ for infinitely many $\vq$ from the boxes of type~3.
that is
$$
S_{n,3}^\vf(I,\lambda) = \limsup_{k\to\infty} S_{n,3}^{\vf,
\sigma}(I,\lambda, k):= \limsup_{k\to\infty} \bigcup_{\vq\in
Q_{n,2}^{\vf}(I,\lambda, k)\setminus
Q_{n,2}^{\vf,\sigma_0}(I,\lambda,k)} R^\vf(\vq).
$$

For a given box $\Delta_m$ consider $\eta\ge 0$ such that there
exists a smaller box inside $\Delta_m$ defined by the inequalities
$$
\Delta^*(x,2^k):=\left\{ \vq\in \RR^{n+1}: \begin{array}{l} |q_0|\ll
2^k;\\
|q_0x - q_1|\ll 2^{\left(\frac{1-\lambda}{2} - \eta\right)k};\\
|q_0(f_i(x) - xf'_i(x)) + q_1f_i'(x) - q_i| \ll 2^{-\lambda k}
\end{array}\right\}
$$
which contains all integer points of $\Delta_m$. By $\eta^*(m)$ we define the
supremum of $\eta$ with this property. Here the implied constants in the
inequalities are the same as in~\eqref{lem1_eq}. Then the same computations
as in Lemma~\ref{lem3} imply
\begin{equation}\label{eq4}
||\va||_\infty \ll 2^{(\lambda - \eta^*(m) - \delta(m))k}.
\end{equation}
In particular, for boxes of type~3 we must have
$\eta^*(m)+\delta(m)\le \sigma_0$.

Before stating the next lemma, recall that the centres $x_m$ of
$\Delta_m$ are defined together with sets $Q_n^\vf(I,\lambda,k,m)$
in~\eqref{def_m}.

\begin{lemma}\label{lem6}
Suppose that $Q^\vf_n(I,\lambda, k,m)$ is of type~3. Then there exists an
interval $J = J(m)\subset B(x_m,2^{-\frac{1+\lambda}{2}k})$ such that
\begin{equation}\label{lem6_eq1}
|J| \asymp 2^{\left(-\frac{1+\lambda}{2}-\eta^*(m)\right)k},
\end{equation}
and that for all $x\in J$,
\begin{equation}\label{lem6_eq2}
|\va(m)\cdot (1,\vf(x))|\ll ||\va(m)||_\infty 2^{-(1+\lambda)k}.
\end{equation}
Moreover, for all $\vq\in Q^\vf_n(I,\lambda, k,m)$ we have
$R^\vf(\vq)\subset J$.
\end{lemma}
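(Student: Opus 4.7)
The plan is to exhibit $J$ explicitly and verify the four claimed properties in turn, with the last (the uniform smallness of $\va(m)\cdot(1,\vf(x))$ over the whole of $J$) being the main technical point. I would choose a primitive integer vector $\vq^{(1)}\in Q^\vf_n(I,\lambda,k,m)$ with $R^\vf(\vq^{(1)})\neq\emptyset$ nearly realising the supremum defining $\eta^*(m)$, so that $|\xi^{(1)}|:=|q_1^{(1)}-q_0^{(1)}x_m|\asymp 2^{((1-\lambda)/2-\eta^*(m))k}$. By the setup of the paper, any such $\vq$ satisfies $q_0\asymp ||\vq||_\infty \asymp 2^k$, and therefore $x^{(1)}:=q_1^{(1)}/q_0^{(1)}$ lies at distance $\asymp 2^{(-(1+\lambda)/2-\eta^*(m))k}$ from $x_m$. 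I would then take $J$ to be the interval of length $2C_0\cdot 2^{(-(1+\lambda)/2-\eta^*(m))k}$ centred at $x^{(1)}$, for an absolute constant $C_0$ to be fixed.

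The inclusion $J\subset B(x_m,2^{-(1+\lambda)/2\,k})$ (valid for $k$ large since $\eta^*(m)\ge 0$) and the length asymptotic \eqref{lem6_eq1} are then immediate. For the ``moreover'' statement, any $\vq\in Q^\vf_n(I,\lambda,k,m)$ with non-empty $R^\vf(\vq)$ belongs to $\Delta^*(x_m,2^k)$ by definition of $\eta^*(m)$, so $|q_1/q_0-x_m|\ll 2^{(-(1+\lambda)/2-\eta^*(m))k}$; since $\diam R^\vf(\vq)\ll 2^{-(1+\lambda)k}$ is much smaller than $|J|$, choosing $C_0$ large enough yields $R^\vf(\vq)\subset J$.

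For \eqref{lem6_eq2}, write $\Phi(x):=\va(m)\cdot(1,\vf(x))$. Starting from $\va(m)\cdot\vq=0$, substituting $q_1=q_0x_m+\xi$ and $q_i=q_0f_i(x_m)+\xi f_i'(x_m)+O(2^{-\lambda k})$ (from the defining inequalities of $\Delta^*$) produces the master identity
$$
q_0 A + \xi B = O(||\va(m)||_\infty 2^{-\lambda k}),
$$
where $A:=\va(m)\cdot(1,\vf(x_m))$ and $B:=\va(m)\cdot(0,\vf'(x_m))$. Applied at $\vq^{(1)}$ and divided by $q_0^{(1)}$, combined with the second-order Taylor remainder, it yields $|\Phi(x^{(1)})|\ll ||\va(m)||_\infty 2^{-(1+\lambda)k}$. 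To extend this to all of $J$, expand $\Phi$ around $x^{(1)}$: the quadratic remainder on $J$ is $O(||\va(m)||_\infty 2^{(-(1+\lambda)-2\eta^*(m))k})$, comfortably within the target, and the linear term is controlled provided $|\Phi'(x^{(1)})|\ll ||\va(m)||_\infty 2^{(-(1+\lambda)/2+\eta^*(m))k}$. For this I would use Rolle's theorem applied to a second integer point $\vq^{(2)}\in Q^\vf_n(I,\lambda,k,m)$ whose slope $x^{(2)}:=q_1^{(2)}/q_0^{(2)}$ is separated from $x^{(1)}$ by $\asymp 2^{(-(1+\lambda)/2-\eta^*(m))k}$: since $\Phi$ is small at both $x^{(i)}$, the difference quotient furnishes a point in $J$ where $|\Phi'|$ has the desired order, and the uniform bound $|\Phi''|\ll ||\va(m)||_\infty$ propagates it throughout $J$.

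The principal obstacle is the construction of the second integer vector $\vq^{(2)}$ with the requisite slope separation. Lemma~\ref{lem3} guarantees that a type-2 box contains at least two linearly independent primitive integer vectors, but one needs their projections to the $(q_0,q_1)$-plane to produce slopes differing by precisely $\asymp 2^{(-(1+\lambda)/2-\eta^*(m))k}$, rather than being incidentally near-collinear. I expect this to follow from combining the definition of $\eta^*(m)$ (which pins down one extremal slope), the type~3 assumption (which bounds $||\va(m)||_\infty$ and hence constrains the geometry of $\Delta^*\cap\ZZ^{n+1}$), and a pigeonhole argument on how many distinct rationals $q_1/q_0$ can arise from integer points in $\Delta^*$.
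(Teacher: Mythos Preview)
The obstacle you flag is genuine, and the patches you suggest (Lemma~\ref{lem3}, type-3 geometry, pigeonhole on slopes) will not resolve it: the integer vector $\vu$ witnessing the extremal width $|u_0x-u_1|\asymp 2^{((1-\lambda)/2-\eta^*(m))k}$ may well have $u_0$ small or zero, so there is no reason two \emph{integer} points of $\Delta_m$ with first coordinate $\asymp 2^k$ should have slopes $q_1/q_0$ separated by the required $\asymp 2^{(-(1+\lambda)/2-\eta^*(m))k}$. Your opening step already runs into the same issue: a vector with $R^\vf(\vq^{(1)})\neq\emptyset$ does satisfy $q_0^{(1)}\asymp 2^k$, but nothing forces such a vector to nearly realise the width defining $\eta^*(m)$ --- that extremum may be attained only by vectors with small first coordinate.

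The paper dissolves the difficulty with one observation: your master identity --- that $\va(m)\cdot\vx=0$ together with the defining inequalities of $\Delta_m$ and $x_0\gg 2^k$ force $|\Phi(x_1/x_0)|\ll||\va(m)||_\infty 2^{-(1+\lambda)k}$ --- holds for every \emph{real} $\vx\in\PPP(m)\cap\Delta_m$, not only for integer ones. Since $\PPP(m)\cap\Delta_m$ is a symmetric convex body, it contains the full triangle with vertices $-\vu,\vu,\vq$ (here $\vq$ is an integer point with $q_0\ge 2^{k-1}$, and $\pm\vu$ are integer witnesses to the extremal width). The paper slices this triangle at a height $x_0\asymp 2^k$ and defines $J$ as the image of the resulting sub-triangle under $\vx\mapsto x_1/x_0$; \eqref{lem6_eq1} is then immediate, and \eqref{lem6_eq2} follows \emph{pointwise} by applying the identity to each real $\vx$ --- no Taylor expansion, no Rolle step. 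In your language, the real midpoint $\tfrac12(\vq\pm\vu)$ supplies the missing ``second vector'' with first coordinate $\asymp 2^k$ and the correct slope separation for free; but once real points are admitted, the whole interpolation argument becomes superfluous.
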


This lemma tells us that for boxes $Q^\vf_n(I,\lambda, k,m)$ of type~3 we
must have a linear form which takes very small values on unusually long
intervals. Such a phenomenon is usually quite rare so the next idea will be
to provide an upper bound for the number of its occurrences.


\proof By the definition of type~2 (and hence type~3) box, there
exists $\vq\in \PPP(m)\cap \Delta_m$ with $2^k\ge q_0\ge 2^{k-1}$.
Also by definition of $\eta^*(m)$, taking $x = q_1/q_0$, there exist
two points $\pm \vu\in \PPP(m)\cap \Delta_m$ such that $|u_0x - u_1|
\asymp 2^{\left(\frac{1-\lambda}{2} - \eta^*(m)\right)k}$. Since
$\PPP(m)\cap \Delta_m$ is convex, the whole triangle with vertices
$-\vu,\vu,\vq$ belongs to $\PPP(m)\cap \Delta_m$. Next, at least one
of the midpoints $\vp$ of the segments $\vu,\vq$ and $-\vu,\vq$
satisfy $p_0 \ge 2^{k-2}$, hence $p_0\asymp 2^k$. Fix that midpoint
$\vp$ and consider the intersection of the triangle $-\vu, \vu,\vq$
with the hyperplane given by the equation $x_0 = p_0$ and call the
resulting segment $S$. Let $T$ be the triangle with one of the sides
$S$ and the opposite vertex $\vq$.

Consider
$$J:= \left\{\frac{x_1}{x_0} : \vx\in T\right\}.$$
Since $T$ is a connected set, $J$ is an interval. By construction,
we have $x\in J$, $p_1/p_0\in J$ and $|p_0x - p_1| \asymp
2^{\left(\frac{1-\lambda}{2} - \eta^*(m)\right)k}$. Since $p_0\asymp
2^k$, the length of $J$ satisfies~\eqref{lem6_eq1}. It is also not
hard to check that all $y = x_1/x_0\in J$ satisfy $|x_0f_i(y) - x_i|
\ll 2^{-\lambda k}$ for all $1\le i\le n$. Finally, by construction
of $T$, $\vx\in T$ implies $\va(m)\cdot \vx=0$, therefore
$$
|x_0 (\va\cdot (1,\vf(y)))| = |a_1(x_0f_1(y)-x_1) +
a_2(x_0f_2(y)-x_2) + \ldots + a_n(x_0f_n(y) - x_n)| \ll
||\va||_\infty 2^{-\lambda k},
$$
where $\va = \va(m) = (a_0, a_1, \ldots, a_n)$. Dividing by $x_0\gg 2^k$
gives~\eqref{lem6_eq2}.

If needed, we extend the interval $J$ so that it contains all
neighbourhoods $R^\vf(\vq)$ for $\vq\in Q^\vf_n(I,\lambda,k,m)$. By
the definition of $\eta^*(m)$, we have $|J|\ll
2^{(-\frac{1+\lambda}{2}-\eta^*(m))k}$ and hence $|J|$ is bounded
from above by~\eqref{lem6_eq1} as well.
\endproof

Denote by $J^\vf(m)$ the largest interval that contains $J(m)$ from
Lemma~\ref{lem6} such that for all $x\in J^\vf(m)$, the
inequality~\eqref{lem6_eq2} is satisfied with the same implied
constant. Define $\eta(m)$ from the equation
$$
|J^\vf(m)| =: 2^{\left(-\frac{1+\lambda}{2} -\eta(m)\right) k}.
$$
From Lemma~\ref{lem6} and construction of boxes of type~3, we get that
\begin{equation}\label{eq32}
\eta(m) \le \eta^*(m) \le \sigma_0-\delta(m) =
\frac{2(n\lambda-1)}{n+1} - \delta(m) < \frac{\lambda + 1}{2}.
\end{equation}
 On the other hand,
$\eta(m)$ may be negative. In that case we have several consecutive
boxes $Q_n^\vf (I,\lambda, k,m)$ that share the same hyperplane
$\PPP(m)$. The number of such boxes is bounded from above by $\ll
2^{-\eta(m)k}$.

Let $\epsilon>0$ be an arbitrarily small but fixed value. Consider
all intervals $J^\vf(m)$ that correspond to boxes
$Q_n^\vf(I,\lambda, k,m)$ of type~3. Group these intervals into
families $L_n^{\vf,\eta} (I,\lambda, k)$, where $\eta$ takes the
values $-M\epsilon, -(M-1)\epsilon, \ldots, 0,\epsilon, \ldots,
N\epsilon$, such that $M,N\in\ZZ$ will be defined below and for
every interval $J^\vf(m)$ in $L_n^{\vf,\eta} (I,\lambda, k)$, its
corresponding value $\eta(m)$ satisfies $\eta-\epsilon<\eta(m)\le
\eta$, i.e.
$$
L_n^{\vf,\eta}(I,\lambda,k):= \bigcup_{\eta-\epsilon<\eta(m)\le
\eta\atop Q_n^\vf(I,\lambda,k,m) \mathrm{\;is\; of\; type\; 3}}
J^\vf(m).
$$

 Then we can write the set $S_{n,3}^\vf(I,\lambda)$ as
the union of the subsets $S_{n,3}^{\vf,\eta}(I,\lambda)$ where each
$x\in S_{n,3}^{\vf,\eta}(I,\lambda)$ belongs to intervals from
$L_n^{\vf,\eta}(I,\lambda, k)$ for infinitely many values $k$.

Notice that $\eta(m)\le \sigma_0$ therefore we can take $N =
\sigma_0\epsilon^{-1}$. On the other hand, we surely have $|J(m)|\le
|I|\asymp 1$ therefore $\eta(m) \ge  - \frac{1+\lambda}{2}$ and we
can take $M = \frac{1+\lambda}{2\epsilon}$. We conclude that the
number of families $L_n^{\vf,\eta} (I,\lambda, k)$ is bounded from
above by a value which is independent of $k$.

Let $c_1,c_2,c_3>0$ be some arbitrary but fixed constants which may
only depend on $I$ and $\vf$ but not on $k$. Denote by $A^{\vf,
\eta}_n(I,\lambda, k)$ the set of integer vectors $\va\in\ZZ^{n+1}$
such that
\begin{equation}\label{def_ane}
||\va||_\infty \le \left\{\begin{array}{rl}c_12^{(\lambda -
\eta+\epsilon)k}& \mbox{if } \eta>0\\[1ex]
c_12^{\lambda k}&\mbox{ if }\eta\le 0
\end{array}\right.
\end{equation}
and there exists an interval $J\subset I$ that satisfies the
following conditions:
\begin{itemize}
\item  For all $x\in J$,
$|\va\cdot(1,\vf(x))|\le c_2||\va||_\infty 2^{-(1+\lambda)k}$;
\item $|J|\ge c_32^{-\left(\frac{1+\lambda}{2} +
\eta\right)k}$;
\item $J$ can not be extended to the interval of length $c_32^{-\left(\frac{1+\lambda}{2} +
\eta-\epsilon\right)k}$ so that the first property from this list is
satisfied.
\end{itemize}
For a given $\va\in A_n^{\vf,\eta}(I,\lambda,k)$, we denote the
corresponding interval by $J^{\vf,\eta}(\va,k)$. Next, define
$$J^{\vf,\eta}_n(I,\lambda,k):=\{J^{\vf,\eta}(\va,k): \va\in
A_n^{\vf,\eta}(I,\lambda,k)\}.$$ Finally, denote by
$D^{\vf,\eta}_n(I,\lambda)$ the following limsup set:
$$
D^{\vf,\eta}_n(I,\lambda) = \limsup_{k\to\infty}
D^{\vf,\eta}_n(I,\lambda, k):= \limsup_{k\to\infty} \bigcup_{J\in
J^{\vf,\eta}_n(I,\lambda,k)} J.
$$
If $c_1,c_2,c_3$ in the inequalities for $||\va||_\infty$,
$|\va\cdot(1,\vf(x))|$ and $|J|$ are the same as the implied constants
in~\eqref{eq4},~\eqref{lem6_eq1},~\eqref{lem6_eq2} respectively then
Lemma~\ref{lem6} implies that $S_{n,3}^{\vf,\eta}(I,\lambda) \subset
D_n^{\vf,\eta}(I,\lambda)$. In the coming sections we will compute an upper
bound for $\#A_n^{\vf,\eta}(I,\lambda,k)$. In particular, it will provide an
upper bound for the Hausdorff dimension of $D_n^{\vf,\eta}(I,\lambda)$. That
bound will be the same for any triple $c_1,c_2,c_3>0$. Hence that will give
us an upper bound for $\dim S_{n,3}^{\vf,\eta}(I,\lambda)$.

When proving Theorem~\ref{th2} for $n=3$, we further group the families
$L_n^{\vf,\eta}(I,\lambda,k)$ into subfamilies
$L_n^{\vf,\eta,\delta}(I,\lambda,k)$, where $\delta$ takes values
$0,\epsilon, \ldots, K\epsilon$, such that the corresponding interval
$J^\vf(m)$ in $L_n^{\vf,\eta,\delta}(I,\lambda,k)$ satisfies $\delta<
\delta(m) \le \delta+\epsilon$. If several values of $m$ share the same
interval $J^\vf(m)$ (that is the case when $\eta(m)<0$), we ask that the
maximum of all correspondent values $\delta(m)$ lies between $\delta$ and
$\delta+\epsilon$. Since $\delta(m)\le \sigma_0$ we can take $K =
\sigma_0\epsilon^{-1}$. We also write the set $S_{n,3}^\vf(I,\lambda)$ as the
union of subsets $S_{n,3}^{\vf,\eta,\delta}(I,\lambda)$ in the same way as
before. Denote
\begin{equation}\label{def_aned}
A_n^{\vf,\eta,\delta}(I,\lambda,k):=\left\{\begin{array}{rl}
\{\va\in A_n^{\vf,\eta}(I,\lambda,k) : ||\va||_\infty \le c_1
2^{(\lambda-\eta-\delta+\epsilon)k} \}&\mbox{if }\eta\ge 0\\[1ex]
\{\va\in A_n^{\vf,\eta}(I,\lambda,k) : ||\va||_\infty \le c_1
2^{(\lambda-\delta+\epsilon)k} \}&\mbox{if }\eta< 0
\end{array}\right.
\end{equation}
Finally, for a given $J\in L_n^{\vf,\eta,\delta}(I,\lambda,k)$
Lemma~\ref{lem4} implies that the number of rational points from
$\bigcup_m Q_n^\vf (I,\lambda,k,m)$ such that $R(\vq)\in J$ is
bounded from above by
\begin{equation}\label{eq27}
\ll \left\{\begin{array}{ll} 2^{\left(\frac{3-(2n-1)\lambda}{2} +
(n-1)\delta + (n-1)\epsilon\right)k}&\mbox{if } \eta\ge0,\\
2^{\left(\frac{3-(2n-1)\lambda}{2} + (n-1)\delta - \eta +
n\epsilon\right)k}&\mbox{if } \eta<0.
\end{array}\right.
\end{equation}

\section{Preliminary results for polynomials}

Now we will focus on the Veronese curve, i.e. $\vf = (x,x^2,\ldots, x^n)$.
The behaviour of rational points and linear forms near it is much better
understood than for a generic curve. As was discussed before, for all the
notions associated with this curve we omit the superscript $\vf$.

We start by adapting the arguments of R. Baker~\cite[Lemma
4]{baker_1976} to show that without loss of generality one can
assume for all $\va\in A_n^\eta(I,\lambda, k)$ (respectively for all
$\va\in A_n^{\eta,\delta}(I,\lambda,k)$) that $||\va||_\infty =
a_n$.

Define
$$
A^{*,\eta}_n(I,\lambda,k):= \{\va\in A^\eta_n(I,\lambda,k) :
||\va||_\infty = a_n\}
$$
The set $A_n^{*,\eta,\delta}(I,\lambda,k)$ is defined similarly. Let
$J^{*,\eta}_n(I,\lambda,k)$ be the corresponding subset of
$J^{\eta}_n(I,\lambda,k)$ and $D^{*,\eta}_n(I,\lambda)$ be the
corresponding limsup set.

\begin{lemma}\label{lem12}
Suppose that for any interval $I$ such that $\dist(I,0)\gg 1$, and
any $c_1,c_2,c_3>0$ one has $\dim D^{*,\eta}_n(I,\lambda)\le d$.
Then $\dim D^{\eta}_n(I,\lambda)\le d$.

If there exists a function $a(\lambda, k, \eta)$ such that
$\#A_n^{*,\eta}(I,\lambda,k)\ll_{I,c_1,c_2,c_3} a(\lambda,k,\eta)$ then we
also have $\#A_n^{\eta}(I,\lambda,k) \ll_{I,c_1,c_2,c_3} a(\lambda,k,\eta)$.
Analogously, if there exists a function $a(\lambda,k,\eta,\delta)$ such that
$\#A_n^{*,\eta,\delta}(I,\lambda,k)\ll_{I,c_1,c_2,c_3}
a(\lambda,k,\eta,\delta)$ then $\#A_n^{\eta,\delta}(I,\lambda,k)
\ll_{I,c_1,c_2,c_3} a(\lambda,k,\eta,\delta)$.
\end{lemma}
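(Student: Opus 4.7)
The plan is to adapt R. Baker's argument~\cite{baker_1976} via an integer M\"obius substitution that moves any specified coordinate of $\va$ into the leading position. Given $\va \in A^\eta_n(I,\lambda,k)$, I would partition according to the index $j \in \{0,\ldots,n\}$ achieving $||\va||_\infty = |a_j|$. The case $j = n$ is already in $A^{*,\eta}_n$, so only the cases $j < n$ require work.

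For an integer $c \notin \bar I$, consider the polynomial substitution
\[
P_\va(x) \;\longmapsto\; Q_\va(y) := y^n P_\va(1/y + c) = \sum_{k=0}^n \frac{P^{(k)}_\va(c)}{k!}\, y^{n-k}.
\]
Then $Q_\va \in \ZZ[y]$, its leading coefficient equals $P_\va(c)$, and its coefficient vector $\va'$ satisfies $||\va'||_\infty \ll_{n,|c|} ||\va||_\infty$. The essential tool is a Vandermonde observation: for fixed $M > \sup I$, the map $P \mapsto (P(M),P(M+1),\ldots,P(M+n))$ from polynomials of degree $\le n$ to $\RR^{n+1}$ is a linear isomorphism whose operator norm and that of its inverse are bounded in terms of $M$ and $n$. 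Hence there exists $c \in \{M,M+1,\ldots,M+n\}$ with $|P_\va(c)| \gg_{n,M} ||\va||_\infty$, and therefore $|a'_n| \asymp ||\va'||_\infty$. This comparability (rather than strict equality) is absorbed into the freedom in the constant $c_1$ appearing in the definition of $A^{*,\eta}_n$; after possibly negating $\va'$, the sign condition is also met.

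The map $\phi_c(y) = 1/y + c$ sends $I_c := \phi_c^{-1}(I)$ bijectively onto $I$; since $c \notin \bar I$, $I_c$ is bounded and separated from $0$, and $\phi_c$ is bi-Lipschitz there. For $y \in \phi_c^{-1}(J)$ we have $|Q_\va(y)| = |y|^n |P_\va(\phi_c(y))| \ll ||\va'||_\infty 2^{-(1+\lambda)k}$ and $|\phi_c^{-1}(J)| \asymp |J|$, so $\va' \in A^{*,\eta}_n(I_c,\lambda,k)$ up to constant adjustments in $c_1,c_2,c_3$. Since $\va \mapsto \va'$ is injective for each fixed $c$, and there are finitely many admissible choices of $(c,j)$, summing gives $\#A^\eta_n(I,\lambda,k) \ll a(\lambda,k,\eta)$. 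The argument for $A^{\eta,\delta}_n$ is identical because the auxiliary height bound is preserved under the substitution up to constants.

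For the Hausdorff dimension, pigeonhole over the finite set of admissible $c$ gives $D^\eta_n(I,\lambda) \subset \bigcup_c \phi_c\bigl(D^{*,\eta}_n(I_c,\lambda)\bigr)$, and each $\phi_c$ being bi-Lipschitz on $I_c$ preserves Hausdorff dimension, so $\dim D^\eta_n(I,\lambda) \le \max_c \dim D^{*,\eta}_n(I_c,\lambda) \le d$ by hypothesis. The main technical obstacle is careful constant bookkeeping: the Vandermonde step yields only comparability $|a'_n| \asymp ||\va'||_\infty$, which must be absorbed into the flexibility of $c_1,c_2,c_3$ in $A^{*,\eta}_n$; this is precisely what the uniformity of the hypothesis in these constants allows.
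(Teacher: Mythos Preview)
Your approach is essentially the paper's own (Baker's translation-plus-inversion), but there is a genuine gap at the point where you claim comparability is ``absorbed into the freedom in the constant $c_1$''. The set $A^{*,\eta}_n(I,\lambda,k)$ is defined by the \emph{strict} equality $\|\va\|_\infty = a_n$; the constant $c_1$ only governs the size of $\|\va\|_\infty$ relative to $2^{(\lambda-\eta+\epsilon)k}$, not the ratio $|a_n|/\|\va\|_\infty$. Your substitution $Q_\va(y)=y^nP_\va(1/y+c)$ gives $|a'_n|=|P_\va(c)|\gg\|\va\|_\infty$ and $\|\va'\|_\infty\ll\|\va\|_\infty$, hence $|a'_n|\asymp\|\va'\|_\infty$, but the other Taylor coefficients $P_\va^{(j)}(c)/j!$ may well exceed $|P_\va(c)|$, so in general $\va'\notin A^{*,\eta}_n$.

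The repair is exactly what the paper (following Baker) does and you omitted: after the translation/inversion, rescale $y\mapsto Cy$ for a fixed constant $C$ depending only on $n$. This multiplies the $j$-th coefficient by $C^j$, so once $C$ exceeds the comparability constant one has $C^n|a'_n|>C^j|a'_j|$ for all $j<n$, yielding genuine equality $\|\va''\|_\infty=a''_n$. The paper uses $C=(n+1)^{n+3}2^n$ and cites Baker's Lemma~4 for this. With that one extra step your argument goes through; without it, the image of your map need not lie in $A^{*,\eta}_n$ and the hypothesis cannot be invoked.
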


\proof Consider $\va\in A^{\eta}_n(I,\lambda,k)$ and the
corresponding interval $J = J^{\eta}(\va,k)$. Then $\va\cdot
(1,\vf(x))$ is a polynomial $P_\va(x)$. By~\cite[Lemma
1]{baker_1966}, there exists $j\in\{0,\ldots, n\}$ such that
$|P_\va(j)| \asymp ||\va||_\infty$. Consider $Q(x) := P_\va(x+j)$.
Notice that $H(Q)\asymp ||\va||_\infty$ and $|Q(y)| \ll H(Q)\cdot
2^{-(1+\lambda)k}$ for all $y\in J - j:= \{x-j: x\in J\}$. On the
other hand, we have that $|Q(0)|\asymp H(Q)$ therefore $\dist(J-j,
0)\gg 1$ for $k$ large enough. Finally, notice that $|J-j| = |J|$.

Now consider $C = (n+1)^{n+3}2^n \asymp 1$ and let $R(x) = (Cx)^n
Q((Cx)^{-1})$. As shown in~\cite[Lemma 4]{baker_1976}, $H(R)$ equals
the leading coefficient of $R$. We also have $H(R)\asymp
||\va||_\infty$ and
$$
|R(z)|\ll H(R) 2^{-(1+\lambda)k}\;\mbox{ for all }\; z =
\frac{1}{Cy} \in \frac{1}{C(J-j)}:=\left\{\frac{1}{C(x-j)}: x\in
J\right\}.
$$
By construction, we also have that $\dist(1/(C(J-j)),0)\gg 1$ and
$|1/(C(J-j))| \asymp |J|$. We conclude that the coefficient vector $\vb$ of
$R(x)$ belongs to $A^{*,\eta}_n(1/(C(J-j)),\lambda,k)$ where the constants
$c_1^*, c_2^*, c_3^*$ arising from the definition of $A_n^{*,\eta}$ satisfy
$c_1^*\asymp c_1$, $c_2^*\asymp c_2$ and $c_3^*\asymp c_3$. From here we
immediately have
$$
\#A_n^{\eta}(I,\lambda,k)\ll \sum_{i=0}^n
\#A^{*,\eta}_n(1/(C(J-j)),\lambda,k) \ll_{I,c_1,c_2,c_3}
a(\lambda,k,\eta,\delta).
$$
The bound for $\#A_n^{\eta,\delta}(I,\lambda,k)$ is achieved
analogously.

Let $x\in D^\eta_n(I,\lambda)$. Then there exists $j\in\{0,\ldots,
n\}$ which corresponds to infinitely many vectors $\va\in
A_n^\eta(I,\lambda)$ such that $x\in J^{\eta}(\va,k)$. That in turn
implies that $1/(C(x-j)) \in D^{*,\eta}_n(1/(C(I-j)),\lambda)$.

Define $f_j(z):= (Cz)^{-1} + j$. We get that
$$
D_n^\eta(I,\lambda) \subset \bigcup_{j=0}^n
f_j(D^{*,\eta}_n(1/C(I-j),\lambda)).
$$
Then the statement of the lemma immediately follows.
\endproof

For the rest of the paper we will be dealing with sets
$A_n^{*,\eta}(I,\lambda,k), A_n^{*,\eta,\delta}(I,\lambda,k)$ and
$D_n^{*,\eta}(I,\lambda)$. However, for the sake of convenience we will omit
the stars in their notations. That is, we now state that for any $\va\in
A_n^\eta(I,\lambda)$ the product $\va\cdot (1,\vf(x))$ is a polynomial
$P_\va(x) = a_nx^n + \cdots + a_1x+a_0$ such that $H(P_\va) = a_n$. To
shorten the notation, we set $Q:= 2^k$. Then for all $\va\in
A_n^{\eta}(I,\lambda,k)$ we can write
$$
||\va||_\infty = a_n \ll \left\{
\begin{array}{ll}Q^{\lambda-\eta+\epsilon}&\mbox{if } \eta\ge 0,\\
Q^{\lambda}&\mbox{if }\eta<0,
\end{array}\right.
$$
\begin{equation}\label{eq13}
|P_\va(x)| \ll a_n Q^{-1-\lambda},\quad \forall\; x\in J(\va, k)
\end{equation}
and
\begin{equation}\label{eq14}
|J^{\eta}(\va, k)| \gg Q^{-\frac{1+\lambda}{2} - \eta}.
\end{equation}

Notice that $P_\va$ has exactly $n$ roots (counting multiplicities)
$x_1, x_2,\ldots, x_n$. Since the leading coefficient of $P_\va$ has
the largest absolute value, it is well known that all of them
satisfy $|x_i|\ll 1$. We will also use the fact that the
discriminant of the polynomial
$$
D(P) := a_n^{2n-2} \prod_{i\neq j} (x_i - x_j)^2
$$
is an integer number. For a given $x\in \RR$, we order the roots
$x_1, x_2, \ldots, x_n$ in such a way that
$$
|x-x_1|\le|x-x_2|\le \ldots\le |x-x_n|.
$$
We also denote $|x-x_i|=: Q^{-\kappa_i} = Q^{-\kappa_i(x)}$ and $|x_i-x_j| =:
Q^{-\mu_{i,j}}$. Notice that for $i<j$, $Q^{-\mu_{i,j}} \le 2Q^{-\kappa_j}$.
We will write $\alpha \lesssim \beta, \alpha\approx \beta$ and
$\alpha\gtrsim\beta$ if $Q^\alpha\ll Q^\beta$, $Q^\alpha\asymp q^\beta$ and
$Q^\alpha\gg Q^\beta$ respectively. Then with this notation we have
$\mu_{i,j}\gtrsim 0$, $\mu_{i,j}\gtrsim \kappa_j$.

\begin{lemma}\label{lem10}
Suppose that for a given polynomial $P_\va$ with $||\va||_\infty =
a_n$ there exists $w\in \RR$, $w>0$ and $\eta\in\RR$, $-\frac{w}{2}
< \eta < \frac{w}{2}$ and an interval $J$ of length $|J| \gg
Q^{-\frac{w}{2}-\eta}$ such that $\forall x\in J$, $P_\va(x) <
a_nQ^{-w}$. Then the discriminant of $P_\va$ satisfies
\begin{equation}\label{lem10_eq}
D(P_\va) \ll a_n^{2n-2} Q^{-w+2\eta}.
\end{equation}
\end{lemma}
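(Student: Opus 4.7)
The plan is to control the discriminant $D(P_\va) = a_n^{2n-2} \prod_{i<j}|x_i - x_j|^2$ by using the smallness of $P_\va$ on $J$ to produce a single well-chosen evaluation point $x_*\in J$ at which $a_n\prod_i |x_*-x_i|$ is small, and then transferring this into bounds on pairwise root differences via the triangle inequality.

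First I would locate such a point $x_*\in J$ that stays away from \emph{every} root, including the complex ones. Let $B := \{\Re(x_i) : 1\le i\le n\}$; then $|B|\le n$, so $B$ cuts $J$ into at most $n+1$ subintervals. Taking $x_*$ to be the midpoint of the longest one gives $|x_* - b| \gg |J|$ for every $b\in B$, whether $b\in J$ or just outside, since the half-length of the chosen subinterval is at least $|J|/(2(n+1))$. For any root $x_i$ one then has $|x_* - x_i|\ge |x_* - \Re(x_i)|\gg |J|\gg Q^{-\frac{1+\lambda}{2}-\eta}$, because $x_*$ is real and hence $|x_*-x_i|^2 = (x_*-\Re(x_i))^2 + \Im(x_i)^2$.

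Set $d_i := |x_*-x_i|$ and order $d_1\le d_2\le\cdots\le d_n$. Then $|P_\va(x_*)|\le a_n Q^{-w}$ yields $d_1 d_2\cdots d_n \le Q^{-w}$, and since $d_1\gg Q^{-w/2-\eta}$,
\[
\prod_{j=2}^{n} d_j \;\le\; \frac{Q^{-w}}{d_1} \;\ll\; Q^{-w/2+\eta}.
\]
For each pair $i<j$, the triangle inequality gives $|x_i-x_j|\le d_i+d_j\le 2d_j$, hence
\[
\prod_{i<j} |x_i-x_j|^2 \;\ll\; \prod_{j=2}^{n} d_j^{\,2(j-1)}.
\]
Now every root satisfies $|x_i|\ll 1$ (because $a_n$ is the largest coefficient of $P_\va$) and $x_*$ lies in $I$, so each $d_j\ll 1$. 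For $j\ge 2$ we have $2(j-1)\ge 2$, so $d_j^{2(j-1)}\ll d_j^{2}$ with an implicit constant depending only on $n$. Therefore
\[
\prod_{j=2}^{n} d_j^{2(j-1)} \;\ll\; \Bigl(\prod_{j=2}^{n} d_j\Bigr)^{2} \;\ll\; Q^{-w+2\eta},
\]
and multiplying by $a_n^{2n-2}$ gives $|D(P_\va)|\ll a_n^{2n-2}Q^{-w+2\eta}$, as required.

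The main delicate step is the choice of $x_*$: one must split $J$ using the real parts of \emph{all} roots, not just the real roots. Otherwise a complex pair $a\pm bi$ with $a\in J$ and $|b|$ tiny could force $\min_i|x-x_i|$ to be small at every $x\in J$, and the whole argument (which hinges on $d_1\gg Q^{-w/2-\eta}$) would collapse. Once this subdivision is done correctly, the rest is essentially bookkeeping with the triangle inequality and the bound $d_j\ll 1$.
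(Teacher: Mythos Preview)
Your argument is correct and genuinely different from the paper's. The paper proceeds by Lagrange interpolation: it samples $P_\va$ at $n+1$ equally spaced points of $J$, inverts the Vandermonde system to bound all derivatives $|P_\va^{(i)}(x_0)|\ll a_n Q^{-\frac{2-i}{2}w+i\eta}$, and then uses the bound on $P'_\va$ at the nearest root $x_1$ to deduce $\prod_{j\ge 2}|x_1-x_j|\ll Q^{-w/2+\eta}$; the remaining root differences are controlled by $|x_i-x_j|\ll 1$. You bypass the Vandermonde step entirely: a pigeonhole over the real parts of the roots produces a single point $x_*\in J$ with $|x_*-x_i|\gg|J|$ for every $i$, and then one evaluation of $|P_\va(x_*)|=a_n\prod d_i$ plus the triangle inequality $|x_i-x_j|\le 2d_j$ does the rest. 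Your route is more elementary and also handles complex roots cleanly (the paper's ``segment between $x_1$ and $x_0$'' step tacitly assumes $x_1$ is real). On the other hand, the paper's derivative bounds~\eqref{eq9} are reused later in Sections~\ref{sec7} and beyond, so its extra work is not wasted.

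Two small cosmetic points: you wrote $Q^{-\frac{1+\lambda}{2}-\eta}$ once where you meant $Q^{-\frac{w}{2}-\eta}$, and ``$x_*$ lies in $I$'' should read ``$x_*\in J$, which is bounded since $|P_\va(x)|\ge a_n\prod(|x|-|x_i|)$ forces $|x|\ll 1$ whenever $|P_\va(x)|<a_n Q^{-w}$''. Neither affects the substance.
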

\proof Fix a point $x_0\in J$ and consider any $x\in J$. We get
$$
P_\va(x) = P_\va(x_0) + (x-x_0)P'_\va(x_0) + \ldots +
\frac{1}{n!}(x-x_0)^n P_\va^{(n)}(x_0).
$$
Let $y_1, y_2, \ldots, y_{n+1}\in J$ be such that $y_2-y_1 = \ldots =
y_{n+1}-y_n$, $y_1$ and $y_{n+1}$ are the endpoints of $J$. That immediately
implies $|y_{i+1}-y_i|= |J|/n \asymp |J|$ for all $1\le i\le n$. Also denote
$b_i = \frac{P_\va^{(i)}(x_0)}{i!}$. Then the values $b_i$ are the solutions
of the following matrix equation
$$
\left(\begin{array}{ccccc} 1&y_1-x_0&(y_1-x_0)^2&\cdots&
(y_1-x_0)^n\\
1&y_2-x_0&(y_2-x_0)^2&\cdots&
(y_2-x_0)^n\\
\vdots&\vdots&\vdots&\ddots&\vdots\\
1&y_{n+1}-x_0&\cdots&\cdots&(y_{n+1}-x_0)^n
\end{array}\right) \left(\begin{array}{c}
b_0\\b_1\\\vdots\\b_{n}
\end{array}\right) = \left(\begin{array}{c}
P_\va(y_1)\\P_\va(y_2)\\\vdots\\P_\va(y_{n+1})
\end{array}\right).
$$
Notice that on the left hand side we have a Vandermonde matrix. Let's call it
$V$. Since $|y_j-y_i|\gg Q^{-\frac{w}{2} - \eta}$ for all $1\le i<j\le n+1$,
its determinant is
$$
|\det V| = \prod_{1\le i<j\le n+1} |y_i-y_j|\gg
\left(Q^{-\frac{w}{2}-\eta}\right)^{\frac{n(n+1)}{2}}.
$$
Then Cramer's rule gives for $2\le i\le n$
\begin{equation}\label{eq9}
|P_\va(x_0)| \asymp |b_0| \ll a_nQ^{-w};\; |P'_\va(x_0)| \asymp
|b_1| \ll a_nQ^{-\frac{w}{2}+\eta};\; |P^{(i)}_\va(x_0)|\ll
a_nQ^{-\frac{2-i}{2}w + i\eta}.
\end{equation}

Let $x_1,x_2,\ldots, x_n$ be the roots of $P_\va$ such that $|x_1-x_0|\le
|x_2-x_0|\le\cdots\le |x_n-x_0|$. Notice that for all $x$ in the segment
between $x_1$ and $x_0$  and all $2\le i\le n$ we get $|x-x_i|\ll |x_0-x_i|$.
Together with~\eqref{eq9}, that implies

$$
a_nQ^{-\frac{w}{2}+\eta}\gg|P'_\va(x_0)|\gg |P'_\va (x_1)| = a_n |(x_1-x_2)\cdots
(x_1-x_n)|.
$$
Therefore $\mu_{1,2}+\mu_{1,3}+\ldots+\mu_{1,n} \gtrsim
\frac{w}{2}-\eta$. Taking into account that for all other values
$\mu_{i,j}$ we have $\mu_{i,j}\gtrsim 0$, we end up with
$$
|D(P_\va)|= a_n^{2n-2} Q^{-\sum_{i\neq j} 2\mu_{i,j}}\ll a_n^{2n-2}
Q^{-w+2\eta}.
$$
\endproof

Applied to vectors $\va\in A_n^{\eta}(I, \lambda,k)$,
Lemma~\ref{lem10} immediately implies that

\begin{equation}\label{eq15}
D(P_\va)\ll a_n^{2n-2}Q^{-1-\lambda + 2\eta}\ll \left\{\begin{array}{rl}
a_n^{2n-2 - \frac{1+\lambda - 2\eta}{\lambda-\eta+\epsilon}}&\mbox{if }\; \eta>0,\\[1ex]
a_n^{2n-2-\frac{1+\lambda - 2\eta}{\lambda}}&\mbox{if }\; \eta\le0.
\end{array}\right.
\end{equation}

On top of that, since $\max\{|P_\va(x_0)|, |P'_\va(x_0)|, \ldots,
|P^{(n)}_\va(x_0)|\} \gg a_n$, from the inequalities~\eqref{eq9} we
derive that $\eta$ can not be too small, namely
\begin{equation}\label{eq31}
\frac{n-2}{2}(1+\lambda) + n\eta\gtrsim
0\qquad\Longleftrightarrow\qquad \eta\gtrsim
-\frac{(n-2)(1+\lambda)}{2n}.
\end{equation}

The next step is to find an upper bound for the cardinality
$\#A_n^\eta(I,\lambda,k)$. As the inequality~\eqref{eq15} suggests, it will
follow from an upper bound for the number of polynomials of bounded degree
and discriminant. It is well known that polynomials of a given degree $n$
come in equivalence classes where $P\approx Q$ if there exists a M\"obius
transform $\mu(x) = \frac{ax+b}{cx+d}$ with the determinant $ad-bc = \pm 1$
such that $Q(x) = (cx+d)^n P\circ\mu(x)$. It is also well known that all the
polynomials in the same equivalence class share the same discriminant.
Therefore one can first estimate the number of equivalence classes which
share a given discriminant and then the number of representatives of bounded
height in a given equivalence class and finally sum over discriminants up to
a given bound.

\section{Polynomials from the same equivalence class}

In this section we investigate how do the coefficients of equivalent
polynomials link with each other. In principle, to prove the main
result we only need a part of Proposition~\ref{prop4} but we find
the machinery of propagating rational points near the Veronese curve
interesting enough on its own. Therefore here we provide more
details than actually required.

Consider
$$
B = \begin{pmatrix}
a&b\\
c&d
\end{pmatrix} \in \mathrm{M}_{2,2}(\ZZ).
$$
We construct a map $\phi_n:M_{2,2}(\ZZ)\to M_{n+1,n+1}(\ZZ)$ in the
following way: $\phi_n(B) = A = (\alpha_{i,j})_{0\le i,j\le n}$
where the entry $\alpha_{i,j}$ equals the coefficient at $x^j$ of
the polynomial $(ax+b)^i(cx+d)^{n-i}$. The formula for $\alpha_{ij}$
is then
\begin{equation}\label{phi_aij}
\alpha_{ij} = \sum_{h=0}^n \left({i\atop h}\right)\left({n-i \atop
j-h}\right) a^hb^{i-h}c^{j-h}d^{n-i-j+h},
\end{equation}
where we set all binomial coefficients $\big({n\atop m}\big)$ equal
zero for $m<0$ or $m>n$. If one of the terms $a,b,c,d$ equals zero
then we still set the corresponding terms in the sum to be zero
where $a,b,c$ or $d$ is taken to the negative power.

The important property of this map is the following
\begin{proposition}\label{prop2}
$\phi_n$ is a monoid homomorphism from $M_{2,2}(\ZZ)$ to
$M_{n+1,n+1}(\ZZ)$. Restricted to $\SL_2(\ZZ)$, it is also a group
homomorphism from $\SL_2(\ZZ)$ to $\SL_{n+1}(\ZZ)$.
\end{proposition}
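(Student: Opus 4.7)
The plan is to identify $\phi_n(B)$ with the matrix of the classical substitution action of $B$ on the space of binary forms of degree $n$. Let $V_n$ denote the $(n{+}1)$-dimensional space of polynomials $P(u,v)$ homogeneous of degree $n$, equipped with the basis $e_i = u^i v^{n-i}$, $0\le i\le n$. For $B = \left(\begin{array}{cc}a&b\\c&d\end{array}\right)$ define the linear map $L_B\colon V_n\to V_n$ by $L_B(P)(u,v) = P(au+bv,\,cu+dv)$. Then $L_B(e_i) = (au+bv)^i(cu+dv)^{n-i} = \sum_j \alpha_{ij} e_j$ by \eqref{phi_aij}, so the matrix of $L_B$ in the basis $\{e_i\}$ is exactly $\phi_n(B)^T$.

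The central step is to verify $L_{B_1}\circ L_{B_2} = L_{B_2 B_1}$. This is a direct substitution: plugging $(a_1u+b_1v,\,c_1u+d_1v)$ into the two arguments of $L_{B_2}(P)(u',v') = P(a_2u'+b_2v', c_2u'+d_2v')$ produces $P$ evaluated at a pair of linear forms whose coefficient matrix one recognises as $B_2B_1$. Translating back to matrices (composition of linear maps corresponds to multiplication of their matrices in the same order) yields $\phi_n(B_1)^T\phi_n(B_2)^T = \phi_n(B_2B_1)^T$, and after transposing and relabelling this becomes $\phi_n(B_1 B_2) = \phi_n(B_1)\phi_n(B_2)$. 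Together with the immediate observations that $\phi_n(I)$ is the identity matrix and that every $\alpha_{ij}$ is a polynomial with integer coefficients in $a,b,c,d$, this establishes the monoid homomorphism from $M_{2,2}(\ZZ)$ to $M_{n+1,n+1}(\ZZ)$.

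For the $\SL_2$ refinement I would argue that $\det\phi_n(B)$, being a polynomial in the entries of $B$ that is multiplicative in $B$, must equal $(\det B)^k$ for some integer $k\ge 0$. To pin down $k$, evaluate on $B = \mathrm{diag}(t,1)$: the formula \eqref{phi_aij} immediately gives $\phi_n(B) = \mathrm{diag}(1,t,t^2,\dots,t^n)$ with determinant $t^{n(n+1)/2}$. Hence $\det\phi_n(B) = (\det B)^{n(n+1)/2}$, which equals $1$ on $\SL_2(\ZZ)$, so the restriction of $\phi_n$ lands in $\SL_{n+1}(\ZZ)$ and is therefore a group homomorphism.

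The only real obstacle is bookkeeping: the operator $B\mapsto L_B$ is naturally an anti-homomorphism, since applying $L_{B_1}$ after $L_{B_2}$ substitutes $B_2B_1$ rather than $B_1B_2$, whereas $\phi_n$ is supposed to be a genuine homomorphism. It is precisely the transpose relating $\phi_n(B)$ to the matrix of $L_B$ that reconciles these two conventions, and keeping this transpose straight is the only subtle point of the argument.
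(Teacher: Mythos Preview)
Your proof is correct and takes a genuinely different route from the paper. The paper proceeds by brute force on generators: it fixes the three matrices
\[
C_1=\begin{pmatrix}1&0\\1&1\end{pmatrix},\quad
C_2=\begin{pmatrix}0&1\\1&0\end{pmatrix},\quad
C_3=\begin{pmatrix}e&0\\0&1\end{pmatrix},
\]
and verifies the identity $\phi_n(BC_i)=\phi_n(B)\phi_n(C_i)$ for each $i$ by an explicit binomial-coefficient computation (the $C_1$ case in particular requires a nontrivial Vandermonde-type identity). Since $C_1,C_2$ generate $\SL_2(\ZZ)$ and $C_1,C_2,C_3$ generate $M_{2,2}(\ZZ)$ as a monoid, this suffices. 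Your argument instead recognises $\phi_n(B)^T$ as the matrix of the substitution action $L_B$ on degree-$n$ binary forms, so that multiplicativity reduces to the one-line computation $L_{B_1}\circ L_{B_2}=L_{B_2B_1}$, with the transpose flipping the anti-homomorphism into a homomorphism. This is cleaner and more conceptual; it also makes transparent why $\phi_n$ exists at all (it is the $n$th symmetric power representation), whereas the paper's approach is more self-contained and avoids invoking any outside structure.

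One small point: the assertion that a multiplicative polynomial on $M_{2,2}$ must be a power of $\det$ is true but not entirely trivial --- it amounts to the fact that the only polynomial characters of $\GL_2$ are powers of the determinant, which you might cite or justify in a sentence (e.g.\ restrict to $\GL_2(\CC)$, use that its character group is generated by $\det$, then extend by density). Alternatively, since you have already shown $\phi_n$ is multiplicative, you know $\phi_n(B)\in\GL_{n+1}(\ZZ)$ for $B\in\SL_2(\ZZ)$, so $\det\phi_n(B)\in\{\pm 1\}$; your diagonal computation over $\RR$ then shows $\det\phi_n$ is identically $(\det)^{n(n+1)/2}$ as a polynomial, which settles the sign.
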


\proof Let
$$
B = \begin{pmatrix}
a&b\\
c&d
\end{pmatrix},\qquad C_1 = \begin{pmatrix}
1&0\\
1&1
\end{pmatrix},\qquad C_2= \begin{pmatrix}
0&1\\
-1&0
\end{pmatrix},\qquad C_3= \begin{pmatrix}
e&0\\
0&1
\end{pmatrix}.
$$

Consider the entry $(i,j)$ of $\phi_n(BC_1)$. By~\eqref{phi_aij}, it
equals
$$
\sum_{h=0}^n \left({i\atop h}\right)\left({n-i \atop j-h}\right)
(a+b)^hb^{i-h}(c+d)^{j-h}d^{n-i-j+h},
$$
$$
= \sum_{h=0}^n \left({i\atop h}\right)\left({n-i \atop j-h}\right)
\sum_{k=0}^n \left({h\atop k}\right) a^k b^{i-k} \sum_{r=0}^n
\left({j-h\atop r-k}\right)c^{r-k}d^{n-i-r+k}
$$
$$
= \sum_{k=0}^n a^kb^{i-k} \sum_{h=0}^n\sum_{r=0}^n  \left({i\atop
h}\right)\left({n-i \atop j-h}\right)\left({h\atop k}\right)
\left({j-h\atop r-k}\right)c^{r-k}d^{n-i-r+k}
$$
$$
= \sum_{k=0}^n \left({i\atop k}\right) a^k b^{i-k} \sum_{r=0}^n
\left({n-i\atop r-k}\right) \sum _{h=0}^n \left({i-k\atop
h-k}\right)\left({n-i-r+k\atop j-h-r+k}\right)c^{r-k}d^{n-i-r+k}
$$
$$
= \sum_{k=0}^n \left({i\atop k}\right) a^k b^{i-k} \sum_{r=0}^n
\left({n-i\atop r-k}\right) \left({n-r\atop
j-r}\right)c^{r-k}d^{n-i-r+k}.
$$
For the last two equalities we use the following relations between
binomial coefficients, that can be easily verified: $\big({i\atop
h}\big)\big({h\atop k}\big) = \big({i\atop k}\big)\big({i-k\atop
h-k}\big)$ and
$$
\sum_{h=0}^{m+n} \left({m\atop h}\right)\left({n\atop a-h}\right) =
\left({m+n\atop a}\right).
$$
Next, a direct computation gives that the entry $(i,j)$ of
$\phi_n(C_1)$ equals $\big({n-i\atop j-i}\big)$. Therefore the entry
$(i,j)$ of $\phi_n(B)\phi_n(C_1)$ is
$$
\sum_{r=0}^n\sum_{h=0}^n \left({i\atop h}\right)\left({n-i \atop
r-h}\right) a^hb^{i-h}c^{r-h}d^{n-i-r+h} \cdot \left({n-r\atop
j-r}\right)
$$
$$
=  \sum_{h=0}^n \left({i\atop h}\right) a^h b^{i-h} \sum_{r=0}^n
\left({n-i\atop r-h}\right) \left({n-r\atop
j-r}\right)c^{r-h}d^{n-i-r+h}.
$$
We verify that all the entries of $\phi_n(BC_1)$ and
$\phi_n(B)\phi_n(C_1)$ are the same, thus $\phi_n(BC_1) =
\phi_n(B)\phi_n(C_1)$. Next, one can easily verify that $Id_{n+1} =
\phi_n(Id_2)$. For $B=C_1^{-1}$ we then get that $Id_{n+1} =
\phi_n(Id_2) = \phi_n(C_1^{-1})\phi_n(C_1)$ which immediately
implies $\phi_n(C_1^{-1}) = \phi_n(C_1)^{-1}$ and $\phi_n(BC_1^{-1})
= \phi_n(B)\phi_n(C_1^{-1})$.

The verification of $\phi_n(BC_2) = \phi_n(B)\phi_n(C_2)$ is
straightforward. Since $\SL_2(\ZZ)$ is generated by the matrices
$C_1$ and $C_2$ (see~\cite[Section VII.1]{serre_1973} for the
proof), we immediately derive that $\phi_n$ is a homomorphism from
$\SL_2(\ZZ)$ to $\SL_{n+1}(\ZZ)$.

The monoid $M_{n+1,n+1}(\ZZ)$ is generated by matrices $C_1$, $C_2$
and $C_3$ with arbitrary $e$. The equation $\phi_n(BC_3) =
\phi_n(B)\phi_n(C_3)$ is also rather straightforward and then the
first statement of the proposition follows. \endproof

\begin{lemma}
Let $\vp\in\ZZ^{n+1}$ be a good rational approximation to
$\vf(\xi):= (\xi,\xi^2, \ldots, \xi^n)$. Then
$\vq:=\phi_n\left(\begin{array}{cc}a&b\\c&d\end{array}\right)\vp$ is
a good rational approximation to
$\vf\left(\frac{a\xi+b}{c\xi+d}\right)$. More precisely, if
$$
\max_{i\le 1\le n} |p_0\xi^i - p_i| \le p_0^{-\lambda}
$$
then
$$
||q_0||\ll |c\xi+d|^n p_0 + ||B||^n p_0^{-\lambda}\quad\mbox{and}
\quad \max_{1\le i\le n} \left |q_0
\left(\frac{a\xi+b}{c\xi+d}\right)^i - q_i\right| \ll
||B||^np_0^{-\lambda}.
$$
\end{lemma}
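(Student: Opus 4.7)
The plan is to exploit the algebraic identity built into the definition of $\phi_n$: the matrix $A=\phi_n(B)$ intertwines the M\"obius action on the line with the linear action on the Veronese vector. Explicitly, by the very definition of $\alpha_{ij}$ given in \eqref{phi_aij}, the $i$-th row of $A$ encodes the coefficients of the polynomial $(ax+b)^i(cx+d)^{n-i}$, so that
$$\sum_{j=0}^n \alpha_{ij}\xi^j \;=\; (a\xi+b)^i(c\xi+d)^{n-i} \;=\; (c\xi+d)^n\eta^i, \qquad \eta := \tfrac{a\xi+b}{c\xi+d}.$$
In other words, $A\,(1,\xi,\xi^2,\ldots,\xi^n)^T = (c\xi+d)^n(1,\eta,\eta^2,\ldots,\eta^n)^T$.

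Having established this identity, I would decompose $\vp = p_0(1,\xi,\ldots,\xi^n)^T + \vepsilon$ where $\|\vepsilon\|_\infty \le p_0^{-\lambda}$ by hypothesis, and apply $A$ linearly to obtain
$$\vq \;=\; A\vp \;=\; p_0(c\xi+d)^n(1,\eta,\eta^2,\ldots,\eta^n)^T + A\vepsilon.$$
To bound the error $A\vepsilon$, I would simply observe that the $\ell^1$-norm of the $i$-th row of $A$ equals the sum of absolute values of the coefficients of $(ax+b)^i(cx+d)^{n-i}$, and is therefore at most $(|a|+|b|)^i(|c|+|d|)^{n-i}\le (2\|B\|)^n$. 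Consequently $|(A\vepsilon)_i|\ll \|B\|^n p_0^{-\lambda}$ for every $i$. The first estimate of the lemma then drops out of the $0$-th coordinate as $|q_0|\le|c\xi+d|^n p_0+|(A\vepsilon)_0|$, and for $1\le i\le n$ the essential cancellation of the two main terms yields
$$q_0\eta^i - q_i \;=\; \eta^i (A\vepsilon)_0 \;-\; (A\vepsilon)_i,$$
which is bounded by a constant multiple of $\|B\|^n p_0^{-\lambda}$.

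The only subtlety lies in the factor $|\eta|^i$ appearing in the last displayed line: it is absorbed into the implied constant under the standing assumption that $\xi$ ranges over the fixed bounded interval $I$ (so that, for the matrices $B$ actually arising, $|a\xi+b|$ and $|c\xi+d|^{-1}$ are themselves $O(\|B\|)$ and $O(1)$ respectively). Beyond this, the argument is purely formal: the identity in the first paragraph is nothing but an unwinding of the definition of $\phi_n$, and every remaining estimate is read off the coefficient structure of $A$. The homomorphism property of $\phi_n$ proved in Proposition~\ref{prop2} is not logically required here, but it serves as a conceptual explanation of why such a clean intertwining must exist in the first place.
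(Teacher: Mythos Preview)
Your argument is essentially identical to the paper's: both decompose $\vp=p_0\vxi+\vdelta$, use the key identity $\phi_n(B)\vxi=(c\xi+d)^n(1,\eta,\ldots,\eta^n)^T$ coming straight from the definition of $\phi_n$, and bound the error $\phi_n(B)\vdelta$ entrywise by $\|B\|^n\|\vdelta\|_\infty$. The only difference is cosmetic---you spell out the row-$\ell^1$ bound and flag the $|\eta|^i$ factor explicitly, whereas the paper absorbs both into the $\ll$ without comment.
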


\proof We are given that $p_0 \xi^i - p_i =: \delta_i$ with
$|\delta_i|\ll p_0^{-\lambda}$. Therefore $\vp = p_0\vxi + \vdelta$
where $\vxi = (1,\xi, \ldots, \xi^n)$ and $||\vdelta||\ll
p_0^{-\lambda}$. By the construction of $\phi_n$ we have
$$
\phi_n\left(\begin{array}{cc}a&b\\c&d\end{array}\right) p_0\vxi =
\left(\begin{array}{c}
p_0(c\xi+d)^n\\
p_0(c\xi+d)^{n-1}(a\xi+b)\\
\cdots\cdots\cdots\\
p_0(a\xi+b)^n
\end{array}\right) =: \vq^*
$$
and $q_0^* \left(\frac{a\xi+b}{c\xi+d}\right)^i - q_i^* = 0$. Let
$\vq = \phi_n(B)\vp = \vq^* + \vepsilon$ where $\vepsilon =
\phi(B)\vdelta$. Then
$$
\max_{1\le i\le n}\left|q_0\left(\frac{a\xi+b}{c\xi+d}\right)^i -
q_i\right| = \max_{1\le i\le
n}\left|\epsilon_0\left(\frac{a\xi+b}{c\xi+d}\right)^i -
\epsilon_i\right| \ll ||B||^n ||\vdelta||_\infty \ll ||B||^n
p_0^{-\lambda}.
$$

Now we estimate $q_0$: $|q_0|\ll |p_0(c\xi +d)^n| + ||B||^n
||\vdelta||_\infty \ll |c\xi+d|^n p_0 + ||B||^n p_0^{-\lambda}$.
\endproof

From the last Lemma we see that the smaller the expression
$(c\xi+d)$ in terms of $||B||$ is, the closer the point
$\phi_n(B)\vp$ is to the Veronese curve. In particular, if $-d/c$ is
a convergent to $\xi$ and $||B||\asymp |c|\ll
p_0^{\frac{1+\lambda}{2n}}$ then we have
$$
q_0\ll p_0||B||^{-n},\quad \max_{1\le i\le
n}\left|q_0\left(\frac{a\xi+b}{c\xi+d}\right)^i - q_i\right| \ll
||B||^{(1-\lambda)n}q_0^{-\lambda}.
$$

Another simple application of the above lemma is the following
\begin{corollary}
For any $\xi\in\RR$ and $a,b,c,d\in\ZZ$ with $ad\neq bc$, one has
$$
\lambda_n(\xi) =
\lambda_n\left(\frac{a\xi+b}{c\xi+d}\right).\footnote{If the reader
is familiar with the notion of the uniform Diophantine exponent
$\hat{\lambda}_n(\xi)$ they can also verify the corollary for it as
well.}
$$
\end{corollary}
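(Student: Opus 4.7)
The plan is to reduce the identity to a direct application of the preceding Lemma with the integer matrix $B = \left(\begin{smallmatrix}a & b\\ c & d\end{smallmatrix}\right)$. Writing $\eta := (a\xi+b)/(c\xi+d)$, we may assume $c\xi+d\neq 0$; otherwise $\xi = -d/c \in \QQ$, both exponents equal $+\infty$, and there is nothing to prove.

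Fix any $\lambda < \lambda_n(\xi)$. By definition there is an infinite sequence of integer vectors $\vp \in \ZZ^{n+1}$ with $p_0 \to \infty$ and $\max_{1\le i\le n} |p_0\xi^i - p_i| \le p_0^{-\lambda}$. Set $\vq := \phi_n(B)\vp$. The preceding Lemma yields
$$
\max_{1\le i\le n} |q_0 \eta^i - q_i| \ll ||B||^n p_0^{-\lambda},
$$
while the identity $\vq = p_0\,\phi_n(B)(1,\xi,\ldots,\xi^n)^{\top} + \phi_n(B)\vdelta$ established in its proof shows that $q_0 = p_0(c\xi+d)^n + O(p_0^{-\lambda})$. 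Since $B$ and $\xi$ are fixed and $c\xi+d \neq 0$, we obtain $|q_0| \asymp p_0$ for all $p_0$ large enough, and in particular $q_0 \to \infty$ along this sequence. Substituting $p_0 \asymp q_0$ into the error bound gives $\max_i ||q_0\eta^i|| \ll q_0^{-\lambda}$, so for any $\lambda' < \lambda$ the inequality $\max_i ||q_0\eta^i|| < q_0^{-\lambda'}$ holds for infinitely many $q_0$. Letting $\lambda' \nearrow \lambda_n(\xi)$ gives $\lambda_n(\eta) \ge \lambda_n(\xi)$.

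The reverse inequality follows by applying exactly the same argument with the adjugate matrix $B' = \left(\begin{smallmatrix} d & -b\\ -c & a\end{smallmatrix}\right) \in \mathrm{M}_{2,2}(\ZZ)$, which has determinant $ad-bc \neq 0$ and realises the inverse M\"obius transformation $\eta \mapsto (d\eta-b)/(-c\eta+a) = \xi$. The only point that requires care is the transition from $p_0$ to $q_0$ in the exponent, which is where we use that $c\xi+d$ is a fixed nonzero real so that $|q_0| \asymp p_0$ and the quality of approximation is preserved on the nose; beyond that, the argument is routine.
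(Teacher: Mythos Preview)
Your argument is correct and is precisely the ``simple application of the above lemma'' that the paper alludes to without writing out: transport good approximations via $\phi_n(B)$, use $q_0 = p_0(c\xi+d)^n + O(p_0^{-\lambda})$ to get $|q_0|\asymp p_0$, and then undo everything with the adjugate matrix. There is nothing to add.
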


While this corollary is not deep, the author did not see it anywhere
in the literature.

As the  next step, we investigate what happens with linear subspaces
under the map $\phi_n$. Let $\va\in \ZZ^{h+1}$. Consider a subspace
$L_\va\subset \RR^{n+1}$ defined by the equations~\eqref{def_la}.

\begin{proposition}\label{prop4}
Let $B$ be an invertible matrix in $\mathrm{M}_{2,2}(\ZZ)$. Then we
have the following equation
\begin{equation}
\phi_n(B)L_\va = L_{(\phi_h(B^{-1}))^T\va}.
\end{equation}
\end{proposition}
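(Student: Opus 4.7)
\textsc{Proof proposal.} My plan is to pass through the interpretation of $L_\va$ via roots of the polynomial $A(t):=a_0+a_1t+\cdots+a_ht^h$ and to exploit the explicit action of $\phi_n$ on Veronese vectors.

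First, write $\vxi_\alpha:=(1,\alpha,\alpha^2,\ldots,\alpha^n)^T$. By the defining formula for the entries $\alpha_{ij}$, the $i$-th coordinate of $\phi_n(B)\vxi_\alpha$ is the polynomial $(ax+b)^i(cx+d)^{n-i}$ evaluated at $x=\alpha$. Hence
$$
\phi_n(B)\vxi_\alpha=(c\alpha+d)^n\,\vxi_{\mu(\alpha)},\qquad \mu(\alpha)=\frac{a\alpha+b}{c\alpha+d},
$$
so Veronese vectors are sent to Veronese vectors (up to a scalar) via the Möbius transform associated with $B$.

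Next, I handle the generic case in which $A$ has $h$ distinct finite non-zero roots $\alpha_1,\ldots,\alpha_h$. A direct substitution into the defining equations of $L_\va$ shows $\vxi_{\alpha_i}\in L_\va$; the Vandermonde determinant gives their linear independence; and since the defining linear system of $L_\va$ has rank $n-h+1$ we have $\dim L_\va=h$. Therefore $L_\va=\Span\{\vxi_{\alpha_1},\ldots,\vxi_{\alpha_h}\}$, and combining with the first step,
$$
\phi_n(B)L_\va=\Span\{\vxi_{\mu(\alpha_1)},\ldots,\vxi_{\mu(\alpha_h)}\}=L_\vb,
$$
where $\vb$ is the coefficient vector of any polynomial of degree $h$ with roots $\mu(\alpha_i)$. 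A short computation using $(c\alpha_i+d)(z-\mu(\alpha_i))=(c\alpha_i+d)z-(a\alpha_i+b)$ shows that
$$
(-cz+a)^h\,A\!\left(\tfrac{dz-b}{-cz+a}\right)=\sum_{i=0}^h a_i(dz-b)^i(-cz+a)^{h-i}
$$
has precisely these roots. Matching the coefficient of $z^j$ in each summand with the defining formula for $\phi_h$ applied to $B':=\begin{pmatrix}d&-b\\-c&a\end{pmatrix}=(\det B)\,B^{-1}$, one reads off $\vb=(\phi_h(B'))^T\va$. Since every entry of $\phi_h$ is homogeneous of degree $h$ in the entries of the matrix, $\phi_h(B')=(\det B)^h\phi_h(B^{-1})$, and because $L_\vb$ is invariant under non-zero scaling of $\vb$, this gives $\phi_n(B)L_\va=L_{(\phi_h(B^{-1}))^T\va}$ as required.

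Finally, the non-generic cases — repeated roots of $A$, a root at infinity ($a_h=0$), or values $\alpha_i$ with $c\alpha_i+d=0$ — must be handled. The cleanest way is to observe that both sides of the claimed identity depend algebraically on the entries of $\va$ and of $B$, so the equality, established on a Zariski-dense open subset of the parameter space, extends to all $(\va,B)$ with $\va\neq\mathbf{0}$ and $\det B\neq 0$. This density step is the main obstacle; alternatively one could use the homomorphism property of $\phi_n$ from Proposition~\ref{prop2} to reduce the verification to the generators $C_1,C_2,C_3$ of $M_{2,2}(\ZZ)$, checking the identity for each by direct expansion.
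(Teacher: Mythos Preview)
Your argument is correct and follows essentially the same route as the paper: both identify $L_\va$ with the span of Veronese vectors $\vxi_{\alpha_i}$ at the roots $\alpha_i$ of $P_\va$, use the computation $\phi_n(B)\vxi_\alpha=(c\alpha+d)^n\vxi_{\mu(\alpha)}$, and recover the new coefficient vector $\vb$ from the transformed polynomial $(-cz+a)^h A\!\left(\tfrac{dz-b}{-cz+a}\right)$, arriving at $\vb=(\phi_h(B^{-1}))^T\va$ up to the scalar $(\det B)^h$.

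The only noteworthy difference is in the treatment of degenerate configurations. The paper works projectively over $\CC$ and phrases everything in terms of intersection points of $\tau(L_\va)$ with the Veronese curve ``counting multiplicities'', which absorbs repeated roots and roots at infinity into a single uniform statement. You instead isolate the generic case (distinct finite roots, none annihilated by $c\alpha+d$) and close the gap by Zariski density of that locus; this is perfectly valid since the claimed equality of subspaces is an algebraic condition on Pl\"ucker coordinates. Your alternative suggestion of reducing to the generators $C_1,C_2,C_3$ via Proposition~\ref{prop2} would also work and is arguably the most elementary path, avoiding both multiplicity bookkeeping and any appeal to density.
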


\begin{proof}
Since $B$ is invertible, by Proposition~\ref{prop2} we get that
$\phi_n(B)$ is also an invertible matrix. Therefore $\phi_n(B)L_\va$
is a linear subspace of the same dimension as $L_\va$, i.e. its
dimension is $h$.

We extend the Veronese curve $\VVV_n$ to the complex space
$\CC^{n}$. We also convert $\CC^{n+1}$ to a projective space and
embed it in $\CC^n$ in the standard way:
$$
\tau(\vx):= \left(\frac{x_1}{x_0}, \frac{x_2}{x_0}, \ldots,
\frac{x_n}{x_0}\right).
$$
Then there are exactly $h$ points of intersection of $\VVV_n$ with
$\tau(L_\va)$, counting multiplicities. They are all of the form
$\vf(\xi)$, where $\xi$ are roots of the polynomial $P_\va(x):= a_0
+ a_1x + \ldots + a_hx^h$.

Denote $\vxi:=(1,\xi,\ldots, \xi^n)^T$. Notice that
$$
\phi_n(B)\vxi = \left((c\xi+d)^n, (c\xi+d)^{n-1}(a\xi+b),\cdots,
(a\xi+b)^n\right),
$$
therefore $\tau(\phi_n(B)\vxi) = \vf(\eta)$, where $\eta =
\frac{a\xi+b}{c\xi+d}$. We get that $\VVV_n$ intersects
$\tau(\phi_n(B)L_\va)$ at at least $h$ points $\vf(\eta)$, counting
multiplicities. On the other hand, there is an $h$-dimensional
subspace $L_\vb$ such that $\tau(L_\vb)$ intersects $\VVV_n$ at the
same points. It corresponds to a polynomial $P_\vb(x)$ whose roots
are $\eta$.

Finally, we compute $\vb$. We need to have
$$
\sum_{i=0}^h b_i(a\xi+b)^i (c\xi+d)^{h-i} = 0.
$$
By expanding the brackets and collecting terms at each power of
$\xi$, we get the following system of linear equations
$$
(\phi_h(B))^T \vb = \va\quad\Longleftrightarrow\quad \vb =
(\phi_h(B)^{-1})^T \va = (\phi_h(B^{-1}))^T\va.
$$
\end{proof}

One of the important outcomes of the above proof is that if $\xi$ is
a root of the polynomial $P_\va$ of degree $n$ then
$\frac{a\xi+b}{c\xi+d}$ is a root of the polynomial
$P_{\phi_n(B^{-1})^T\va}$. In the case $\det B = ad-bc = \pm 1$,
this expression can be slightly simplified: $P_{\phi_n(B^*)^T\va}$,
where $B^*:= \begin{pmatrix}
d&-b\\
-c&a
\end{pmatrix}$.

\section{Counting cubic polynomials with bounded height and discriminant}

Now we focus on the case $n=3$. For a given polynomial $P(x) =
c_0+c_1x+c_2x^2+c_3x^3$ we introduce the following height:
$$
H_d(P):=\max\{|c_2|, |c_3|, |c_1c_2|^{1/2}, |c_0c_2^3|^{1/4},
|c_0c_3|^{1/2}, |c_1^3c_3|^{1/4}, |c_0c_1c_2c_3|^{1/4}\}.
$$
Let $x$ be any root of $P$. If either $|c_3|$ or $|c_2|$ has the
largest absolute value among all the coefficients of $P$ then we use
Cauchy's inequality to get $|x|\ll H_d(P)/|c_3|$. If $|c_1|$ has the
largest absolute value then we compute
$$
\left|\frac{c_0}{c_3}\right|^{1/3}\le
\left|\frac{c_1}{c_3}\right|^{1/3} <
\left|\frac{c_1}{c_3}\right|^{1/2} < \frac{|c_1^3c_3|^{1/4}}{|c_3|}
\le \frac{H_d(P)}{|c_3|}.
$$
Finally, if $|c_0|$ has the maximal absolute value then
$$
\max\left\{\left|\frac{c_0}{c_3}\right|^{1/3},\left|\frac{c_1}{c_3}\right|^{1/2}\right\}
\le \left|\frac{c_0}{c_3}\right|^{1/2} =
\frac{|c_0c_3|^{1/2}}{|c_3|} \le \frac{H_d(P)}{|c_3|}.
$$
In the last two cases we use Lagrange-Zassenhaus
inequality~\cite[Lecture VI, Lemma 5]{yap_1999} to compute the upper
bound
$$
|x| \le
2\max\left\{\left|\frac{c_2}{c_3}\right|,\left|\frac{c_1}{c_3}\right|^{1/2},
\left|\frac{c_0}{c_3}\right|^{1/3}\right\}\ll \frac{H_d(P)}{|c_3|}.
$$
We conclude that in all cases one has
\begin{equation}\label{eq11}
|x|\ll \frac{H_d(P)}{|c_3|}.
\end{equation}
By examining the formula for $D(P) = c_1^2c_2^2 -4c_0^3c_2
-4c_1c_3^3-27c_0^2c_3^2+18c_0c_1c_2c_3$, one verifies that $D(P) \ll
H^4_d(P)$.

For a fixed $\va$ we consider all polynomials $P$ that are
equivalent to $P_\va$, i.e. all polynomials whose roots are
$\mu_B(\xi)$ where $\xi$ are the roots of $P_\va$ and $\mu_B(x) =
\frac{cx+d}{ax+b}$, $\det B=\pm 1$. Denote by $R_\va$ the polynomial
in this equivalence class of the minimal possible height $H_d$.

\begin{lemma}\label{lem11}
There exists an absolute constant $\epsilon>0$ such that for all
$\va\in\ZZ^4$ with implied $P_\va$ of non-zero discriminant, the
distance between any two roots $x_1,x_2$ of $R_\va$ is bigger than
$\epsilon$.
\end{lemma}

\proof Let $x_1,x_2,x_3$ be the roots of $R_\va$ and $c_0,c_1,c_2,c_3$ be its
coefficients. Suppose the contrary: $|x_1-x_2|<\epsilon$ for some small
enough $\epsilon>0$. If at least one of $x_1,x_2$ is real then without loss
of generality we can assume that $x_1\in \RR$. Otherwise, $x_1$ and $x_2$ are
conjugates to each other. Notice that by Lagrange's bound on the roots,
$|x_3-x_1|<\frac{H_d(R_\va)}{|c_3|}$.

By Proposition~\ref{prop4}, the polynomial $Q =
(c-ax)^3R_\va\circ\mu^{-1}_B$ with roots $\mu_B(x_1), \mu_B(x_2)$
and $\mu_B(x_3)$ has coefficients
\begin{equation}\label{eq12}
\begin{array}{l}
c_0(\mu) = c^3 c_0-c^2dc_1 + cd^2c_2-d^3c_3, \\
c_1(\mu) = -3ac^2 c_0 + (2acd+bc^2)c_1 - (2bcd+ad^2)c_2 + 3bd^2c_3,\\
c_2(\mu) = 3a^2cc_0 - (2abc+a^2d)c_1 + (2abd+b^2c)c_2 - 3b^2dc_3,\\
c_3(\mu) = -a^3c_0 + a^2bc_1-ab^2c_2+b^3c_3.\\
\end{array}
\end{equation}

Let $-b/a$ be the convergent of $\Re(x_1)$ where $a$ is the largest
possible denominator that satisfies $a\le \epsilon^{-1/2}$. Then
$|x_1+b/a|\le 2\epsilon^{1/2}a^{-1}$ which in turn implies
$|x_2+b/a|\ll \epsilon^{1/2}a^{-1}$. Next, we have
$\epsilon^{1/2}a^{-1} < 1\le \frac{H_d(R_{\va})}{|c_3|}$ and
therefore
$$|R'_\va(-b/a)|\ll
|c_3|\epsilon^{1/2}a^{-1}(|x_3+b/a|+\epsilon^{1/2}a^{-1})\stackrel{\eqref{eq11}}\ll
\epsilon^{1/2}a^{-1} H_d(R_\va).$$

Let $-d/c$ be the previous convergent of $\Re(x_1)$ before $-b/a$.
The same calculations as before lead to
$$
\left|x_1+\frac{d}{c}\right|\le (ac)^{-1},\quad
\left|x_2+\frac{d}{c}\right|\ll (ac)^{-1},\quad |R'_\va(-d/c)|\ll
(ac)^{-1}H_d(R_\va).
$$

Now we are ready to estimate the coefficients of $Q$:
$$
|c_3(\mu)| = a^3|c_3|
\left|\frac{b}{a}+x_1\right|\left|\frac{b}{a}+x_2\right|\left|\frac{b}{a}+x_3\right|
\ll a\epsilon H_d(R_\va) < \epsilon^{1/2} H_d(R_\va);
$$
$$
|c_2(\mu)| = \left|-3a^2c R_\va\left(-\frac{b}{a}\right) +(ad-bc) a
R'_\va\left(-\frac{b}{a}\right)\right|\ll \epsilon^{1/2} H_d(R_\va).
$$
Analogous computations for the other two coefficients give
$|c_1(\mu)|\ll H_d(R_\va)$ and $|c_0(\mu)|\ll H_d(R_\va)$. Finally,
straightforward computations verify that $H_d(Q) \ll
\epsilon^{1/8}H_d(R_\va)$. But that contradicts the choice of
$R_\va$ for $\epsilon$ small enough.
\endproof

Without loss of generality suppose that the distance between $x_2$ and $x_3$
is the largest among the distances between three roots $x_1,x_2,x_3$ or
$R_\va$. Then at least one of the other distances will be larger than
$\frac12 |x_2-x_3|$. Without loss of generality, let the shortest distance
between this numbers be $|x_1-x_2|\asymp 2^d$, $d\ge 0$. More exactly, if
$|x_1-x_2|<1$ we define $d=0$. Otherwise let $d$ be such that $2^d\le
|x_1-x_2|<2^{d+1}$. Then Lemma~\ref{lem11} implies that
\begin{equation}\label{eq16}
|D(R_\va)| \gg 2^{2d}|c_3(x_2-x_3)|^4.
\end{equation}
The same inequality is obviously true for all other pairs of roots
$x_i$ and $x_j$, $i\neq j,\; i,j\in\{1,2,3\}$.


\begin{proposition}\label{prop5}
For any $\epsilon>0$ there exists a constant $c = c(\epsilon)$ such
that for any cubic polynomial $P_\va$, the cardinality $N(P_\va, H)$
of the set
$$
\{P\in \ZZ[x]: P\approx P_\va,\quad H(P)\le H\}
$$
satisfies
\begin{equation}\label{prop5_eq}
N(P_\va,H) \le cH^{2/3+\epsilon} |D(P_\va)|^{-1/6}.
\end{equation}
\end{proposition}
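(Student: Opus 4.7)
The plan is to parametrize polynomials equivalent to $P_\va$ via an action of $\mathrm{GL}_2(\ZZ)$ on a canonical representative, and reduce the counting problem to a lattice-point count inside the sublevel set of a binary cubic form whose discriminant equals $D(P_\va)$.

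First I would replace $P_\va$ by $R_\va$, its minimal-$H_d$ representative in the equivalence class; this is legitimate since equivalence preserves both the discriminant and the count of polynomials of height at most $H$. Writing $R_\va(x)=c_3(x-x_1)(x-x_2)(x-x_3)$, Lemma~\ref{lem11} guarantees that the roots $x_i$ are pairwise separated by an absolute constant. Every polynomial equivalent to $R_\va$ arises (up to the stabilizer of $R_\va$ in $\mathrm{GL}_2(\ZZ)$, finite because $D(R_\va)\neq 0$) as $Q=Q_B$ for some $B=\begin{pmatrix}a&b\\c&d\end{pmatrix}\in \mathrm{GL}_2(\ZZ)$. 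A direct computation from \eqref{eq12} combined with Vieta's formulas shows that the leading and constant coefficients of $Q_B$ are
\begin{equation*}
\pm\, c_3\prod_{i=1}^{3}(ax_i+b) \qquad\text{and}\qquad \mp\, c_3\prod_{i=1}^{3}(cx_i+d),
\end{equation*}
respectively. Hence $H(Q_B)\le H$ forces both rows $(a,b)$ and $(c,d)$ of $B$ to lie in the planar region
\begin{equation*}
S_H:=\{(u,v)\in\RR^2 : |F(u,v)|\le H\},\qquad F(u,v):=c_3\prod_{i=1}^{3}(x_iu+v).
\end{equation*}

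Next I would exploit that $F\in\ZZ[u,v]$ is a binary cubic form whose discriminant (in the sense of binary forms) is $D(R_\va)=D(P_\va)$. The standard area estimate for sublevel sets of non-degenerate binary cubic forms yields
\begin{equation*}
\mathrm{area}(S_H)\asymp H^{2/3}|D(P_\va)|^{-1/6},
\end{equation*}
with absolute implied constant because Lemma~\ref{lem11} keeps the three linear factors of $F$ in general position. To count unimodular $B$ with rows in $S_H$, I would fix a primitive $(a,b)\in S_H\cap\ZZ^2$; then the points $(c,d)\in\ZZ^2$ with $ad-bc=\pm1$ form two cosets of $\ZZ(a,b)$, and since $F$ is homogeneous of degree~$3$, the number of such $(c,d)$ inside $S_H$ is $O\bigl(1+(H/|F(a,b)|)^{1/3}\bigr)$. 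Organising the outer sum over primitive $(a,b)$ by dyadic shells $|F(a,b)|\asymp H/2^k$ produces a geometric series that collapses to the area bound, and thus gives $N(R_\va,H)\ll H^{2/3+\epsilon}|D(P_\va)|^{-1/6}$.

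The hard part will be the behaviour of $S_H$ when $|c_3|$ is much smaller than $|D(P_\va)|^{1/4}$, i.e.\ when the $2^d$ factor in \eqref{eq16} is nontrivial and two roots of $R_\va$ are well separated while $F$ has a small leading coefficient. In that regime $S_H$ develops long, thin spikes along the lines $\{x_iu+v=0\}$, so the naive area bound is inadequate for controlling the number of lattice points caught inside each spike. One must separate the spikes (permissible by Lemma~\ref{lem11}, which forces the angles between them to be $\gg 1$), and then use a divisor-type bound on $\#\{(a,b)\in\ZZ^2\text{ primitive}:|F(a,b)|=m\}$ to pass from an integral estimate to the genuine lattice-point count inside each dyadic shell. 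The logarithmic losses from the resulting dyadic summation are precisely what is absorbed by the $H^\epsilon$ factor in \eqref{prop5_eq}.
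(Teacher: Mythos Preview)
Your reduction is sound and in fact coincides with the paper's: both arguments pass to the minimal-$H_d$ representative $R_\va$, observe that the leading coefficient of $Q_B$ equals $\pm F(a,b)$ with $F$ the homogenisation of $R_\va$, and so reduce $N(P_\va,H)$ to counting integer pairs $(a,b)$ with $|F(a,b)|\le H$. The paper short-circuits your coset sum over $(c,d)$ by observing directly that, after normalising $|a|\ge|c|$, $|b|\ge|d|$, each admissible $(a,b)$ has at most four completions to a M\"obius transform; this is a little cleaner than your weighted sum $\sum_{(a,b)}(1+(H/|F(a,b)|)^{1/3})$, but both routes land on the same core problem.

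The gap is in your last paragraph. A divisor-type bound $\#\{(a,b):F(a,b)=m\}\ll_\epsilon m^\epsilon$ (Evertse/Bombieri--Schmidt in the irreducible case, elementary factorisation in the reducible case) is true, but summing it over $1\le m\le H$ yields only $H^{1+\epsilon}$, not $H^{2/3+\epsilon}|D|^{-1/6}$; it gives no saving inside the spikes. What the paper actually does to control the spike near $x_1$ is a Diophantine decomposition: it sorts pairs $(a,b)$ by the exponent $t$ with $|x_1+b/a|\asymp|a|^{-t}$, uses the elementary count $\#\{|a|\le Q,\ |x_1+b/a|\le|a|^{-t}\}\ll Q^{2-t}$ together with the constraint $|F(a,b)|\le H$ (which bounds $|a|$ in terms of $H,|D|,t$) for $t<2$, the continued-fraction bound $\ll\log Q$ for $t\in[2,5/2]$, and the Davenport--Roth bound $\ll\log\log H$ for $t>5/2$. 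A further family $S_0(k)$ handles the range $|x_i+b/a|>1$, where the roots need not be bounded (Lemma~\ref{lem11} gives separation, not boundedness, so your ``angles $\gg 1$'' remark needs care when $|x_i|$ is large). Your dyadic-in-$|F|$ organisation could be made to work, but you would still need exactly this approximation-exponent analysis to bound the number of lattice points in each shell; the divisor bound alone is not a substitute.
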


{\bf Remark.} Notice that here we do not impose any restrictions on
$P_\va$. In particular, it can be reducible. If $D(P_\va) = 0$ then
the upper bound in~\eqref{prop5_eq} becomes infinity, i.e. in this
case the proposition is trivial.

\proof Let $R_\va = c_0+c_1x+c_2x^2+c_3x^3$ be the polynomial with
the minimal height $H_d$ among all polynomials in the equivalence
class of $P_\va$. Let $x_1,x_2,x_3$ be its roots. Then
Lemma~\ref{lem11} implies that $|x_i - x_j| \gg 1$, for all $i\neq
j; i,j\in\{1,2,3\}$. In particular, we have $|D(P_\va)|=
|D(R_\va)|\gg |c_3|^4$. The aim is to compute an upper bound for the
number of M\"obius transforms $\mu$ such that $H((c-ax)^3R_\va\circ
\mu^{-1})\le H$.

Notice that there are only finitely many M\"obius transforms $\mu_B$
with $\det B = \det\begin{pmatrix}a&b\\c&d\end{pmatrix}=\pm 1$ such
that $|a|\ge|c|$ and $|b|<|d|$. Also, M\"obius transforms come in
pairs $\mu_B$ and $\mu_{\tilde{B}}$ where $\tilde{B} =
\begin{pmatrix}c&d\\a&b\end{pmatrix}$. Therefore without loss of generality we may assume that $\mu(x) =
\frac{cx+d}{ax+b}$ where $|a|\ge |c|$ and $|b|\ge |d|$. Then for
each pair $a, b$ there exist at most 4 M\"obius transforms with all
the required properties and fixed entries $a,b$. Therefore it is
sufficient to compute an upper bound for the number of pairs $a, b$
that may lead to a polynomial $P$ with $H(P)\ll H$. Denote the set
of such pairs by $M(P_\va, H) = M$.

Given $x\in\RR$, $Q\in \RR^+$ and $0\le t<2$, recall that the number
$N(Q,t)$ of pairs $a,b$ such that $|a|\le Q, |ax-b|< a^{1-t}$
satisfies
$$
N(Q,t)\ll Q^{2-t},
$$
while for $t=2$, $N(Q,t)\ll \log Q$. Indeed, for $t=2$, all such
solutions $b/a$ come from convergents or semiconvergents of $x$.
While for $t<2$ one notices that the distance between two solutions
of the inequality  $|b_1/a_1 - b_2/a_2|$ is at least $1/Q^2$. Hence
the number of such solutions with $Q/2\le a$ is bounded from above
by
$$
2\left(\frac{Q}{2}\right)^{-t} /\; Q^{-2} = 2^{1+t} Q^{2-t}\ll
Q^{2-t}.
$$
Iterating this process for $Q/2, Q/4,\ldots$ proves the claim.

Fix $1>\epsilon>0$, $2> t\ge 0$ and consider the set $S(t,\epsilon)$
of points $a,b\in \ZZ^2$ such that
$$
|a|^{-t-\epsilon}<\left|x_1 + \frac{b}{a}\right|\le |a|^{-t}
$$
for the closest root $x_1$ of $R_\va$ to $-b/a$. Since all the roots
of $R_\va$ are placed far apart from each other, we must have $|x_j
+ b/a|\gg 1$, $j\in\{2,3\}$ for the other two roots. Then
from~\eqref{eq12} we compute
$$
|c_3(\mu)| = |a^3R_\va(-b/a)| \gg |a^3c_3
(x_1-x_2)(x_1-x_3)(x_2-x_3)|\cdot \frac{|x_1+b/a|}{|x_2-x_3|} \gg
\left| \frac{a^{3-t-\epsilon}|D(P_\va)|^{1/2}}{c_3(x_2-x_3)}\right|.
$$
Since we must have $|c_3(\mu)|\le H$, this establishes an upper
bound on the size $|a|$:
\begin{equation}\label{eq17}
|a|\ll \left(\frac{H
|c_3(x_2-x_3)|}{|D(P_\va)|^{1/2}}\right)^{\frac{1}{3-t-\epsilon}}\stackrel{\eqref{eq16}}\ll
\left(\frac{H}{|2^{d/2}D(P_\va)|^{1/4}}\right)^{\frac{1}{3-t-\epsilon}}
.
\end{equation}
Finally, we get that for $t<2$ the number of points $(a,b)$ in
$S(t,\epsilon)$ that satisfy $H((c-ax)^3R_\va\circ\mu^{-1})\le H$ is
bounded from above by
$$
a(t,\epsilon) \ll
\left(\frac{H}{|D(P_\va)|^{1/4}}\right)^{\frac{2-t}{3-t-\epsilon}}\ll
H^{\frac{2}{3}+\epsilon_1} |D(P_\va)|^{-1/6},
$$
where $\epsilon_1 = \frac{2}{3-\epsilon} - \frac{2}{3}$.

Next, consider the set $S(2,1/2)$. In this case, the values $a$ are
bounded from above by $(H|D(P_\va)|^{-1/4})^2$. Hence the number of
points $(a,b)$ in this set that satisfy
$H((c-ax)^3R_\va\circ\mu^{-1})\le H$ is bounded from above by
$$
a(2,1/2)\ll
\max\left\{1,\log\left(\frac{H}{|D(P_\va)|^{1/4}}\right)\right\}.
$$
Notice that for $N(P_\va,H)$ to be nonzero, we must have $H\gg
|D(P_\va)|^{1/4}$ because all the polynomials of height at most $H$
have the discriminant at most $54H^4$. Under these conditions we
have that $a(2,1/2)\ll H^{2/3}D(P_\va)^{-1/6}$.

Consider the pairs $a,b$ such that
$$
\left|x_1 + \frac{b}{a}\right|\le |a|^{-5/2}.
$$
From Roth's theorem we know that the number of such pairs $a,b$ is
finite. Moreover,~\cite[Theorem 1]{dav_rot_1955} gives the following
upper bound for the number of such pairs:
$$
a(5/2)\ll \log\log H\ll  H^{2/3}D(P_\va)^{-1/6}.
$$
Denote the set of the pairs $(a,b)$ with this property and such that
$H((c-ax)^3R_\va\circ\mu^{-1})\le H$ by $S(5/2)$.

The remaining pairs $a,b$ satisfy $|x_i + a/b|>1$ for all
$i\in\{1,2,3\}$. Split them into subsets $S_0(k)$ where every
$(a,b)\in S_0(k)$ satisfies
$$
2^k\le \left|x_i + \frac{b}{a}\right|<2^{k+1}.
$$
The number of pairs in this set with $|a|<Q$ equals $\asymp2^k Q^2$.
First consider $k\le d$. In this case the analogous inequality
to~\eqref{eq17} for $a\in S_0(k)\cap M$ is
$$
|a|\ll
\left(\frac{H}{2^{k+d/2}|D(P_\va)|^{1/4}}\right)^{\frac{1}{3}} .
$$
This implies
$$
\# (S_0(k)\cap M) \ll 2^{\frac{k-d}{3}}H^{\frac23} |D(P_\va)|^{-1/6}.
$$
Next, let $d<k\le D$ where the largest distance between the roots
$x_1,x_2,x_3$ is $\asymp 2^D$. Then we have that $|x_1-x_2|\asymp
2^d$, $|x_1-x_3|\asymp |x_2-x_3|\asymp 2^D$ and $D(P_\va) \asymp
c_3^4 2^{4D+2d}$. If $x_1$ or $x_2$ is the closest root to $-b/a$
then
$$
|c_3(\mu)|\asymp |a^3c_32^{2k+D}| \asymp |a^3 D(P_\va)^{1/4} 2^{2k-d/2}|
$$
and
$$
|a|\ll
\left(\frac{H}{2^{2k-d/2}|D(P_\va)|^{1/4}}\right)^{1/3}\qquad\Longrightarrow\qquad
\#(S_0(k)\cap M)\ll 2^{\frac{d-k}{3}} H^{\frac23}|D(P_\va)|^{-1/6}.
$$
If $x_3$ is the closest root to $-b/a$ then analogous computations give
$$
|c_3(\mu)|\asymp |a^3c_32^{2D+k}| \asymp |a^3 D(P_\va)^{1/4} 2^{k +
D -d/2}|\ge  |a^3 D(P_\va)^{1/4} 2^{2k-d/2}|
$$
and the same bound for $\#(S_0(k)\cap M)$ holds. Finally, for $k>D$,
$|c_3(\mu)|\asymp |a^3 D(P_\va)^{1/4} 2^{3k - D-d/2}|\ge |a^3
D(P_\va)^{1/4} 2^{2k-d/2}|$ and hence we get the same inequality for
$\#(S_0(k)\cap M)$.

To finish the proof of the proposition, we split the interval $[0,2]$ into
$N$ subintervals of equal length $\epsilon$. Then we split the set
$M(P_\va,H)$ into subsets
$$
\bigcup_{i=0}^N (S(i\epsilon, \epsilon)\cap M) \bigcup (S(2,1/2)\cap M)\bigcup
(S(5/2)\cap M) \bigcup_{k=0}^\infty (S_0(k)\cap M) .
$$
By the estimates from above, the total number of points in this
union is bounded from above by
$$
\left(N+2 + 2\sum_{k=0}^\infty 2^{-k/3}\right)H^{\frac23 +
\epsilon_1} D(P_\va)^{-1/6}.
$$
Here $\epsilon_1$ can be taken arbitrary small and $N =
2\epsilon^{-1}$.
\endproof

Note that Proposition~\ref{prop5} strengthens the
result~\cite{davenport_1961} of Davenport from 1961, where he got
$N(P_\va, H)\ll HD(P_\va)^{-1/4}$ for irreducible polynomials
$P_\va$. Now we are ready to prove Theorem~\ref{th3}.

\textsc{Proof of Theorem \ref{th3}.} Let $h(d)$ be the number of
equivalence classes of cubic polynomials that share the discriminant
$d\neq 0$. For convenience of notation we set $h(0)=0$.
Davenport~\cite{davenport_1951} showed that
$$
\sum_{d=-D}^D h(d) \asymp D.
$$
By Proposition~\ref{prop5}, for any given polynomial $P$ of
discriminant $d$ we have at most $c(\epsilon) H^{\frac23 +\epsilon}
d^{-1/6}$ polynomials that are equivalent to $P$ and have the height
at most $H$. Summing over all such polynomials, we get that
$$
N(H,D) \le c H^{\frac23 + \epsilon}\sum_{d = -D}^D h(d)d^{-\frac16}.
$$
Using Abel's summation formula finishes the proof.
\endproof

\section{Theorems~\ref{th2} and \ref{th5} for $n=3$}\label{sec7}

Now we are ready to compute an upper bound for $\dim
S_{3,3}^{\eta,\delta}(I,\lambda)$. We split the set
$A_3^{\eta,\delta}(I,\lambda, k)$ into two subsets:
$A_3^{1,\eta,\delta}(I,\lambda, k)$ consists of all vectors $\va\in
A_3^{\eta,\delta}(I,\lambda,k)$ such that $D(P_\va)\neq 0$, i.e.
$$
A_3^{1,\eta,\delta}(I,\lambda,k):=\{\va\in
A_3^{\eta,\delta}(I,\lambda, k): D(P_\va)\neq 0\}.
$$
$A_3^{2,\eta,\delta}(I,\lambda,k)$ in turn consists of all remaining
elements of $A_3^{\eta,\delta}(I,\lambda,k)$:
$$
A_3^{2,\eta,\delta}(I,\lambda,k):=\{\va\in
A_3^{\eta,\delta}(I,\lambda, k): D(P_\va)= 0\}.
$$
Then the sets $J_3^{\eta,\delta}(I,\lambda,k)$ and
$S_{3,3}^{\eta,\delta}(I,\lambda)$ also split into two subsets
$J_3^{1,\eta,\delta}(I,\lambda,k)$,
$J_3^{2,\eta,\delta}(I,\lambda,k)$ and
$S_{3,3}^{1,\eta,\delta}(I,\lambda)$,
$S_{3,3}^{2,\eta,\delta}(I,\lambda)$ respectively.

We first focus on the set $S_{3,3}^{1,\eta,\delta}(I,\lambda)$.

{\bf The case $\eta\ge 0$}. By definition~\eqref{def_aned}, we have
that all $\va\in A_3^{\eta,\delta}(I,\lambda,k)$ satisfy
$H(P_\va)\ll H = Q^{\lambda-\eta-\delta+\epsilon}$. On top of
that,~\eqref{eq15} implies $D(P_\va)\ll H(P_\va)^4Q^{-1-\lambda
+2\eta}$. Then by Theorem~\ref{th3} the number of such polynomials
and therefore $\#A_3^{1,\eta,\delta}(I,\lambda,k)$ is bounded from
above by
\begin{equation}\label{eq18}
H^{\frac23+\epsilon} \cdot (H^4Q^{-1-\lambda + 2\eta})^{\frac56} =
H^{4+\epsilon} Q^{-\frac56(1+\lambda - 2\eta)}.
\end{equation}
Recall that $Q = 2^k$, $H = Q^{\lambda-\eta-\delta+\epsilon} =
2^{(\lambda-\eta-\delta+\epsilon)k}$. Then one can easily check that
the notion~\eqref{eq18}, as a function of $\eta$, maximises at
$\eta=0$ and equals $Q^{\frac{19\lambda-5}{6} - 4\delta
+\epsilon_1}$ where $\epsilon_1\to 0$ as $\epsilon\to 0$.

On the other hand, there is also a natural upper bound
$Q^{\frac{1+\lambda}{2}}$ for $\#A_3^{1,\eta,\delta}(I,\lambda,k)$
(basically, it is the range for variable $m$ in sets $Q_3(I,\lambda,
k,m)$). One can easily check that this bound is smaller
than~\eqref{eq18} for
$$
\delta< \frac{2\lambda - 1}{3} + \frac{\epsilon_1}{4}.
$$

Recall that for any $J\in L_3^{\eta,\delta}(I,\lambda,k)$ the number
of rational points $\vq$ from the union of all type 3 sets
$Q_3(I,\lambda,k,m)$ such that $R(\va)\subset J$ is bounded from
above by~\eqref{eq27}. Let
$$
Q_3^{\eta,\delta}(I,\lambda,k):= \{\vq\in Q_3(I,\lambda,k) : \exists
J\in L^{1,\eta,\delta}_3(I,\lambda,k) \mbox{ s.t. }R(\vq)\in J\}.
$$
Then we have
$$
S_{3,3}^{1,\eta,\delta}(I,\lambda) = \limsup_{k\to\infty}
\bigcup_{\vq\in Q_3^{\eta,\delta}(I,\lambda,k)} R(\vq).
$$
Then the Hausdorff $s$-series that corresponds to the standard cover
of this limsup set is
$$
\sum_{k=1}^\infty
2^{\left(\frac{19\lambda-5}{6}-4\delta+\epsilon_1\right)k +
\left(\frac{3-5\lambda}{2} + 2\delta+2\epsilon\right)k -
(1+\lambda)ks}\qquad\mbox{if }\quad \delta\ge \frac{2\lambda-1}{3} +
\frac{\epsilon_1}{4}
$$
and
$$
\sum_{k=1}^\infty 2^{\frac{1+\lambda}{2}k  +
\left(\frac{3-5\lambda}{2} + 2\delta+2\epsilon\right)k -
(1+\lambda)ks}\qquad\mbox{if }\quad \delta< \frac{2\lambda-1}{3} +
\frac{\epsilon_1}{4}.
$$
This series converges as soon as the degrees of each power of 2 are
negative, i.e.
$$
s > \left\{\begin{array}{ll} \displaystyle\frac{\frac{2\lambda+2}{3}
- 2\delta+\epsilon_2}{1+\lambda}&\mbox{if}\quad  \delta\ge
\frac{2\lambda-1}{3} + \frac{\epsilon_1}{4}\\[2ex]
\displaystyle \frac{2-2\lambda + 2\delta +
2\epsilon}{1+\lambda}&\mbox{if}\quad \delta< \frac{2\lambda-1}{3} +
\frac{\epsilon_1}{4},
\end{array}\right.
$$
where $\epsilon_2$ tends to zero as $\epsilon\to 0$. Notice that the
first expression, as a function of $\delta$, monotonically
decreases, while the second one monotonically increases. We also
observe that for $\lambda<\frac12$ the second case never happens,
therefore the bound for $s$ maximises when $\delta = 0$ and we have,
by letting $\epsilon$ arbitrarily small, that $\dim
S_{3,3}^{1,\eta,\delta}(I,\lambda) \le \frac{2}{3}$. It is smaller
than $\frac{2-2\lambda}{1+\lambda}$ for $\lambda\le \frac12$.

For $\lambda>\frac12$, the bound for $s$ maximises when $\delta =
\frac{2\lambda - 1}{3} + \frac{\epsilon_1}{4}$. In this case we have
$$
s > \frac{4-2\lambda}{3(1+\lambda)} + \epsilon_2.
$$
This bound in turn implies
\begin{equation}\label{eq28}
\dim S_{3,3}^{1,\eta,\delta} (I,\lambda) \le
\frac{4-2\lambda}{3(1+\lambda)}.
\end{equation}

%

{\bf The case $\eta<0$}. Then by~\eqref{def_aned}, all $\va\in
A_3^{\eta,\delta}(I,\lambda,k)$ satisfy $H(P_\va)\ll H =
Q^{\lambda-\delta}$. The cardinality of
$A_3^{1,\eta,\delta}(I,\lambda,k)$ still satisfies~\eqref{eq18}. Let
$J\in L_3^{1,\eta,\delta}(I,\lambda,k)$. The number of values $m$
such that $R(\vq)$ can intersect an interval $J$, where $\vq\in
Q_3(I,\lambda,k,m)$, is bounded from above by $Q^{-\eta}$.
Therefore, the number of values of $m$, such that $R(\vq)$ intersect
with one of the intervals $J\in L_3^{1,\eta,\delta}(I,\lambda,k)$ is
bounded from above by
$$
H^{4+\epsilon}Q^{-\frac56(1+\lambda-2\eta) - \eta} \ll
Q^{(\lambda-\delta)(4+\epsilon) - \frac56(1+\lambda) +\frac23\eta}
$$
Notice that this bound monotonically increases with $\eta$ therefore
it maximises at $\eta=0$. This case has already been investigated
before and gives~\eqref{eq28}.

%

Consider now the set $S_{3,3}^{2,\eta,\delta}(I,\lambda)$. For all
vectors $\va\in A_3^{2,\eta,\delta}(I,\lambda,k)$ the polynomials
$P_\va$ have zero discriminant which means that they are of the form
$P_\va(x) = (ax-b)^2(cx-d)$ for some integers $a,b,c,d$. Notice that
the polynomial $P_\va(x) = (ax-b)^3$ is also covered, as we can
choose $a=c, b=d$.

If $x\in J$ for some $J\in J_3^{2,\eta,\delta}(I,\lambda,k)$
then~\eqref{eq13} implies
$$
|P_\va(x)|\ll ||\va||_\infty Q^{-1-\lambda}\ll
||\va||_\infty^{1-\frac{1+\lambda}{\lambda-\delta}}.
$$
Since by the Gel'fond lemma we have $H(P_\va) \asymp
(H(P_1))^2H(P_2)$ and $|P_\va(x)|=|P_1(x)|^2|P_2(x)|$, there must be
$i\in\{1,2\}$ such that
\begin{equation}\label{eq20}
|P_i(x)|\ll H(P_i)^{1-\frac{1+\lambda}{\lambda-\delta}}.
\end{equation}
If for some $x\in D^{2,\eta,\delta}_3(I,\lambda)$ there exist
infinitely many polynomials $P$ that satisfy~\eqref{eq20} then $x\in
D_1(I, \frac{1+\lambda}{\lambda-\delta}-1)$. Otherwise there must be
infinitely many vectors $\va\in A^{2,\eta,\delta}_3(I,\lambda)$ such
that
$$
P_\va(x) = Q_\va(x)^2P(x)\quad\mbox{or}\quad P_\va(x)=
P(x)^2Q_\va(x)
$$
where the polynomial $P(x)$ is fixed and satisfies~\eqref{eq20}. In
this case, by letting $k$ in $A^{2,\eta,\delta}_3(I,\lambda,k)$ to
infinity, we get infinitely many polynomials $Q_\va(x)$ that satisfy
$$
Q_\va(x) < H(Q_\va)^{1-\frac{1+\lambda}{\lambda-\delta} + \epsilon}.
$$
where $\epsilon$ can be made arbitrarily small. We conclude that in
all cases,
$$
D^{2,\eta,\delta}_3(I,\lambda) \subset
D_1\left(I,\frac{1+\lambda}{\lambda-\delta}-1\right).
$$

Finally, by Jarnik-Besicovich theorem,
$$
\dim S_{3,3}^{2,\eta,\delta}(I,\lambda) \le \dim
D^{2,\eta,\delta}_3(I,\lambda)\le \dim D_1(I,1/\lambda) =
\frac{2(\lambda-\delta)}{1+\lambda},
$$
which is not bigger than $\frac{4-2\lambda}{3(1+\lambda)}$ for all
$\delta \ge \frac{4\lambda-2}{3}$. Also notice that for $\lambda\le
\frac12$ this expression is smaller than
$\frac{2-2\lambda}{1+\lambda}$ hence we prove the last case of
Theorem~\ref{th2} for $n=3$.

In the remaining part of this section we assume that
$\lambda>\frac12$ and $\delta\le\sigma_1 := \frac{4\lambda-2}{3}$.
Consider $J\in J_3^{2,\eta,\delta}(I,\lambda,k)$. We must have
either $d/c\in J$ or $b/a\in J$.

{\bf The case $d/c\in J$ but $b/a\not\in J$}. by examining the
derivative $P'_\va(x)$ we find that the largest value of
$|P_\va(x)|$ for $x$ between $b/a$ and $d/c$ is for $x_0 =
\frac{b}{3a}+\frac{2d}{3c}$. One can easily check that $|x_0 -
b/a|\asymp |d/c-b/a|$, therefore there exists $x\in J$ such that
\begin{equation}\label{eq30}
\left|x - \frac{d}{c}\right| \gg Q^{-\frac{1+\lambda}{2} -
\eta}\quad\mbox{and}\quad \left|x - \frac{b}{a}\right| \asymp
\left|\frac{d}{c}-\frac{b}{a}\right|.
\end{equation}
Consider $|P_\va(x)| = H(P_\va)|x-c/d||x-b/a|^2<H(P_\va)
Q^{-1-\lambda}$. Then the above bounds imply
$$
\left|\frac{b}{a} - \frac{d}{c}\right| \ll Q^{-\frac{1+\lambda -
2\eta}{4}}.
$$
If $c\le H^{1/3}$, where $H$ is an upper bound for $H(P_\va)$ then
the number of fractions $d/c\in J$ and hence the number of
corresponding intervals $J$ is bounded from above by $\ll H^{2/3}$.
Now suppose the contrary $c>H^{1/3}$. Then we have $a\le
(H/c)^{1/2}\le H^{1/3}$. For a fixed rational number $b/a$, the
value of $c$ can change from $1$ to $H/a^2$. Therefore the number of
fractions $d/c$ that satisfy the first inequality in~\eqref{eq30} is
bounded from above by $\ll \max\{1,H^2a^{-4}Q^{-\frac{1+\lambda -
2\eta}{4}}\}$. Summing over all rational fractions with denominator
$a$ and then over all $a$, we end up with the following bound for
the number of interval $J\in J_3^{2,\eta,\delta}(I,\lambda,k)$ which
satisfy $d/c\in J$, $b/a\not\in J$, $c>H^{1/3}$:
$$
\sum_{a=1}^{H^{1/3}} \sum_b \max\left\{1,\frac{H^2}{a^4}
Q^{-\frac{1+\lambda - 2\eta}{4}}\right\}\ll H^{2/3} + H^2
Q^{-\frac{1+\lambda - 2\eta}{4}}.
$$

{\bf The case $b/a\in J$ and $d/c\in J$}. Then we have either $a\le
H^{1/3}$ or $c\le H^{1/3}$, hence the number of such intervals $J\in
J_3^{2,\eta,\delta}(I,\lambda,k)$ is bounded from above by $\ll
H^{2/3}$. Notice that the case $b/a = d/c$ always falls into this
category.

{\bf The case $b/a\in J$ but $d/c\not\in J$}. If $a\le H^{1/3}$ then
the number of corresponding intervals $J$ is again $\ll H^{2/3}$.
Hence assume that $a>H^{1/3}$. Since we have $a^2\le H$ and the
number of possible numerators $b$ such that $b/a\in I$ is $\ll a$,
we have at most $\ll H$ such intervals $J$. We also have that for
$|x-\frac{b}{a}|\le Q^{-\frac{1+\lambda}{2}}$, $|P_\va(x)|\le
HQ^{-1-\lambda}$ therefore in this case we must have $\eta\le0$. On
the other hand, the bound~\eqref{eq9} for $x_0 = b/a$ gives
$$
H^{-2/3}\le \frac{a}{H}\le \frac{1}{ac}\le\left|\frac{d}{c} -
\frac{b}{a}\right| \asymp \frac{|P''_\va(x_0)|}{H(P_\va)} \ll
Q^{2\eta}.
$$
Since for $\eta\le0$ one has $H\ll Q^{\lambda - \delta}$, we derive
that $\eta \gtrsim -\frac{\lambda - \delta}{3}$ or for large enough
$k$, $\eta > -\frac{\lambda - \delta}{3} - \epsilon$. If one does
not impose any conditions on which of $b/a, d/c$ belong to $J$, then
we can still get a lower bound for $\eta$ but a weaker one. Indeed,
for any $x\in J$~\eqref{eq9} gives
$$
H(P_\va)\asymp P'''(x) \ll HQ^{\frac{1+\lambda}{2} + 3\eta},
$$
therefore $\eta\ge -\frac{1+\lambda}{6} - \epsilon$.

We conclude that $\#A_3^{2,\eta,\delta}$ is bounded from above by
\begin{equation}\label{eq29}
H^{2/3} + H^2 Q^{-\frac{1}{4}(1+\lambda - 2\eta)} + H \cdot
\chi_{\left[-\frac{\lambda+\delta}{3} - \epsilon, 0\right]} (\eta),
\end{equation}
where $\chi_I (\eta)$ is the characteristic function of an interval
$I$.


Now we proceed as in the case of $S_{3,3}^{1,\eta,\delta}$. For
$\eta\ge 0$ we have that~\eqref{eq29} is maximised for $\eta=0$.
Then the Hausdorff $s$-series for the standard cover of
$S_{3,3}^{2,\eta,\delta}$ is
$$
\sum_{k=1}^\infty \left(2^{\left(\frac23(\lambda-\delta)\right)k}+
2^{\left(2\lambda-2\delta-\frac{1+\lambda}{4}\right)k}\right)
2^{\left(\frac{3-5\lambda}{2}+2\delta+2\epsilon\right)k -
(1+\lambda)sk}.
$$
This series converges as soon as
$$
s>\max\left\{\frac{9-11\lambda+8\delta}{6(1+\lambda)} + \epsilon_3,
\frac{5-3\lambda}{4(1+\lambda)} +
\epsilon_3\right\}\stackrel{\delta\le \sigma_1}=
\max\left\{\frac{11-\lambda}{18(1+\lambda)},
\frac{5-3\lambda}{4(1+\lambda)}\right\} + \epsilon_3.
$$
Notice that both expressions in the maximum are smaller than
$\frac{4-2\lambda}{3(1+\lambda)}$ for $\lambda<\frac{13}{11}$, hence
in this case we conclude
$$
\dim S_{3,3}^{2,\eta,\delta}(I,\lambda)\le
\frac{4-2\lambda}{3(1+\lambda)}.
$$

Now consider $\eta<0$. Then the number of values of $m$, such that
$R(\vq)$, $\vq\in Q_3(I,\lambda,k,m)$ intersects with one of the
intervals $J\in L_3^{2,\eta,\delta}(I,\lambda,k)$ is bounded from
above by
$$
H^{2/3}Q^{-\eta}+ H^2Q^{-\frac14(1+\lambda-2\eta)-\eta} +
HQ^{-\eta}\cdot \chi_{\left[-\frac{\lambda+\delta}{3} - \epsilon,
0\right]} (\eta).
$$
This expression decreases with $\eta$ hence it is maximised when
$\eta$ is the smallest possible, i.e. $\eta =
-\frac{1+\lambda}{6}-\epsilon$ for the first two terms and $\eta =
-\frac{\lambda+\delta}{3}-\epsilon$ for the last one. Then the
Hausdorff $s$-series for $S_{3,3}^{2,\eta,\delta}(I,\lambda)$ is
$$
\sum_{k=1}^\infty
\left(2^{\left(\frac23(\lambda-\delta)+\frac{1+\lambda}{6}+\epsilon\right)}
+ 2^{\left(2\lambda-2\delta
-\frac{1+\lambda}{6}+\frac{\epsilon}{2}\right)k} + 2^{\left(\lambda
-\delta + \frac{\lambda-\delta}{3} + \epsilon\right)k}\right)
2^{\left(\frac{3-5\lambda}{2} + 2\delta + 2\epsilon\right)k -
(1+\lambda)sk}.
$$
This series converges as soon as
$$
s> \max\left\{\frac{5-5\lambda+4\delta}{3(1+\lambda)}+\epsilon_4,
\frac{4-2\lambda}{3(1+\lambda)} + \epsilon_4,
\frac{9-7\lambda+4\delta}{6(1+\lambda)}+ \epsilon_4 \right\}
$$$$
\stackrel{\delta\le \sigma_1}=
\max\left\{\frac{7+\lambda}{9(1+\lambda)},
\frac{4-2\lambda}{3(1+\lambda)},
\frac{19-5\lambda}{18(1+\lambda)}\right\} + \epsilon_4.
$$
One can check that all terms in the maximum are at most
$\frac{4-2\lambda}{3(1+\lambda)}$ as soon as $\lambda\le \frac57$.

We now exhaust all the cases. By letting $\epsilon\to 0$, we finally
conclude that $\dim S_{3,3}^{2,\eta,\delta}(I,\lambda) \le
\frac{4-2\lambda}{3(1+\lambda)}$. This confirms Theorem~\ref{th5}.

%

\section{Theorem \ref{th2} for arbitrary $n$: dealing with resultants}

Here we prove Theorem~\ref{th2}. Split the set
$A_n^\eta(I,\lambda,k)$ into two subsets:
$A_n^{1,\eta}(I,\lambda,k)$ consists of all $\va$ such that $P_\va$
is an irreducible polynomial of degree $n$; and
$A_n^{2,\eta}(I,\lambda,k)$ consists of all remaining vectors. The
sets $J_n^\eta(I,\lambda, k)$ and $D_n^\eta(I,\lambda)$ split into
two corresponding subsets as well.

For all vectors $\va\in A_n^{2,\eta}(I,\lambda,k)$ the polynomials
$P_\va$ can be written as $P_\va = P_1P_2$ where $\deg P_1$ and
$\deg P_2$ are at most $n-1$. Then proceeding in the same way as for
polynomials with zero discriminant in the case $n=3$, we derive
$$
D^{2,\eta}_n(I,\lambda) \subset D_{n-1}\left(I,\frac{1+\lambda}{\lambda}-1\right).
$$
The Jarnik-Besicovich theorem then implies
$$
\dim D_n^{2,\eta}(I,\lambda) \le \frac{n\lambda}{1+\lambda}
$$
which is smaller than $\frac{2-(n-1)\lambda}{1+\lambda}$ for all $\lambda\le
\frac{2}{2n-1}$.

Consider now an arbitrary $\va\in A_n^{1,\eta}(I,\lambda,k)$. Recall
that for $x\in\RR$ the values $\kappa_i(x)$ are defined as $|x-x_i|
= Q^{-\kappa_i}$, where $x_i$ are the roots of $P_\va$ and
$\kappa_1\ge \kappa_2\ge\ldots\ge \kappa_n$. Let $x$ be the centre
of the interval $J^\eta(\va,k)$ and $x_0$ be the real part of $x_1$.
Notice that for all $i\in\{2,\ldots, n\}$ and $y$ between $x$ and
$x_0$ one has $|y - x_i|\le 2|x-x_i|$ and $|y-x_1|\le |x-x_1|$.
Therefore, by increasing the upper bound~\eqref{eq13} of
$|P_\va(x)|$ in the definition of $A_n^\eta(I,\lambda,k)$ by an
absolute constant, we can assume that $x_0\in J^{\eta}(\va,k)$.

Split each set $A_n^{1,\eta}(I,\lambda,k)$ into finitely many
subsets $A_n^{\eta}(I,\lambda,k,\vkappa)$ where $\vkappa =
(\kappa_0, \ldots, \kappa_n)$ such that one has
$$
\kappa_i-\epsilon <\kappa_i(x_0)\le \kappa_i.
$$

If $x_1$ is real, i.e. $x_0=x_1$ then for any $2\le i\le n$ one has
$|x_1-x_i|= Q^{-\kappa_i(x_0)}\gg Q^{-\kappa_i}$. The inequality~\eqref{eq9}
then implies
$$
a_nQ^{-\frac{1+\lambda}{2} + \eta}\gg  |P'_\va(x_1)| \gg a_n Q^{-\sum_{i=2}^n \kappa_i}
$$
or in other words
\begin{equation}\label{eq21}
\sum_{i=2}^n \kappa_i \gtrsim \frac{1+\lambda}{2} - \eta.
\end{equation}
If $x_1$ is not real, then $x_1$ and $x_2$ are complex conjugates and
$\kappa_1(x_0)=\kappa_2(x_0)$, i.e. we must have $\kappa_1=\kappa_2$. Then
the inequalities
$$
a_n Q^{-1-\lambda+2\eta} \gg |P_\va(x_0)| \gg a_n Q^{-\sum_{i=1}^n \kappa_i}
$$
imply that
$$
2\kappa_2 + \kappa_3+\ldots + \kappa_n \ge 1+\lambda - 2\eta.
$$
In view of the inequalities $\kappa_i\gtrsim 0$,~\eqref{eq21}
follows again.

Consider two vectors $\va_1, \va_2\in
A^{\eta}_n(I,\lambda,k,\vkappa)$. We want to find a suitable lower
bound for $|x_0(\va_1) - x_0(\va_2)|$. We call the corresponding
roots of $P_{\va_1}$ by $x_1, \ldots, x_n$ and set $x_0:=
x_0(\va_1)$. Respectively, we call the corresponding roots of
$P_{\va_2}$ by $y_1,\ldots, y_2$ and set $y_0:=x_0(\va_2)$.

If $|x_0-y_0|= Q^{-\delta}\le Q^{-\kappa_2+\epsilon}$ then for all
$1\le i,j\le n$ we get $|x_i - y_j| \ll Q^{-\kappa_l+\epsilon}$
where $l:=\max \{2,i,j\}$. This together with $|x_i - y_j|\ll 1$
leads to
$$
\prod_{1\le i,j\le n} |x_i - y_j| \ll Q^{- 4\sum_{i=2}^n \kappa_i + 4n\epsilon}.
$$
Now since $P_{\va_1}$ and $P_{\va_2}$ are distinct irreducible
polynomials with integer coefficients, we must have $\res
(P_{\va_1}, P_{\va_2}) \ge 1$. This implies that $||\va_1||_\infty^n
||\va_2||_\infty^n Q^{-4\sum_{i=2}^n \kappa_i + 4n\epsilon}\gg 1$.
For $\eta\ge 0$ we use upper bound~\eqref{def_ane}:
$||\va_i||_\infty\ll Q^{\lambda-\eta+\epsilon}$, $i\in\{1,2\}$,
and~\eqref{eq21} to derive
$$
0\lesssim
2n(\lambda-\eta+\epsilon)-4\left(\frac{1+\lambda}{2}-\eta\right) +
4n\epsilon\le 2(n-1)\lambda - 2 + \epsilon_5.
$$
For $\eta<0$ by~\eqref{def_ane}, we have $||\va_i||_\infty \ll
Q^{\lambda}$, hence the same inequality can be achieved. Finally, we
notice that for $\lambda<\frac{1}{n-1}$ and $\epsilon_5$ small
enough that inequality is impossible and hence we get a
contradiction.

We derive that $|x_0-y_0| = Q^{-\delta}\ge Q^{-\kappa_2+\epsilon}$. In this
case we have
$$
\max\{|x_0-y_1|, |x_0-y_2|\}\ll Q^{-\delta}.
$$

We compute for $i\in\{1,2\}$
\begin{equation}\label{eq23}
|P_{\va_1}(y_i)| = \left| \sum_{j=0}^n \frac{(y_i-x_0)^j}{j!}
P^{(j)}_{\va_1}(x_0)\right| \stackrel{\eqref{eq9}}\ll ||\va_1||_\infty
\left(\sum_{j=0}^n \min\{Q^{-j\delta}, Q^{-j\delta
-\frac{2-j}{2}(1+\lambda) + j\eta}\}\right).
\end{equation}
Notice that for $\delta\ge \frac{1+\lambda}{2}+\eta$ all the terms
on the right hand side are not bigger than $||\va_1||_\infty
Q^{-1-\lambda}$. Then we compute
$$
1\le |\res(P_{\va_1},P_{\va_2})| \ll
||\va_1||_\infty^{n-2}||\va_2||_\infty^n
|P_{\va_1}(y_1)||P_{\va_1}(y_2)|\ll Q^{2n\lambda - 2-2\lambda}
$$
Notice that the last inequality is not possible for $\lambda <
1/(n-1)$. We conclude that one must have $\delta<
\frac{1+\lambda}{2}+\eta$. Under this condition, the second term in
the minimum~\eqref{eq23} increases as a function of $j$, while the
first term always decreases.

Recall that by~\eqref{eq32} and~\eqref{eq31}, the value $\eta$ lies
in the range: $\frac{(1-n)(\lambda+1)}{2(n+1)} \lesssim
\eta<\frac{\lambda+1}{2}$. We split  this interval into smaller
segments. Namely, fix $1\le j\le n$ and consider the case
\begin{equation}\label{eta_class}
\frac{(1-j)(1+\lambda)}{2(j+1)}\le \eta <
\frac{(2-j)(1+\lambda)}{2j}.
\end{equation}
For $j=n$ the left inequality sign should be replaced by $\lesssim$.
Then the largest term in the sum on the right hand side
of~\eqref{eq23} is either
\begin{equation}\label{eq24}
Q^{-j\delta - \frac{2-j}{2}(1+\lambda) + j\eta}\quad\mbox{ or }\quad
Q^{-(j+1)\delta}.
\end{equation}
Notice that the first term is bigger than the second one if
$\delta\gtrsim \frac{2-j}{2}(1+\lambda) - j\eta$.

Consider the case $j=1$. It corresponds to $\eta\ge 0$ and hence
$||\va_i||_\infty \ll Q^{\lambda-\eta+\epsilon}$. If the first term
in~\eqref{eq24} is the largest one we compute
$$
1\le |\res(P_{\va_1},P_{\va_2})| \ll Q^{2n(\lambda-\eta+\epsilon) -
2\delta - 1-\lambda + 2\eta}.
$$
From here we derive
$$
\delta\lesssim \frac{(2n-1)\lambda - 1 - 2(n-1)\eta +
2n\epsilon}{2}.
$$
However, this inequality is incompatible with $\delta\gtrsim
\frac{1+\lambda}{2} - \eta$ for $\lambda<\frac{1}{n-1}$, $n>2$ and
$\epsilon$ small enough. Therefore we get that the second bound
in~\eqref{eq24} should take place. Now analogous computations for
the resultant of $P_{\va_1}$ and $P_{\va_2}$ give
$$
1\ll Q^{2n(\lambda-\eta+\epsilon) - 4\delta}
\quad\Longrightarrow\quad \delta\lesssim
\frac{n(\lambda-\eta+\epsilon)}{2} \quad\Longrightarrow\quad  |x_0-y_0| \gg Q^{-\frac{n(\lambda-\eta+\epsilon)}{2}}.
$$
Since this inequality must be satisfied for all pairs
$\va_1,\va_2\in A_n^\eta(I,\lambda, k,\kappa)$, we have
$\#A_n^\eta(I,\lambda,k,\vkappa)\ll
Q^\frac{n(\lambda-\eta+\epsilon)}{2}$. Compute the $s$-Hausdorff
series that corresponds to the standard cover of
$\limsup_{k\to\infty} J_n^\eta(I,\lambda,k,\vkappa)$:
$$
\sum_{k=1}^\infty 2^{\frac{n\lambda - n\eta + n\epsilon}{2}k}
2^{-\left(\frac{1+\lambda}{2}+\eta-\epsilon\right)sk}.
$$
It converges for
$$
s> \frac{n\lambda -n\eta}{1+\lambda+2\eta} + \epsilon_6.
$$
The right hand side is maximised for $\eta=0$. Taking into account
that $\epsilon_6$ can be made arbitrarily small, we finally get that
\begin{equation}\label{eq22}
\dim \limsup_{k\to\infty} J_n^\eta(I,\lambda,k,\vkappa)\le
\frac{n\lambda}{1+\lambda}.
\end{equation}
This value is smaller than $\frac{2-(n-1)\lambda}{1+\lambda}$ for
$\lambda<\frac{2}{2n-1}$.

Suppose now that $\eta$ satisfies~\eqref{eta_class} for $j>1$. This
automatically means that $\eta<0$, hence $||\va_i||_\infty\ll
Q^\lambda$. We first assume that the first inequality
in~\eqref{eq24} is satisfied.

Estimating the resultant in the same way as before leads to
$$
1\ll Q^{2n\lambda - 2j\delta - (2-j)(1+\lambda)+2j\eta}
\quad\Longrightarrow\quad \delta\lesssim \delta_0 := \frac{(2n-2)\lambda -2
+j(1+\lambda) + 2j\eta}{2j}.
$$
As in the previous case, we get $\#A_n^{\eta}(I,\lambda, k,\kappa) \ll
Q^{\delta_0}$ and the correspondent $s$-Hausdorff series for
$\limsup_{k\to\infty} J_n^\eta(I,\lambda,k,\vkappa)$ is
$$
\sum_{k=1}^\infty 2^{\frac{(2n-2)\lambda - 2 +j(1+\lambda) +
2j\eta}{2j}k} 2^{-\left(\frac{1+\lambda}{2}+\eta-\epsilon\right)sk}.
$$
which converges as soon as
$$
s> \frac{(2n-2)\lambda - 2 +
j(1+\lambda)+2j\eta}{j(1+\lambda+2\eta)}+\epsilon_7.
$$
One can check that for $\lambda<\frac{1}{n-1}$, the last expression
monotonically increases with $\eta$, hence it attains its maximal
value for $\eta = \frac{(2-j)(1+\lambda)}{2j}$. Substituting this
into the above inequality gives $s > \frac{n\lambda}{1+\lambda} +
\epsilon_7$ which leads to~\eqref{eq22}.

Finally, assume that the second bound in~\eqref{eq24} is valid. Then the same
computations lead to
$$
1\ll Q^{2n\lambda - 2(j+1)\delta}\quad\Longrightarrow\quad
\delta\lesssim \frac{n\lambda}{j+1}.
$$
The correspondent $s$-Hausdorff series for $\limsup_{k\to\infty}
\#A_n^\eta(I,\lambda,k,\vkappa)$ converges for
$$
s > \frac{n\lambda}{2(j+1)(1+\lambda+2\eta)}+\epsilon_8.
$$
This value monotonically decreases with $\eta$ therefore it attains it
maximum for $\eta = \frac{(1-j)(1+\lambda)}{2(j+1)}$ which implies the same
lower bound on $s$ as in the previous case. Again, we derive~\eqref{eq22}.

We now exhausted all possible subsets of $D_n^\eta(I,\lambda)$. In each case,
for $\lambda\le \frac{2}{2n-1}$ the Hausdorff dimension of those sets is at
most $\frac{2-(n-1)\lambda}{1+\lambda}$. Hence, Theorem~\ref{th2} is
verified.

\bigskip {\bf Acknowledgement}: I am very thankful to Johannes
Schleischitz for many conversations that helped to produce this
paper. Those conversations were made possible thank to the IVP
program of the Sydney Mathematical Research Institute. I am grateful
to them as well. Finally, I am grateful to the School of Mathematics
and Statistics at the University of Sydney for their constant
support.

%

\bigskip
\noindent Dzmitry Badziahin\\ \noindent The University of Sydney\\
\noindent Camperdown 2006, NSW (Australia)\\
\noindent {\tt dzmitry.badziahin@sydney.edu.au}

\end{document}